\documentclass[12pt]{amsart}

\usepackage{amsfonts, amstext, amsmath, amsthm, amscd, amssymb, enumerate, url}
\usepackage{tikz-cd}
\usepackage{xcolor}
\usepackage{svg}

\setlength{\topmargin}{0.5cm}
\setlength{\oddsidemargin}{-0.2cm}
\setlength{\evensidemargin}{-0.2cm}
\textheight = 22cm  
\textwidth = 16.2cm
\usepackage[parfill]{parskip}

\usepackage[colorlinks,citecolor=cyan,linkcolor=magenta]{hyperref}
\usepackage[nameinlink]{cleveref}

\newtheorem{thm}{Theorem}[section]
\newtheorem{cor}[thm]{Corollary}
\newtheorem{lemma}[thm]{Lemma}
\newtheorem{prop}[thm]{Proposition}

\newtheorem*{thm:boundveertet}{\Cref{thm:boundveertet}}
\newtheorem*{thm:vtpAflow}{\Cref{thm:vtpAflow}}

\theoremstyle{definition}
\newtheorem{defn}[thm]{Definition}
\newtheorem{rmk}[thm]{Remark}

\numberwithin{equation}{section}

\renewcommand{\epsilon}{\varepsilon}

\newcommand{\red}{\mathrm{red}}

\begin{document}

\title[Dynamics of veering triangulations]{Dynamics of veering triangulations: infinitesimal components of their flow graphs and applications}

\author{Ian Agol}
\address{University of California, Berkeley \\
    970 Evans Hall \#3840 \\
    Berkeley, CA 94720-3840}
\email{ianagol@math.berkeley.edu}

\author{Chi Cheuk Tsang}
\address{University of California, Berkeley \\
    970 Evans Hall \#3840 \\
    Berkeley, CA 94720-3840}
\email{chicheuk@math.berkeley.edu}

\maketitle

\begin{abstract}
We study the strongly connected components of the flow graph associated to a veering triangulation, and show that the infinitesimal components must be of a certain form, which have to do with subsets of the triangulation which we call `walls'. We show two applications of this knowledge: (1) a fix of a proof in the original paper by the first author which introduced veering triangulations; and (2) an alternate proof that veering triangulations induce pseudo-Anosov flows without perfect fits, which was initially proved by Schleimer and Segerman. 
\end{abstract}


\section{Introduction} \label{sec:intro}
Veering triangulations were introduced by the first author in \cite{Ago11} to study mapping tori of pseudo-Anosov homeomorphisms. In that setting, these are ideal triangulations of the punctured mapping tori that encode a folding sequence of train tracks associated to the pseudo-Anosov homeomorphism. In \cite{Ago11}, veering triangulations were used to prove a quantitative refinement of a result obtained initially by Farb, Leininger, and Margalit (\cite{FLM11}), which states that given a bound on the normalized dilatation of the pseudo-Anosov monodromy, there are only finitely many homeomorphism types of such punctured mapping tori. However, the proof presented in \cite{Ago11} contained a gap: an estimate by Ham and Song (\cite{HS07}) was applied to a matrix which records how weights on the branches of the train tracks distribute under the folding moves, but the estimate only applies to irreducible matrices, and there exists examples for which this fails to be the case. 

Meanwhile, since \cite{Ago11} appeared, the study of veering triangulations has developed in many other directions, see for example \cite{Gue16}, \cite{HIS16}, \cite{HRST11}, \cite{Lan18}. In particular, Gu\'eritaud reconstructed the veering triangulations for punctured mapping tori in \cite{Gue16} in terms of the pseudo-Anosov suspension flow. Among other things, this shows that the veering triangulation is canonically associated to each fibered face of the Thurston unit ball in $H_2$, instead of individual fiberings in the interior of the face. Generalizing this, the first author and Gu\'{e}ritaud showed that if a 3-manifold has a pseudo-Anosov flow without perfect fits, then the manifold obtained by drilling out the singular orbits of the flow admits a veering triangulation (see \cite{LMT21}). 

Recently, Schleimer and Segerman proved the converse of this: if a 3-manifold admits a veering triangulation, then appropriate Dehn fillings of it carry pseudo-Anosov flows without perfect fits. Their construction involves isotoping the stable and unstable branched surfaces in order to form a dynamic pair in the sense of Mosher (\cite{Mos96}). The isotopy is in turn constructed by analyzing how the branched surfaces behave with respect to a canonical decomposition of the 3-manifold into veering solid tori. Details of this construction will appear in \cite{SS22}. We remark that Schleimer and Segerman's construction is in fact part of a big program showing that veering triangulations and pseudo-Anosov flows without perfect fits are in correspondence with each other. In particular, their construction is inverse to that of Agol and Gu\'eritaud in both directions, in a suitable sense. The first three parts of their program are \cite{SS18}, \cite{SS21}, and \cite{SS19}. Also see the introduction of \cite{SS19} for an outline of the whole program. 

This paper was born out of an attempt to reprove Schleimer and Segerman's construction using more direct means. There is a standard way of constructing pseudo-Anosov flows on a 3-manifold starting from an embedded graph with desirable properties. See the concept of templates, introduced by Birman and Williams in \cite{BW83a} and \cite{BW83b} (under the name of `knot-holders'), and the concept of dynamic pairs, introduced by Mosher in \cite{Mos96}. The general strategy is to thicken up the graph using flow boxes, then collapse along the complement to obtain a flow on the whole 3-manifold. Given a veering triangulation, there is a natural candidate to apply this strategy to: the flow graph. (See \Cref{sec:veertri} for definitions of objects associated to veering triangulations.) However, the fact that the flow graph might not be strongly connected presents difficulties in both constructing the flow and analyzing it.

This turns out to be exactly the same issue underlying the gap in \cite{Ago11}. In this paper, we explain a way of addressing this. We show that the problematic infinitesimal components must arise in a certain form, which have to do with subsets of the veering triangulation which we call `walls'. By throwing out these components, we obtain the reduced flow graph, which share many of the same features as the flow graph.

By analyzing this reduced flow graph, we are able to tackle the proof in \cite{Ago11}. The explicit quantitative bound we get is the following:
\begin{thm:boundveertet}

If $M$ is the punctured mapping torus of a pseudo-Anosov homeomorphism $\phi: S_{g,n} \to S_{g,n}$, where the normalized dilatation $\lambda(\phi)^{2g-2+\frac{2}{3}n} \leq P$, then $M$ has a veering triangulation with at most $\frac{P^9-1}{2} (\frac{2 \log P^9}{\log(2 P^{-9}+1)}-1)$ tetrahedra.
\end{thm:boundveertet}

Note that the bound we obtain is worse than that stated in \cite{Ago11} by an exponent of $2+\epsilon$, but there might be room for improvement.

To reprove Schleimer and Segerman's construction, we apply the general strategy outlined above to the reduced flow graph: we thicken up the graph by flow boxes, and collapse along its complement, with the help of the unstable branched surface. When the filling slopes satisfy a necessary condition, it is not difficult to see that the resulting flow is pseudo-Anosov. Furthermore, we can leverage the structure of the unstable branched surface to show that the pseudo-Anosov flow obtained has no perfect fits relative to the `filled' orbits. The precise result is as below, see \Cref{sec:veertri,sec:pAflow} for relevant definitions.

\begin{thm:vtpAflow}
Suppose $M$ admits a veering triangulation. Let $l=(l_i)$ denote the collection of ladderpole curves on each boundary component. Then $M(s)$ admits a transitive pseudo-Anosov flow $\phi$ if $|\langle s,l \rangle| \geq 2$. 

Furthermore, there are closed orbits $c_i$ isotopic to the cores of the filling solid tori, such that each $c_i$ is $|\langle s_i,l_i \rangle|$-pronged, and $\phi$ is without perfect fits relative to $\{c_i\}$.
\end{thm:vtpAflow}

We remark that there are 2 notions of pseudo-Anosov flows common in the 3-manifold topology literature: topological pseudo-Anosov flows and smooth pseudo-Anosov flows. \Cref{thm:vtpAflow} holds for both notions.

We also remark that, in contrast to the work by Schleimer and Segerman, it is not clear whether our construction provides a correspondence between veering triangulations and pseudo-Anosov flows. We pose this as a question more carefully in \Cref{sec:questions}.

The results we discuss in this paper run parallel to those in \cite{LMT21}. The walls in this paper are closely related to what are called AB regions in \cite{LMT21}. In the setting of \cite{LMT21}, one starts with a pseudo-Anosov flow without perfect fits and builds a veering triangulation transverse to the flow. Then it turns out that the flow graph encodes the orbits of the flow, with AB cycles governing the extent of over- and under-counting. In this paper we go in the opposite direction, starting with a veering triangulation and constructing a pseudo-Anosov flow without perfect fits. We end up with a similar conclusion: the flow graph encodes the orbits of the flow, with infinitesimal cycles of walls governing the extent of over- and under-counting. We will explain these connections between this paper and \cite{LMT21} where relevant.

Here is an outline of this paper. In \Cref{sec:veertri}, we review the notion of veering triangulations and related constructions. In \Cref{sec:infcomp}, we study the infinitesimal components of the flow graph and show that they must arise from walls. We also show how to define the reduced flow graph by throwing away these infinitesimal components. In \Cref{sec:finiteness}, we analyze the reduced flow graph of layered veering triangulations to fix the gap in \cite{Ago11} and prove \Cref{thm:boundveertet}. In \Cref{sec:pAflow}, we reprove Schleimer and Segerman's result (\Cref{thm:vtpAflow}) using the reduced flow graph. Finally in \Cref{sec:questions}, we discuss some questions coming out this paper. We remark that \Cref{sec:finiteness,sec:pAflow} are independent of each other, and the reader can jump to either of them after reading \Cref{sec:infcomp}.

{\bf Acknowledgments.} The first author discovered the gap in \cite{Ago11} with the help of the veering triangulation census. We thank Andreas Giannopolous, Saul Schleimer, and Henry Segerman for making the data in the census available. We also thank Saul Schleimer and Henry Segerman for sharing with us an early version of \cite{SS22}. We thank Michael Landry for the support and encouragement throughout this project. We thank Rafael Potrie on MathOverflow for pointing us in the direction of Mario Shannon's thesis, and Mario Shannon for guiding us through the material in \Cref{subsec:smooth}. We thank Anna Parlak and Samuel Taylor for comments on an early version of this paper. We thank the anonymous referee for their suggestions in improving the exposition and clarity of the paper.

The first author was supported by MSRI and the Simonyi Professorship at IAS during part of this project. Both authors are partially supported by a grant from the Simons Foundation \#376200.

{\bf Notational conventions.} Throughout this paper, 
\begin{itemize}
    \item $\overline{M}$ will be an oriented compact 3-manifold with torus boundary components, while $M$ will be the interior of such a manifold. We will sometimes conflate a torus end of $M$ with the corresponding boundary component of $\overline{M}$.
    \item $X \backslash \backslash Y$ will denote the metric completion of $X \backslash Y$ with respect to the induced path metric from $X$. In addition, we will call the components of $X \backslash \backslash Y$ the complementary regions of $Y$ in $X$.
    \item $\widetilde{X}$ will mean a universal cover of $X$, unless otherwise stated.
\end{itemize}

\section{Background: veering triangulations} \label{sec:veertri}

We recall the definition of a veering triangulation.

\begin{defn} \label{defn:tautstructure}
An \textit{ideal tetrahedron} is a tetrahedon with its 4 vertices removed. The removed vertices are called the \textit{ideal vertices}. An \textit{ideal triangulation} of $M$ is a decomposition of $M$ into ideal tetrahedra glued along pairs of faces.

A \textit{taut structure} on an ideal triangulation is a labelling of the dihedral angles by $0$ or $\pi$, such that 
\begin{enumerate}
    \item Each tetrahedron has exactly two dihedral angles labelled $\pi$, and they are opposite to each other.
    \item The angle sum around each edge in the triangulation is $2\pi$.
\end{enumerate}

Intuitively this means that there is a degenerate geometric structure on the triangulation where every tetrahedron is flat.

A \textit{transverse taut structure} is a taut structure along with a coorientation on each face, such that for any edge labelled $0$ in a tetrahedron, exactly one of the faces adjacent to it is cooriented out of the tetrahedron.

A \textit{transverse taut ideal triangulation} is an ideal triangulation with a transverse taut structure.
\end{defn}

We will always take the convention that the face coorientations are pointing upwards in our figures and descriptions.

\begin{defn} \label{defn:veertri}
A \textit{veering structure} on a transverse taut ideal triangulation is a coloring of the edges by red or blue, so that looking at each flat tetrahedron from above, when we go through the 4 outer edges counter-clockwise, starting from an endpoint of the inner edge in front, the edges are colored red, blue, red, blue in that order. We call such a tetrahedron a \textit{veering tetrahedron}.

A \textit{veering triangulation} is a transverse taut ideal triangulation with a veering structure.
\end{defn}

\Cref{fig:veertet} shows a veering tetrahedron in a veering triangulation.

\begin{figure} 
    \centering
    \fontsize{14pt}{14pt}\selectfont
    \resizebox{!}{4cm}{
\begingroup%
  \makeatletter%
  \providecommand\color[2][]{%
    \errmessage{(Inkscape) Color is used for the text in Inkscape, but the package 'color.sty' is not loaded}%
    \renewcommand\color[2][]{}%
  }%
  \providecommand\transparent[1]{%
    \errmessage{(Inkscape) Transparency is used (non-zero) for the text in Inkscape, but the package 'transparent.sty' is not loaded}%
    \renewcommand\transparent[1]{}%
  }%
  \providecommand\rotatebox[2]{#2}%
  \newcommand*\fsize{\dimexpr\f@size pt\relax}%
  \newcommand*\lineheight[1]{\fontsize{\fsize}{#1\fsize}\selectfont}%
  \ifx\svgwidth\undefined%
    \setlength{\unitlength}{325.82965819bp}%
    \ifx\svgscale\undefined%
      \relax%
    \else%
      \setlength{\unitlength}{\unitlength * \real{\svgscale}}%
    \fi%
  \else%
    \setlength{\unitlength}{\svgwidth}%
  \fi%
  \global\let\svgwidth\undefined%
  \global\let\svgscale\undefined%
  \makeatother%
  \begin{picture}(1,0.40856788)%
    \lineheight{1}%
    \setlength\tabcolsep{0pt}%
    \put(0,0){\includegraphics[width=\unitlength,page=1]{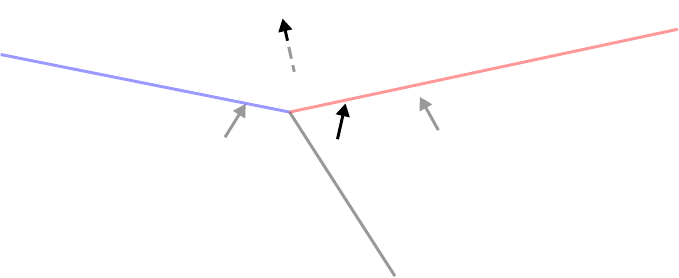}}%
    \put(0.45702311,0.36605622){\color[rgb]{0,0,0}\makebox(0,0)[lt]{\lineheight{1.25}\smash{\begin{tabular}[t]{l}$\pi$\end{tabular}}}}%
    \put(0.50761937,0.13701584){\color[rgb]{0,0,0}\transparent{0.40000001}\makebox(0,0)[lt]{\lineheight{1.25}\smash{\begin{tabular}[t]{l}$\pi$\end{tabular}}}}%
    \put(0.76168485,0.11769163){\color[rgb]{0,0,1}\makebox(0,0)[lt]{\lineheight{1.25}\smash{\begin{tabular}[t]{l}$0$\end{tabular}}}}%
    \put(0.31075696,0.09433859){\color[rgb]{1,0,0}\makebox(0,0)[lt]{\lineheight{1.25}\smash{\begin{tabular}[t]{l}$0$\end{tabular}}}}%
    \put(0.25420744,0.29133508){\color[rgb]{0,0,1}\transparent{0.40000001}\makebox(0,0)[lt]{\lineheight{1.25}\smash{\begin{tabular}[t]{l}$0$\end{tabular}}}}%
    \put(0.62556736,0.30455855){\color[rgb]{1,0,0}\transparent{0.40000001}\makebox(0,0)[lt]{\lineheight{1.25}\smash{\begin{tabular}[t]{l}$0$\end{tabular}}}}%
    \put(0,0){\includegraphics[width=\unitlength,page=2]{veertet.pdf}}%
  \end{picture}%
\endgroup%
}
    \caption{A tetrahedron in a transverse veering triangulation. There are no restrictions on the colors of the top and bottom edges.} 
    \label{fig:veertet}
\end{figure}

From now on $\Delta$ will denote a veering triangulation on $M$.

\begin{rmk} \label{rmk:transverse}
Some authors call the notion we have just defined a transverse veering triangulation, and call the version of this notion without face coorientations veering triangulations instead. These two versions do not differ by much, since one can always take a double cover to make the faces coorientable. However, we will just be studying the version with face coorientations in this paper. Without the face coorientations, our main object of study, the flow graph (\Cref{defn:flowgraph}), cannot be defined.
\end{rmk}

We recall some combinatorial facts and constructions for veering triangulations.

\begin{defn} \label{defn:fantet}
A veering tetrahedron in $\Delta$ is called a \textit{toggle tetrahedron} if the colors on its top and bottom edges differ. It is called a \textit{red/blue fan tetrahedron} if both its top and bottom edges are red/blue respectively.

Note that some authors call toggle and fan tetrahedra hinge and non-hinge respectively.
\end{defn}

\begin{prop}[{\cite[Observation 2.6]{FG13}}] \label{prop:fantet}
Every edge $e$ in $\Delta$ has one tetrahedron above it, one tetrahedron below it, and two stacks of tetrahedra, in between the tetrahedra above and below, on either of its sides.

Each stack must be nonempty. Suppose $e$ is red (blue, respectively). If there is exactly one tetrahedron in one stack, then that tetrahedron is a red (blue, respectively) fan tetrahedron. If there are $n>1$ tetrahedron in one stack, then going from top to bottom in that stack, the tetrahedra are: one toggle tetrahedron, $n-2$ blue (red, respectively) fan tetrahedra, and one toggle tetrahedron. 
\end{prop}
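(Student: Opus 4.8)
The plan is to build the local picture around $e$ in stages: first use only the taut structure to produce the tetrahedra above and below $e$ together with the two stacks, then use the transverse structure to see that each stack is a genuine monotone stack, and finally feed in the red--blue colouring to establish non-emptiness and the fan/toggle pattern.

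First I would record the purely taut consequences. The tetrahedra meeting $e$ are arranged in a cyclic sequence, glued to their neighbours along the faces of the sequence containing $e$. Every dihedral angle is labelled $0$ or $\pi$ (\Cref{defn:tautstructure}), so each of these tetrahedra contributes $0$ or $\pi$ at $e$, and the angle-sum condition forces the total to be $2\pi$; hence exactly two tetrahedra in the sequence have $e$ as a $\pi$-edge. In a transverse taut tetrahedron the two faces meeting a $\pi$-edge are cooriented consistently through it (a consequence of the transverse taut condition at the four $0$-edges), from which one reads off that of these two tetrahedra one has $e$ as its bottom edge, hence lies above $e$, and the other has $e$ as its top edge, hence lies below $e$; walking around the cyclic sequence and keeping track of which side of $e$ each face lies on rules out both lying on the same side. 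Deleting these two tetrahedra leaves two arcs of tetrahedra, each having $e$ as a $0$-edge: these are the two stacks. Finally, the transverse taut condition at a $0$-edge --- exactly one of its two faces is cooriented out of the tetrahedron --- says that each stack is genuinely stacked, running monotonically from a face of the tetrahedron below $e$ up to a face of the tetrahedron above $e$.

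Next I would bring in the colouring to obtain the fan/toggle pattern, treating a red edge $e$ (the blue case being symmetric). Fix a stack $S_1,\dots,S_m$, indexed from bottom (glued to the tetrahedron below $e$) to top (glued to the tetrahedron above $e$). In each $S_j$ the edge $e$ is one of the four outer edges and is red, so by the red--blue--red--blue rule (\Cref{defn:veertri}) the two outer edges of $S_j$ adjacent to it are blue. Examining the gluing of $S_j$ to $S_{j+1}$ along the face they share and requiring it to respect edge colours, one finds that it identifies the top edge of $S_j$ with the bottom edge of $S_{j+1}$; examining the gluings of $S_1$ and $S_m$ to the tetrahedra below and above $e$, whose relevant $\pi$-edge is the \emph{red} edge $e$, one finds in the same way that the bottom edge of $S_1$ and the top edge of $S_m$ are red. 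Feeding these identifications back into the red--blue--red--blue rule for each tetrahedron of the stack then forces all the remaining top and bottom edges to be blue, so that $S_2,\dots,S_{m-1}$ are blue fan tetrahedra and $S_1,S_m$ are toggle tetrahedra; when $m=1$ the single tetrahedron is at once the bottom-most and the top-most, both of its $\pi$-edges are forced to be red, and it is a red fan tetrahedron.

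The step I expect to take the most care --- and the only place where the full veering hypothesis and the orientation of $M$ are really needed --- is that no stack is empty. If a stack were empty, a bottom face $F$ of the tetrahedron above $e$ would be glued directly to a top face $F'$ of the tetrahedron below $e$, where each of $F$ and $F'$ consists of $e$ together with one red and one blue outer edge of the corresponding $\pi$-tetrahedron. Matching colours across the gluing forces a definite identification of the three vertices of $F$ with those of $F'$; but the red--blue--red--blue rule is a statement about the \emph{oriented} flat tetrahedron, and tracing through it shows that the forced identification is orientation-preserving on the triangle, contradicting the fact that faces of $\Delta$ are glued by orientation-reversing maps. Hence both stacks are non-empty. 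This orientation bookkeeping --- and, to a lesser extent, the colour bookkeeping behind the fan pattern --- is routine once conventions for ``top/bottom'', the coorientations, and ``counter-clockwise'' are fixed, but it is the technical core of the argument, and is cleanest to carry out by drawing the developed fan in the style of \Cref{fig:veertet} rather than purely symbolically.
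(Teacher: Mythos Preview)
The paper does not prove this proposition; it is quoted from \cite[Observation 2.6]{FG13} with no argument given, so there is no in-paper proof to compare your proposal against.

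Your outline is the standard route and is essentially sound. The taut part of the argument (two $\pi$-tetrahedra, one above and one below, with two monotone side stacks) is correct, and your treatment of the boundary tetrahedra $S_1,S_m$ and of non-emptiness via orientation is the right idea. One step does deserve more care: you assert that colour-matching across the shared face of $S_j$ and $S_{j+1}$ identifies the top edge of $S_j$ with the bottom edge of $S_{j+1}$. Colours alone do not force this. Seen from $S_j$ that face has edges $e$ (red), the top $\pi$-edge of $S_j$, and a blue side edge; seen from $S_{j+1}$ it has edges $e$ (red), the bottom $\pi$-edge of $S_{j+1}$, and a blue side edge. If both of those $\pi$-edges are blue --- which is precisely what you are trying to establish --- colour-matching is consistent with either pairing of the two non-$e$ edges, so the conclusion ``all remaining top and bottom edges are blue'' does not yet follow. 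What actually pins down the identification, and hence the intermediate colours, is the same orientation bookkeeping you invoke for non-emptiness: tracing the oriented veering picture (\Cref{fig:veertet}) for two tetrahedra sharing the face and the $0$-edge $e$ tells you which endpoint of $e$ the $\pi$-edge of each meets, and then the colour pattern drops out. By contrast, your colour argument at $S_1$ and $S_m$ \emph{does} go through unaided, because the face as seen from the adjacent $\pi$-tetrahedron has one red and one blue side edge, removing the ambiguity.
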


\begin{defn} \label{defn:branchsurf}
For each tetrahedron of $\Delta$, define a branched surface inside it by placing a quadrilateral with vertices on the top and bottom edges and the two side edges of the same color as the top edge, then adding a triangular sector for each side edge of the opposite color to the top edge, with a vertex on that side edge and attached to the quadrilateral along an arc going between the two faces adjacent to that side edge. We also require that the arcs of attachment for the two triangular sectors intersect only once on the quadrilateral. See \Cref{fig:branchsurf} top left. These branched surfaces in each tetrahedron can arranged to match up across faces, thus glue up to a branched surface in $M$, which we call the \textit{unstable branched surface} $B$.

The intersection of the unstable branched surface with the faces of $\Delta$ is called the \textit{unstable train track}. Notice that as one goes from the top faces to the bottom faces of each veering tetrahedron, the unstable train track undergoes a \textit{folding move}. See \Cref{fig:branchsurf} bottom. 

The branch locus of $B$ is a union of circles, smoothly carried by the branch locus and intersecting transversely at \textit{double points} of the branch locus. We call these circles the \textit{components} of the branch locus. Orient the circles so that they intersect the 2-skeleton of the veering triangulation negatively, i.e. they disagree with the coorientation of the faces whenever they meet. This orientation has the special property that at double points of the branch locus, it always points from the side with more sectors to the side with less sectors.
\end{defn}

Note that the unstable branched surface, when considered as a cell complex by taking the 0-cells to be the double points of its branch locus, the 1-skeleton to be the branch locus, and the 2-cells to be the sectors, is dual to the veering triangulation. As such, it makes sense for us to talk about, for example, the sector of $B$ dual to a given edge of $\Delta$.

Also, notice that each sector of $B$ is of the form of a diamond. Each sector has two upper edges and at least two lower edges, at least one on either side; one top vertex, two side vertices, some vertices between the lower edges on either side, and one bottom vertex. We refer to \cite[Section 6.13]{SS19} for details.

\begin{figure} 
    \centering
    \resizebox{!}{8cm}{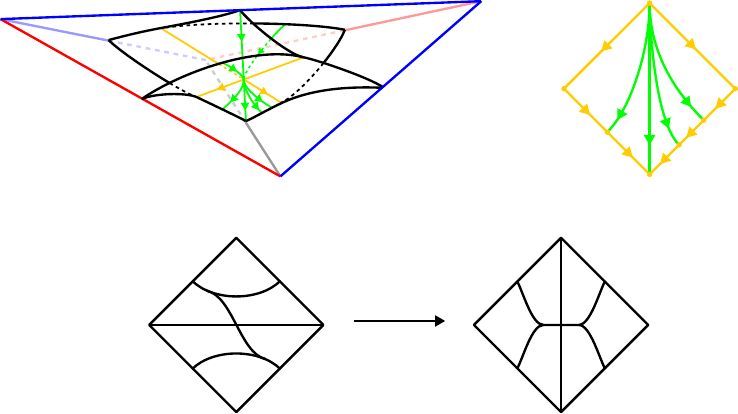}
    \caption{The unstable branched surface meets the faces of a veering tetrahedron in the unstable train track, which undergoes a folding move as one goes from the top faces to the bottom faces. The flow graph can be embedded in the unstable branched surface.}
    \label{fig:branchsurf}
\end{figure}

\begin{defn} \label{defn:flowgraph}
Let $\Delta$ be a veering triangulation. The \textit{flow graph} $\Phi$ is a directed graph with vertex set $V(\Phi)$ equals to the set of edges of $\Delta$, and edge set $E(\Phi)$ defined by adding 3 edges per tetrahedron, going from the top edge and the two side edges of opposite color to the top edge, into the bottom edge.
\end{defn}

We note that the flow graph was first defined in \cite{LMT20}. However the reader is cautioned that there the flow graph is oriented in the opposite direction compared to here.

The flow graph $\Phi$ can be embedded in the unstable branched surface $B$ in the following way. Place each vertex of $\Phi$ at the top vertex of the sector dual to the corresponding edge of $\Delta$, and place the edges of $\Phi$ exiting that vertex in the interior of the sector, as shown in \Cref{fig:branchsurf} top right. See also \cite[Section 4.3]{LMT20}. The flow graph inherits the structure of a (non-generic) oriented train track from this embedding, by making the edges meeting a vertex tangent to a vertical line. We will consider the flow graph to be a subset of $B$ in this way from now on.

The ideal triangulation $\Delta$ on $M$ induces a triangulation $\partial \Delta$ on the torus boundary components of $\overline{M}$, by considering them as links of the ideal vertices. The branch locus of the unstable branched surface inside each tetrahedron opens up towards two opposite ideal vertices, hence each side determines an oriented interval between two edges of a face in $\partial \Delta$. Join these intervals end-to-end. Since each face in $\partial \Delta$ contains at most one such interval, the paths must close up to form oriented parallel loops. Also note that since every edge in $\Delta$ is the top edge of some tetrahedron, each boundary component of $\overline{M}$ will receive at least one loop.

\begin{defn}[{\cite{FG13}}] \label{defn:ladderpole}
The homotopy class of the union of these oriented parallel loops on each boundary component of $\overline{M}$ is called the \textit{ladderpole curve} on the boundary component. The slopes they determine is called the \textit{ladderpole slope} on the boundary component.
\end{defn}

We end this section by analyzing the complementary regions of $B$ in $M$ and the complementary regions of $\Phi$ in $B$.

\begin{prop} \label{prop:complbranchsurf}
The components of $M \backslash \backslash B$ are (once-punctured cusped polygons)$\times S^1$, where each cusp$\times S^1$ represents the ladderpole slope on the corresponding boundary component of $\overline{M}$, and the sum over all cusp circles in each component represents the ladderpole curve on the corresponding boundary component of $\overline{M}$.
\end{prop}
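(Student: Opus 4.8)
The plan is to cut $M\backslash\backslash B$ apart tetrahedron by tetrahedron, reassemble the pieces along the faces of $\Delta$, and then read off the cusp data from \Cref{defn:ladderpole}.

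First I would analyze a single veering tetrahedron $t$. By \Cref{defn:branchsurf}, $B\cap t$ is the central quadrilateral $Q$ — meeting each of the four faces of $t$ in one arc, with its four vertices in the interiors of the top edge, the bottom edge and the two side edges colored like the top edge — together with the two triangular sectors attached to $Q$ across the remaining two side edges. All of this is bounded away from the four ideal vertices of $t$, and since $\Delta$ has finitely many tetrahedra, $B$ is therefore bounded away from every cusp of $M$. Now $Q$ separates $t$ into two halves and each triangular sector separates its half into two pieces, one accumulating on each endpoint of the corresponding side edge; so $t\backslash\backslash(B\cap t)$ has exactly four components, one accumulating on each ideal vertex $v$ of $t$. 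Since $B$ misses a neighborhood of $v$, such a component is a prism, combinatorially $\sigma_v\times I$, where $\sigma_v$ is the truncation triangle of $t$ at $v$ (a triangle of $\partial\Delta$): the base $\sigma_v\times\{1\}$ is this triangle on $\partial\overline M$, the interval fibers run transversely to the faces of $\Delta$ in toward the $2$-skeleton until they hit $B$ (along the base $\sigma_v\times\{0\}$, which lies on sectors of $B$), the rectangular sides $(\partial\sigma_v)\times I$ lie on the three faces of $t$ through $v$, and the cusp corners of the prism run along arcs of the branch locus of $B$.

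Next I would reglue the tetrahedra along the faces of $\Delta$. A prism accumulating on $v$ is glued, across a face $f$ of $t$ through $v$, to the prism of the adjacent tetrahedron accumulating on $v$, along the rectangle on $f$; the combinatorics of these gluings is exactly that by which the triangles $\sigma_v$ tile the cusp torus $T$ containing $v$ under the edge identifications of $\partial\Delta$. Since $T$ is connected, all prisms accumulating on $T$ assemble into one component $U_T$, and as every prism accumulates on some cusp this yields $M\backslash\backslash B=\bigsqcup_T U_T$, one component per boundary component of $\overline M$. For the homeomorphism type, work with the compact model $\overline{U_T}$ (a component of $\overline M\backslash\backslash B$; compact since $B$ avoids $\partial\overline M$), which contains $T$ as a boundary torus. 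The plan is to show $\overline{U_T}$ deformation retracts onto $T$ — pushing each prism onto its base in $T$, with the retractions arranged to agree along the gluing rectangles — whence $\pi_1\overline{U_T}\cong\ZZ^2$ and, by the classification of compact $3$-manifolds with this fundamental group, $\overline{U_T}\cong T^2\times I\cong(\text{annulus})\times S^1$. Therefore $U_T=\overline{U_T}\setminus T\cong(\text{once-punctured disk})\times S^1$, and tracking the cusp corners of the prisms (which lie on the vertical boundary of a regular neighborhood of $B$) shows they cut the non-puncture boundary circle of the disk factor into arcs, exhibiting $U_T$ as a once-punctured cusped polygon times $S^1$.

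Finally I would identify the cusp circles of $U_T$ with the ladderpole loops. By the discussion preceding \Cref{defn:ladderpole}, inside each tetrahedron the branch locus of $B$ opens up toward two opposite ideal vertices and cuts out, on the corresponding truncation triangles, an oriented interval between two edges of a face of $\partial\Delta$; joining these end to end produces precisely the oriented parallel ladderpole loops. As the cusp corners of the prisms from the first step run along exactly these arcs, the cusp circles of $U_T$ are these ladderpole loops on $T$. Hence each cusp $\times S^1$ lies in the ladderpole slope on $T$, and — each being an $S^1$-fiber of $U_T\cong(\text{once-punctured cusped polygon})\times S^1$, that is, a single ladderpole loop on $T$ — the sum of all cusp circles in $U_T$ represents the ladderpole curve on $T$.

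The main obstacle is the bookkeeping in the first two steps: confirming that all four components of $t\backslash\backslash(B\cap t)$ are honest prisms (including near the two ideal vertices the branch locus opens toward) and, especially, arranging the prism retractions onto $T$ to be globally compatible — equivalently, producing the product structure on $\overline{U_T}$ directly. I expect the cleanest route, which would also make the matching of cusp circles with ladderpole loops in the third step transparent, is to pass to the universal cover and exploit the ladder structure of $\partial\Delta$.
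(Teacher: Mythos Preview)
Your decomposition into four prisms per tetrahedron and the identification of the cusp circles with the ladderpole loops match the paper's argument. The one substantive difference is how you pass from the glued prisms to the product structure on $U_T$: you go through a deformation retraction onto $T$, then $\pi_1\cong\ZZ^2$, then invoke a classification of compact $3$-manifolds with that fundamental group. The paper instead does exactly what you flag in your last paragraph as the ``cleanest route'': it simply notes that each prism already carries a product structure (the $I$-direction running from the truncation triangle in toward $B$), and that these product structures are visibly compatible across the rectangular face gluings, so they assemble directly into a homeomorphism $U_T\cong T^2\times[0,\infty)$. This bypasses both the bookkeeping obstacle you worry about and the need to check hypotheses (irreducibility, boundary structure) for the classification step, which as stated is not quite enough on its own---$\pi_1\cong\ZZ^2$ alone does not pin down a compact $3$-manifold.
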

\begin{proof}
Let $T$ be such a component. $T$ is a neighborhood of a torus end of $M$. Inside each tetrahedron $t$, $T \cap t$ is homeomorphic to the product of $T \cap t \cap B$ with $[0,\infty)$. Hence the product structures glue up to give a homeomorphism $T \cong T^2 \times [0,\infty)$. $\partial T$ inherits the branch locus of $B$ as cusp circles, with these representing the ladderpole classes and slopes as described in the statement by definition. An identification of $\partial T$ with $T^2$ sending these cusp circles to longitudes extend to an identification of $T$ with (once-punctured cusped $n$-gon)$\times S^1$, where $n$ is the number of cusp circles. 
\end{proof}

\begin{lemma} \label{lemma:flowgraphcompl}
The components of $B \backslash \backslash \Phi$ are annuli or M\"obius bands with tongues, i.e. they can be obtained by attaching triangular sectors (`tongues') along arcs to a smooth annulus or M\"obius band and smoothing all the arcs of attachment in the same direction with respect to the core of the annulus/M\"obius band. (This terminology is borrowed from \cite{Mos96}.)

Moreover, the arcs of attachment zig-zag along the annulus/M\"obius band. More precisely, the arcs on the annulus/M\"obius band lift to $y=\pm x + 2i, i \in \mathbb{Z}$ in the universal cover $[0,1] \times \mathbb{R}$. These arcs are subintervals of the branch locus of $B$ and are oriented downwards (i.e. decreasing $y$ in the above model). See \Cref{fig:flowgraphcompl} bottom.
\end{lemma}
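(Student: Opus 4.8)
The plan is to analyze the complementary regions $B \backslash\backslash \Phi$ cell by cell, using the already-established facts that $B$ is dual to $\Delta$ (so its sectors are diamonds, one per edge of $\Delta$) and that $\Phi$ sits inside $B$ with one vertex at the top vertex of each diamond. First I would identify, inside a single sector $\sigma_e$ of $B$ dual to an edge $e$, exactly what $\sigma_e \backslash\backslash \Phi$ looks like: the three edges of $\Phi$ emanating from the top vertex of $\sigma_e$ (coming from the tetrahedron above $e$) cut the top of the diamond, and the edges of $\Phi$ entering the bottom vertices of $\sigma_e$ (into $e$, from the tetrahedra below and to the sides) cut the bottom. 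One checks that each sector is thereby divided into a central strip (bounded by pieces of $\Phi$-edges on top and bottom) plus, for each ``extra'' lower edge of the diamond beyond the first two, one triangular piece hanging off the side — these are the tongues. The triangular sectors of $\Delta$ that get glued to the quadrilateral in the definition of $B$ (Definition \ref{defn:branchsurf}) are precisely the source of these tongues.

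Next I would assemble these local pictures across the faces of $\Delta$. The central strips in adjacent sectors glue along the portions of $\Phi$-edges that they share; following this gluing amounts to following the branch locus of $B$, and I would use \Cref{prop:fantet} to control how long one travels before returning. The key point is that the union of central strips, after removing the tongues, is a smooth surface whose boundary consists of $\Phi$-edges (with the train-track smoothing from the embedding), so each component is a smooth $I$-bundle over a circle — an annulus or a Möbius band depending on orientability, which is exactly the dichotomy claimed. Here I would invoke that the flow graph is an oriented train track in $B$ to see that the two boundary circles of such a strip are genuinely the $\Phi$-side; the Möbius case is not excluded a priori and should be kept.

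For the ``zig-zag'' refinement, I would pass to the universal cover $[0,1] \times \RR$ of a component, normalizing so that the two $\Phi$-boundary arcs are the lines $x = 0$ and $x = 1$. Each tongue is attached along an arc which is a subinterval of the branch locus of $B$; I need to show these arcs lift to the lines $y = \pm x + 2i$. This comes from the local model in a single sector: the arc of attachment of a triangular sector of $B$ runs from a point on a side edge of the diamond to a single crossing point on the quadrilateral (the ``intersect only once'' condition in Definition \ref{defn:branchsurf}), so in coordinates it goes diagonally from one $\Phi$-boundary to the interior, and consecutive tongues alternate which side they hang from — this is forced by the red/blue alternation around a veering tetrahedron and by the structure of the fans in \Cref{prop:fantet}. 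The spacing $2i$ is a normalization of how far one moves along the strip between consecutive double points of the branch locus. The orientation statement (arcs oriented downwards, i.e. decreasing $y$) follows from the orientation convention on the branch locus components fixed in Definition \ref{defn:branchsurf}: they intersect the 2-skeleton negatively and point from the side with more sectors to the side with fewer, which in the strip coordinates is the decreasing-$y$ direction.

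The main obstacle I anticipate is the bookkeeping in the second and third paragraphs: correctly matching up the local sector pictures around the branch locus so that the global strip closes up, and verifying that the tongues land on alternating sides in the cover with the stated slopes $\pm 1$. This requires carefully tracking the combinatorics of toggle versus fan tetrahedra along a branch-locus component via \Cref{prop:fantet}, and making sure the train-track smoothing of $\Phi$ is compatible with the claimed product structure. The annulus-versus-Möbius dichotomy and the existence of the $I$-bundle structure should be comparatively routine once the local picture is nailed down; everything else is a matter of organizing the case analysis and choosing coordinates consistently with the orientation conventions already set up.
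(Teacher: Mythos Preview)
Your high-level strategy---analyze the complement locally and then glue---matches the paper's, but your choice of local pieces and your description of them are both off in ways that matter. The paper does \emph{not} cut $B \backslash\backslash \Phi$ sector-by-sector; it cuts tetrahedron-by-tetrahedron. This is the right choice because the picture of $B$ and $\Phi$ inside a single veering tetrahedron is completely uniform (one quadrilateral, two triangular sectors, three $\Phi$-edges, as in \Cref{fig:branchsurf}), whereas a single sector $\sigma_e$ of $B$ has a variable number of lower edges and a variable number of outgoing $\Phi$-edges, so the sector-by-sector picture is not uniform and the bookkeeping you flag as an obstacle becomes genuinely painful. Once one works per tetrahedron, the two complementary pieces per tetrahedron (one for each side edge of the same color as the top) are immediately visible, they meet the faces in train-track switches, and gluing along those switches gives the annulus/M\"obius band with tongues with essentially no further case analysis; \Cref{prop:fantet} is not needed.

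More seriously, your description of $\Phi \cap \sigma_e$ is incorrect. By the embedding specified after \Cref{defn:flowgraph}, the edges of $\Phi$ drawn in the interior of $\sigma_e$ are the \emph{outgoing} edges from the vertex for $e$ (placed at the top vertex of $\sigma_e$); there is one such edge for each lower vertex of the diamond, and this number is not three in general. The three incoming edges to $e$---the ones ``coming from the tetrahedron above $e$''---are drawn in the sectors dual to that tetrahedron's top edge and its two opposite-color side edges, not in $\sigma_e$. So your sentence ``the three edges of $\Phi$ emanating from the top vertex of $\sigma_e$ (coming from the tetrahedron above $e$) cut the top of the diamond'' conflates incoming with outgoing and places them in the wrong sector; likewise ``edges of $\Phi$ entering the bottom vertices of $\sigma_e$ (into $e$, \ldots)'' is backwards. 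With the correct local picture, $\sigma_e \backslash\backslash \Phi$ is a fan of triangles meeting at the top vertex, not a central strip with hanging tongues; the tongues only emerge once you regroup by tetrahedron. I would recommend abandoning the sector decomposition and following the per-tetrahedron analysis instead.
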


\begin{figure}
    \centering
    \resizebox{!}{12cm}{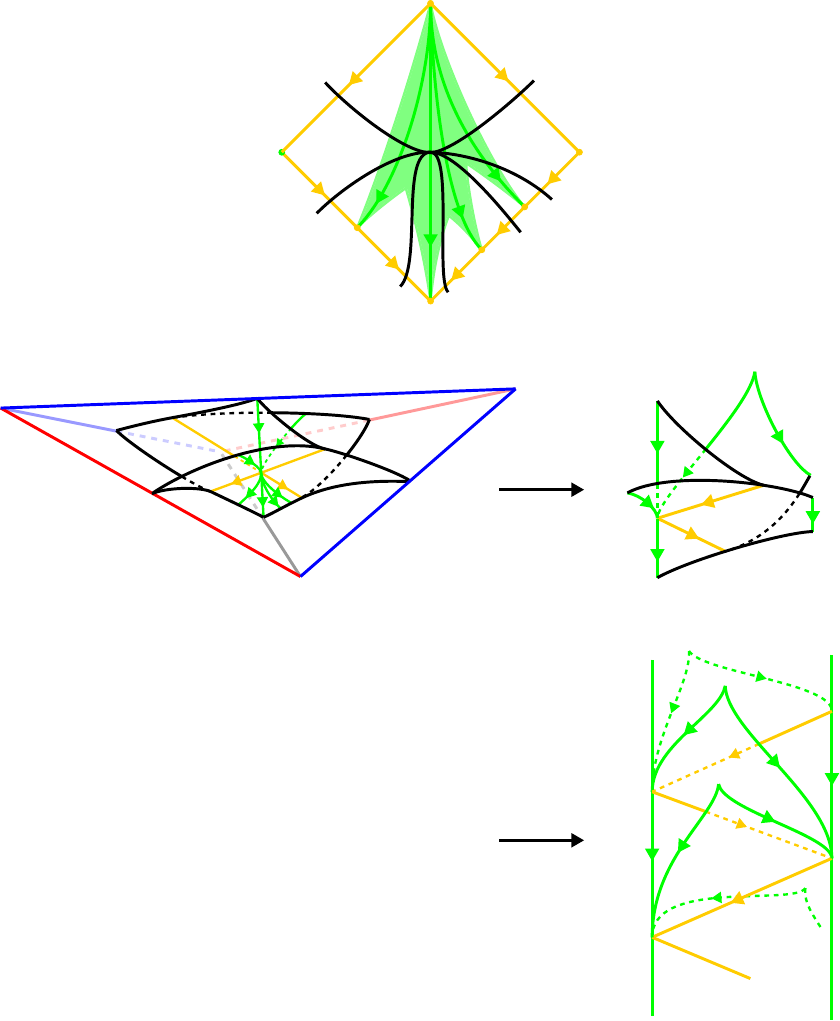}
    \caption{The complementary regions of $\Phi$ in $B$ are annuli/M\"obius bands with tongues.}
    \label{fig:flowgraphcompl}
\end{figure}

\begin{proof}
The topology of the complementary regions is unchanged if we thicken up $\Phi$ to be a regular neighborhood of itself in $B$ within each sector. Hence we can take a neighborhood of $\Phi$ in each sector of $B$ as in \Cref{fig:flowgraphcompl} top, where the black lines in the figure are where the faces of $\Delta$ meet the sector.

With this adjustment in place, it is straightforward to analyze the portion of each complementary region within a veering tetrahedron. There are two of these for each tetrahedron, corresponding to the two side edges of the same color as the top edge. We show the form of these components in \Cref{fig:flowgraphcompl} middle. In particular they intersect the faces of the tetrahedron in train track switches. 

Components of $B \backslash \backslash \Phi$ can be obtained by gluing these pieces along the train track switches on the top and bottom, and the result of the gluing must be an annulus or M\"obius band with tongues with the arcs of attachments of the tongues as described.
\end{proof}

\section{Infinitesimal components of the flow graph} \label{sec:infcomp}

\begin{defn} \label{defn:PF}
A directed graph $G$ is said to be \textit{strongly connected} if for every ordered pair of vertices $(v,w)$ there a directed edge path going from $v$ to $w$.

The \textit{adjacency matrix} of $G$ is defined to be the matrix $A \in Hom(\mathbb{R}^{V(G)}, \mathbb{R}^{V(G)})$ with entries $A_{wv}$=number of edges going from $v$ to $w$. It is easy to see that $G$ being strongly connected is equivalent to its adjacency matrix $A$ being \textit{irreducible}, i.e. for every $v,w$, the entry $(A^n)_{wv}$ is positive for some $n>0$.
\end{defn}

For reasons explained in the introduction, it would be convenient to have a strongly connected flow graph associated to a given veering triangulation. However, a brief search through the veering triangulation census (\cite{GSS}) gives examples which this property fails, for instance: \texttt{eLAkbbcdddhwqj\_2102} (which is layered) and \texttt{fLAMcaccdeejsnaxk\_20010} (which is non-layered). See \cite{GSS} for what these codes mean. For completeness, we also note that there are examples which this property holds: \texttt{cPcbbbdxm\_10} (which is layered), and \texttt{gLLAQbddeeffennmann\_011200} (which is non-layered). As we shall see later, further examples can be constructed by taking covers.

Our goal in this section is to analyze how this property can fail and explain how to prune the flow graph to make a version of this property hold.

\subsection{Strongly connected components} \label{subsec:PFcomp}

We first set up some notation for discussing strong connectivity.

\begin{defn} \label{defn:PFcomp}
Let $G$ be a directed graph, let $v,w$ be vertices of $G$. Write $v \gtrsim w$ if there is a directed edge path going from $v$ to $w$. Write $v \sim w$ if $v \gtrsim w$ and $v \lesssim w$. Note that $\sim$ is an equivalence relation, call equivalence classes of $\sim$ \textit{strongly connected components}.

Construct a directed graph $G/{\sim}$ with vertex set equal to the equivalence classes $[v]$ of $\sim$, and an edge from $[v]$ to $[w]$ if $v \gtrsim w$ and $[v] \neq [w]$. Call a strongly connected component $[v]$ of $G$ an \textit{infinitesimal component} if $[v]$ has an incoming edge in $G/{\sim}$.

A subset $V$ of $V(G)$ is called a \textit{minimal set} if it has the property that if $v \in V$ and $v \gtrsim w$ then $w \in V$.
\end{defn}

It is easy to see that a finite directed graph is a disjoint union of strongly connected graphs if and only if it has no infinitesimal components. Similarly, a finite directed graph is strongly connected if and only if it has no proper minimal sets. In this sense, infinitesimal components, or minimal sets in general, are the obstruction to strong connectivity. Hence our approach to understanding the failure of strong connectivity of a given flow graph is to analyze its infinitesimal components and minimal sets.

\subsection{Walls} \label{subsec:wall}

In this section, we introduce the concept of walls of a veering triangulation. These present one way in which infinitesimal components of flow graphs can arise. What we will show in \Cref{subsec:infcompproof} is that these account for all infinitesimal components of flow graphs. 

\begin{defn} \label{defn:wall}
Let $\Delta$ be a veering triangulation on a 3-manifold $M$. Suppose there are tetrahedra $t_{i,j}, 1 \leq i \leq w+1, j \in \mathbb{Z}/h$, where $w \geq 2$, such that for $2 \leq i \leq w$,

\begin{enumerate}
    \item The bottom edge of $t_{i,j}$ is the top edge of $t_{i,j+1}$
    \item If $i$ is odd, the bottom edge of $t_{i,j}$ are side edges of $t_{i-1,j}$ and $t_{i+1,j}$ and no other tetrahedra in $\Delta$. If $i$ is even, the bottom edge of $t_{i,j}$ are side edges of $t_{i-1,j+1}$ and $t_{i+1,j+1}$ and no other tetrahedra in $\Delta$.
\end{enumerate}

Then we call the indexed multiset $\{t_{i,j}\}$ a wall, and call $w$ the \textit{width} of the wall. We emphasize that in this definition, $t_{i,j}$ for different $i,j$ may not be different tetrahedra in $\Delta$.

Notice that for $w\geq 3$, it is possible to extract a proper subcollection of $\{t_{i,j}\}$ which will form a wall of smaller width. A wall will be called \textit{maximal} if the collection of tetrahedra $\{t_{i,j}\}$ cannot be enlarged into a wall of larger width. Similarly, one can always replace $h$ with a multiple of $h$ by renaming the tetrahedra appropriately. In the sequel we will implicitly assume that $h$ is the minimum possible value that satisfies the definition. 
\end{defn}

Suppose we have a wall $\{t_{i,j}\}$ of a veering triangulation $\Delta$. Notice that for $2 \leq i \leq w$, by \Cref{prop:fantet} and \Cref{defn:flowgraph}, the bottom edge of $t_{i,j}$ has only one outgoing edge in the flow graph $\Phi$, namely the edge going to the bottom edge of $t_{i,j+1}$. Hence there is a cycle $c_i$ in the flow graph passing through the top/bottom edges of $t_{i,j}$, for which there are edges entering $c_i$ but no edges exiting $c_i$. In other words, $c_i$ is an infinitesimal component of the flow graph for each $2 \leq i \leq w$. We call these $c_i$ the \textit{infinitesimal cycles} of the wall.

Meanwhile for each $j$ there is an edge of $\Phi$ going from the bottom edge of $t_{1,j}$ to the bottom edge of $t_{1,j+1}$, since the former is either the top edge of $t_{1,j+1}$ or a side edge of opposite color as the top edge of $t_{1,j+1}$. Let $c_1$ be the cycle formed by these edges. Similarly, there is a cycle $c_{w+1}$ passing through the bottom edges of $t_{w+1,j}$. We call $c_1$ and $c_{w+1}$ the \textit{boundary cycles} of the wall.

Here is a convenient way to visualize a wall, at least for those with width $w \geq 3$. Inside each tetrahedron, there is a unique quadrilateral carried by the unstable branched surface with boundary lying on the unstable train track. If $w \geq 3$, by applying \Cref{prop:fantet} to the bottom edges of $t_{i,j}$ for $2 \leq i \leq w$, we can see that the tetrahedra $t_{i,j}$ are all fan tetrahedra, and they are either all blue fan tetrahedra or all red fan tetrahedra. Hence the quadrilaterals that lie inside them are adjacent to each other in $B$, and their union forms a surface carried by $B$ that is the image of an annulus or M\"obius band under an immersion. The quadrilaterals form a tiling of the annulus or M\"obius band, with 4 quadrilaterals meeting at each vertex, resembling a tiled wall. In fact, this is the reason why we have chosen to call the collection of tetrahedra a wall. In particular, we note that in this case (1) in the definition holds for $i=1,w+1$ as well, and so the infinitesimal and boundary cycles $c_i$ lie within the quadrilateral tiling as vertical loops.

If $w=2$, we can try to repeat the above argument, but the picture is not as clean. By the same argument as above, $t_{1,j}$ and $t_{3,j}$ are fan tetrahedra, but the bottom edge of $t_{1,j}$ may be different from the top edge of $t_{1,j+1}$, and similarly for $t_{3,j}$. As a result, the union of quadrilaterals inside the tetrahedra do not nicely tile up an annulus or M\"obius band, but instead at some vertices some quadrilaterals from the bottom might `peel away'.

We remark that the quadrilaterals we considered above are among the ones considered in \cite{HRST11}, and this idea of looking at how quadrilaterals tile up also appeared there.

In \Cref{fig:width4wall}, we present 3 ways of illustrating a wall of width $4$ in order to aid the reader's intuition. The first picture shows the quadrilateral tiling as mentioned above (\Cref{fig:width4wall} top left). The second picture shows a layered view in terms of a folding sequence of the unstable train track on the faces of tetrahedra in the wall (\Cref{fig:width4wall} right). The third picture shows the portion of the unstable branched surface in a small neighborhood of the wall (\Cref{fig:width4wall} bottom left). The flow graph contains vertical lines as subgraphs in the picture. These are the infinitesimal and boundary cycles of the wall.

\begin{figure}
    \centering
    \fontsize{14pt}{14pt}\selectfont
    \resizebox{!}{10cm}{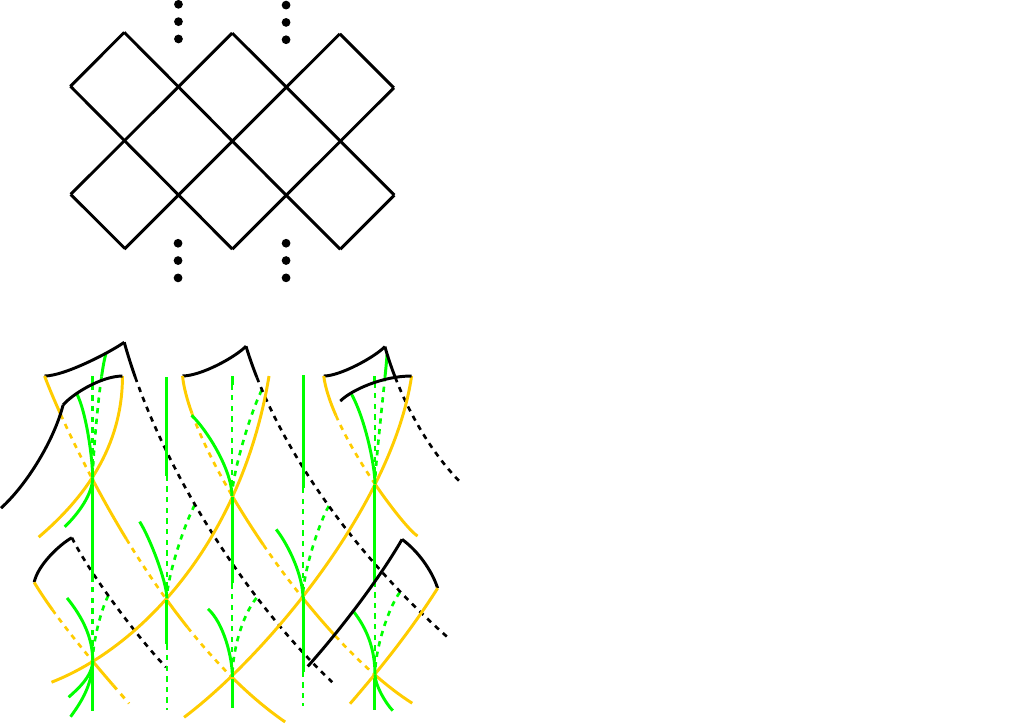}
    \caption{Illustrating a wall of width $4$ from three viewpoints. Top left: tiling with quadrilaterals. Right: folding sequence of the unstable train track. Bottom left: unstable branched surface in a small neighborhood.}
    \label{fig:width4wall}
\end{figure}

In \Cref{fig:width2wall}, we also demonstrate the same 3 viewpoints for a wall of width $2$, since as pointed out before, the combinatorics for width $2$ walls are slightly more general than that for higher width walls. 

\begin{figure}
    \centering
    \fontsize{14pt}{14pt}\selectfont
    \resizebox{!}{12cm}{
\begingroup%
  \makeatletter%
  \providecommand\color[2][]{%
    \errmessage{(Inkscape) Color is used for the text in Inkscape, but the package 'color.sty' is not loaded}%
    \renewcommand\color[2][]{}%
  }%
  \providecommand\transparent[1]{%
    \errmessage{(Inkscape) Transparency is used (non-zero) for the text in Inkscape, but the package 'transparent.sty' is not loaded}%
    \renewcommand\transparent[1]{}%
  }%
  \providecommand\rotatebox[2]{#2}%
  \newcommand*\fsize{\dimexpr\f@size pt\relax}%
  \newcommand*\lineheight[1]{\fontsize{\fsize}{#1\fsize}\selectfont}%
  \ifx\svgwidth\undefined%
    \setlength{\unitlength}{471.38673215bp}%
    \ifx\svgscale\undefined%
      \relax%
    \else%
      \setlength{\unitlength}{\unitlength * \real{\svgscale}}%
    \fi%
  \else%
    \setlength{\unitlength}{\svgwidth}%
  \fi%
  \global\let\svgwidth\undefined%
  \global\let\svgscale\undefined%
  \makeatother%
  \begin{picture}(1,0.98826209)%
    \lineheight{1}%
    \setlength\tabcolsep{0pt}%
    \put(0,0){\includegraphics[width=\unitlength,page=1]{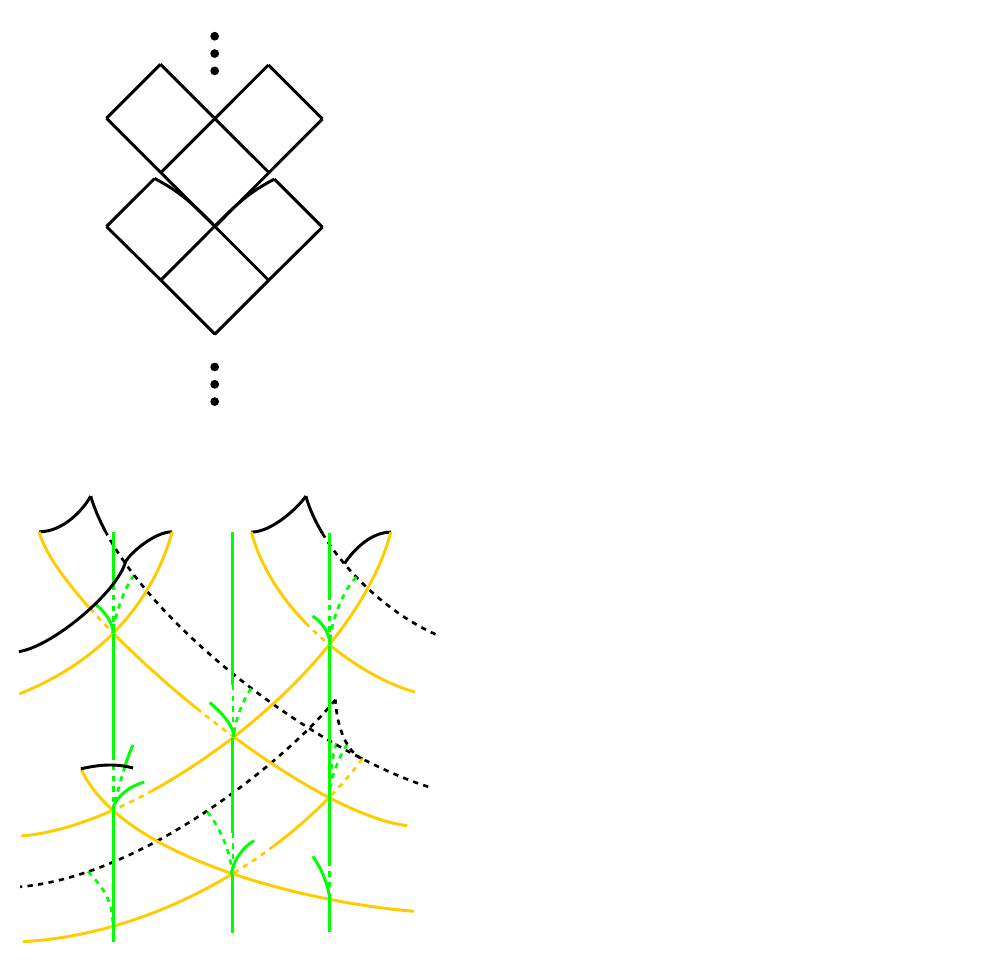}}%
    \put(0.14317274,0.85692331){\color[rgb]{0,0,0}\makebox(0,0)[lt]{\lineheight{1.25}\smash{\begin{tabular}[t]{l}$t_{1,1}$\end{tabular}}}}%
    \put(0.25622503,0.8569253){\color[rgb]{0,0,0}\makebox(0,0)[lt]{\lineheight{1.25}\smash{\begin{tabular}[t]{l}$t_{3,1}$\end{tabular}}}}%
    \put(0.14317274,0.74387303){\color[rgb]{0,0,0}\makebox(0,0)[lt]{\lineheight{1.25}\smash{\begin{tabular}[t]{l}$t_{1,2}$\end{tabular}}}}%
    \put(0.25622503,0.74387505){\color[rgb]{0,0,0}\makebox(0,0)[lt]{\lineheight{1.25}\smash{\begin{tabular}[t]{l}$t_{3,2}$\end{tabular}}}}%
    \put(0.19969888,0.80039917){\color[rgb]{0,0,0}\makebox(0,0)[lt]{\lineheight{1.25}\smash{\begin{tabular}[t]{l}$t_{2,1}$\end{tabular}}}}%
    \put(0.19969836,0.68854957){\color[rgb]{0,0,0}\makebox(0,0)[lt]{\lineheight{1.25}\smash{\begin{tabular}[t]{l}$t_{2,2}$\end{tabular}}}}%
    \put(0,0){\includegraphics[width=\unitlength,page=2]{width2wall.pdf}}%
  \end{picture}%
\endgroup%
}
    \caption{Illustrating a wall of width $2$ from the same three viewpoints as \Cref{fig:width4wall}.}
    \label{fig:width2wall}
\end{figure}

We caution the reader that $c_i$ for different $i$ may be the same cycle in $\Phi$. In fact, call a wall \textit{twisted} if $t_{i,j}=t_{w+2-i,j+h'}$ for some $h'$, and call a wall \textit{untwisted} otherwise. For a twisted wall, $c_i$ and $c_{w+2-i}$ are the same cycle for each $i$; for a untwisted wall, $c_i$ are distinct cycles. Equivalently, a wall is twisted if and only if the surface tiled by the quadrilaterals is homeomorphic to a M\"obius band.

Another point of caution is that two distinct maximal walls $\{t_{i,j}\}, \{t'_{i,j}\}$ can share a tetrahedron, say $t_{i,j}=t'_{i',j'}$. By maximality, this is only possible if $i=1$ or $w+1$ and $i'=1$ or $w'+1$, where $w$ and $w'$ are the widths of the walls respectively. In this case, the appropriate boundary cycles of the two walls touch within the shared tetrahedron. In fact, this behaviour can happen within a single wall as well, that is, a tetrahedron can appear as $t_{1,j}$ or as $t_{w+1,j}$ for more than one value of $j$. However, a tetrahedron can only appear at most twice in walls, corresponding to the $2$ side edges of the same color as the top edge. In particular a vertex of $\Phi$ can lie in at most $2$ boundary cycles.

\subsection{Classification of infinitesimal components} \label{subsec:infcompproof}

We begin our analysis of infinitesimal components of flow graphs. Throughout this subsection, fix a minimal set $V$ of the flow graph $\Phi$ associated to a veering triangulation $\Delta$. Recall that vertices of $\Phi$ are edges of $\Delta$, so it makes sense to say whether $e$ is in $V$ for an edge $e$ of $\Delta$.

Each branch of the unstable train track $\tau$ in a face of $\Delta$ is dual to some edge of that face. Let $\tau'$ be the union of those branches dual to edges of $\Delta$ that lie in $V$. Up to rotation, there are 5 configurations for the portion of $\tau'$ lying on a face of $\Delta$. We show and name these 5 types of faces in \Cref{fig:facetype}.

\begin{figure}
    \centering
        \fontsize{14pt}{14pt}\selectfont
    \resizebox{!}{8cm}{
\begingroup%
  \makeatletter%
  \providecommand\color[2][]{%
    \errmessage{(Inkscape) Color is used for the text in Inkscape, but the package 'color.sty' is not loaded}%
    \renewcommand\color[2][]{}%
  }%
  \providecommand\transparent[1]{%
    \errmessage{(Inkscape) Transparency is used (non-zero) for the text in Inkscape, but the package 'transparent.sty' is not loaded}%
    \renewcommand\transparent[1]{}%
  }%
  \providecommand\rotatebox[2]{#2}%
  \newcommand*\fsize{\dimexpr\f@size pt\relax}%
  \newcommand*\lineheight[1]{\fontsize{\fsize}{#1\fsize}\selectfont}%
  \ifx\svgwidth\undefined%
    \setlength{\unitlength}{285.62160219bp}%
    \ifx\svgscale\undefined%
      \relax%
    \else%
      \setlength{\unitlength}{\unitlength * \real{\svgscale}}%
    \fi%
  \else%
    \setlength{\unitlength}{\svgwidth}%
  \fi%
  \global\let\svgwidth\undefined%
  \global\let\svgscale\undefined%
  \makeatother%
  \begin{picture}(1,0.97347657)%
    \lineheight{1}%
    \setlength\tabcolsep{0pt}%
    \put(0,0){\includegraphics[width=\unitlength,page=1]{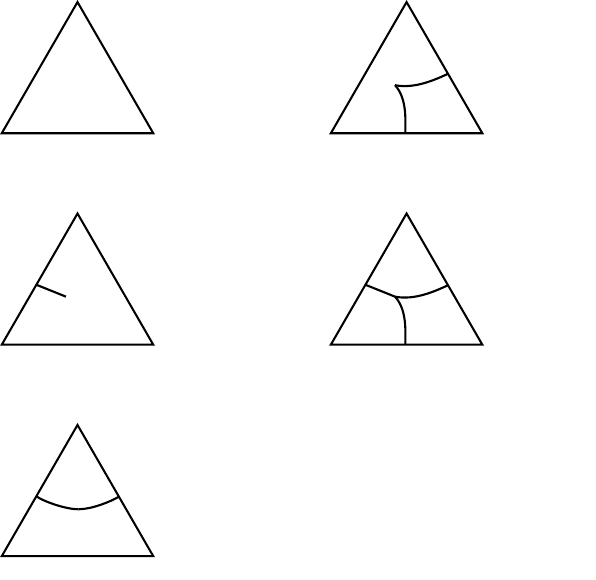}}%
    \put(0.28030729,0.36603158){\color[rgb]{0,0,0}\makebox(0,0)[lt]{\lineheight{1.25}\smash{\begin{tabular}[t]{l}(i)solated\end{tabular}}}}%
    \put(0.28030729,0.72140732){\color[rgb]{0,0,0}\makebox(0,0)[lt]{\lineheight{1.25}\smash{\begin{tabular}[t]{l}(b)lank\end{tabular}}}}%
    \put(0.28030729,0.01090647){\color[rgb]{0,0,0}\makebox(0,0)[lt]{\lineheight{1.25}\smash{\begin{tabular}[t]{l}(e)dge\end{tabular}}}}%
    \put(0.83333472,0.72140407){\color[rgb]{0,0,0}\makebox(0,0)[lt]{\lineheight{1.25}\smash{\begin{tabular}[t]{l}(c)usp\end{tabular}}}}%
    \put(0.83333472,0.36603385){\color[rgb]{0,0,0}\makebox(0,0)[lt]{\lineheight{1.25}\smash{\begin{tabular}[t]{l}(f)ull\end{tabular}}}}%
  \end{picture}%
\endgroup%
}
    \caption{The 5 possible types of faces, labelled by letters b,i,e,c,f.}
    \label{fig:facetype}
\end{figure}

We also consider the configurations for the portion of $\tau'$ lying on the boundary of a tetrahedron. Since $V$ is minimal, and since the flow graph $\Phi$ contains the cycles formed by edges going from the top edge to the bottom edge of each tetrahedron, the top edge of a tetrahedron lies in $V$ if and only if the bottom edge lies in $V$. Again by minimality, if a side edge of opposite color to the top edge is in $V$, then the bottom edge is in $V$, hence the top edge will be in $V$ as reasoned above. From these restrictions, we can enumerate 13 configurations for the portion of $\tau'$ on the boundary of a tetrahedron, up to rotation and reflection. We show and name these 13 types of tetrahedra in \Cref{fig:tettype}.

\begin{figure}
    \centering
        \fontsize{14pt}{14pt}\selectfont
    \resizebox{!}{18cm}{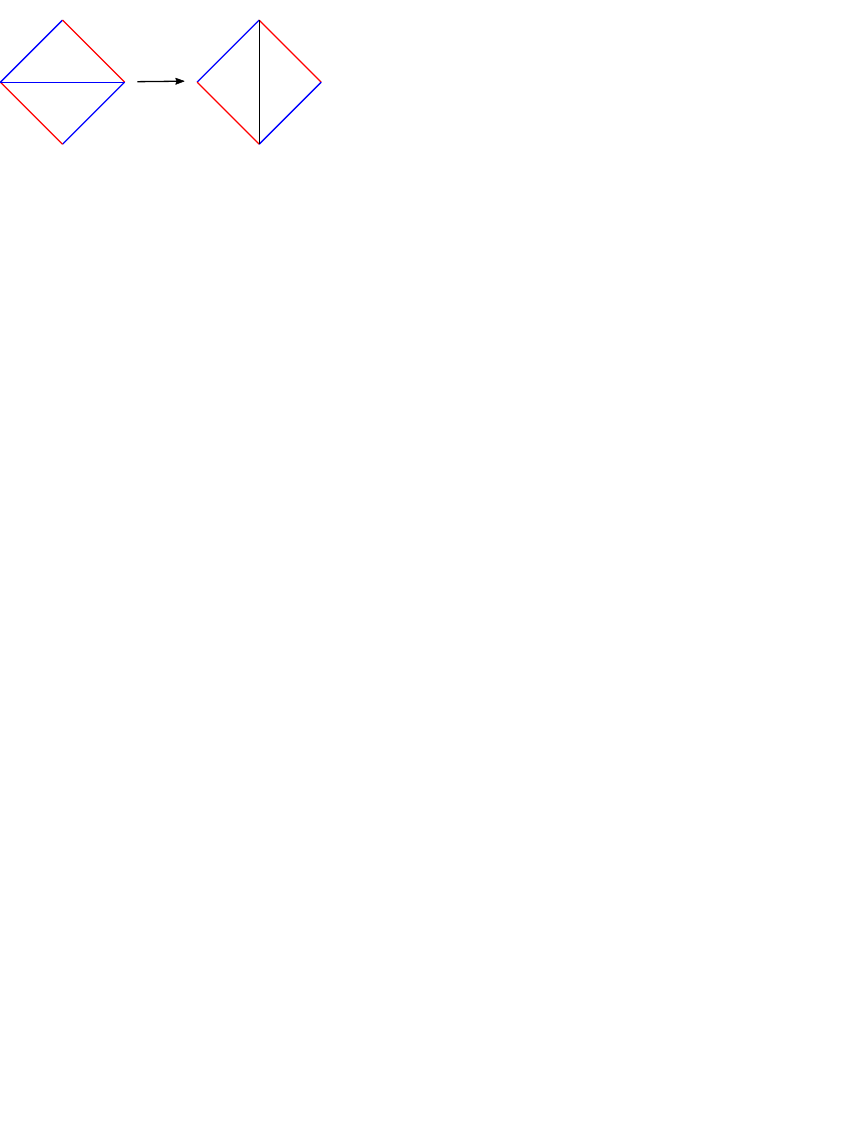}
    \caption{The 13 possible types of tetrahedra, labelled by Roman numerals 0-XII. We drew the tetrahedra by their top faces followed by their bottom faces. The tetrahedron of type (VI)-(X) inside the box are eliminated by \Cref{lemma:tettypeelim}.}
    \label{fig:tettype}
\end{figure}

\begin{lemma} \label{lemma:tettypeelim}
Tetrahedra of type (VI)-(X) will not appear.
\end{lemma}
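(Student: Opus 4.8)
The plan is to rule out types (VI)--(X) by a finite, completely combinatorial verification. The restrictions already used to cut the list down to $13$ are all internal to a single tetrahedron: the top edge of a tetrahedron lies in $V$ if and only if the bottom edge does (using that every edge lies on a top-to-bottom cycle in $\Phi$), and an opposite-coloured side edge in $V$ forces the bottom edge into $V$. The extra input needed to kill types (VI)--(X) is the interaction of a tetrahedron with its neighbours across its faces, so the argument will be ``semi-local'', in the same spirit as the first of these restrictions.

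Concretely, for each of types (VI)--(X) I would single out a face $f$ of the tetrahedron $t$ on which $\tau'$ has one of the nontrivial but non-full configurations ((i), (e), or (c) of \Cref{fig:facetype}); inspection of the pictures in \Cref{fig:tettype} shows that each boxed type has such a face. Let $t'$ be the other tetrahedron meeting $f$, so that $f$ is a top face of one of $t,t'$ and a bottom face of the other. The key structural point, which follows from \Cref{prop:fantet} --- a tetrahedron adjacent to a given tetrahedron along a bottom face meets that tetrahedron's bottom edge only as a side edge, and similarly for top faces --- is that the three edges of $f$ play complementary roles in $t$ and $t'$: the inner edge of one tetrahedron along $f$ is a same-coloured side edge of the other, while the opposite-coloured side edges of $t$ and $t'$ along $f$ coincide. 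Feeding the configuration of $\tau'$ on $f$ through the two internal restrictions above, once for $t$ and once for $t'$, then forces more edges of $f$ into $V$ than the configuration permits --- for instance an opposite-coloured side edge of $t'$ lying in $V$ while an inner edge of $t'$ does not. This contradiction rules out the type, and after disposing of all five we are left with types $0$, (I)--(V), (XI), (XII).

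I expect the main obstacle to be bookkeeping rather than ideas: one must run this argument through every rotation and reflection of each of the five types, in each case picking the appropriate face, correctly identifying via the edge colours and \Cref{prop:fantet} which edge of that face is the inner edge, the same-coloured side edge, and the opposite-coloured side edge of the neighbouring tetrahedron, and then checking that the contradiction is insensitive to the remaining gluing freedom. A minor subtlety is the length-one-stack case of \Cref{prop:fantet}, in which the neighbouring tetrahedron is forced to be a fan tetrahedron of a prescribed colour; this needs to be folded into the role identification so that it remains unambiguous.
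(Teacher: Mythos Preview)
Your approach is fundamentally different from the paper's, and the paper's argument is worth seeing: it is a two-line global double-counting. One observes that $\sum_t(\#\text{type-(c) upper faces of }t - \#\text{type-(c) lower faces of }t)=0$, since every face is upper for exactly one tetrahedron and lower for exactly one. Inspecting the thirteen pictures shows this difference is $\geq 0$ for every type and strictly positive for (VI), (VIII), (IX), (X), so those types cannot occur. A second pass with type-(i) faces (which only works once (VIII) is gone) kills (VII). No face-by-face gluing analysis is needed.

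Your local plan is plausible but the sketch understates the difficulty. The claim that ``the inner edge of one tetrahedron along $f$ is a same-coloured side edge of the other, while the opposite-coloured side edges coincide'' does not follow from \Cref{prop:fantet} alone. That proposition tells you that $\mathrm{top}(t)$ is \emph{some} side edge of $t'$, but whether it is the same-coloured or the opposite-coloured one depends on the colour of $\mathrm{top}(t')$, i.e.\ on whether $t'$ is a fan or a toggle; the two cases give genuinely different role-permutations on $f$, and hence different conditions for a contradiction. You note this in passing as a ``minor subtlety'', but it is exactly the crux: without pinning it down you cannot say which face of a type-(VII) tetrahedron, say, is supposed to yield the contradiction, nor rule out that the neighbouring tetrahedron (possibly itself of one of the boxed types) absorbs the configuration. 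Indeed, the paper's need to eliminate (VIII) \emph{before} running the (i)-count for (VII) is a hint that the boxed types can locally accommodate one another. So the proposal is not wrong in spirit, but ``bookkeeping rather than ideas'' is optimistic: you would have to do the full case split (fan/toggle neighbour, both orientations of $f$) for each of the five types and check that \emph{some} face always fails, which is substantially more work than the paper's four-sentence count.
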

\begin{proof}
We use a double-counting argument. $$\sum \text{\# type (c) upper faces} - \text{\# type (c) lower faces} =0$$ where the sum is over all tetrahedra of $\Delta$, since each face belongs to the upper face of exactly one tetrahedron and the lower face of exactly one tetrahedron. By inspection of \Cref{fig:tettype}, the number of type (c) upper faces is always greater or equal to the number of type (c) lower faces for each tetrahedron; for type (VI),(VIII),(IX),(X) tetrahedra, strict inequality holds, so these in fact cannot occur. 

A similar argument using type (i) faces eliminates the possiblity of type (VII) tetrahedra (now that type (VIII) tetrahedra are eliminated).
\end{proof}

We now analyze what happens if we have a type (i) face. Let $f$ be such a face, and $e$ be the unique edge of $f$ that lies in $V$. 

$e$ is the top edge of one tetrahedron, the bottom edge of one tetrahedron, and the side edges of two stacks of tetrahedra, one stack on each side of $e$. The face $f$ determines a side of $e$, and we claim that the stack of tetrahedra on that side consists of one tetrahedron only. 

Suppose otherwise. Then the top edge of the tetrahedron on the bottom of the stack is of opposite color to $e$ (by \Cref{prop:fantet}). Looking at \Cref{fig:tettype}, we claim that this tetrahedron must be of type (XI) or (XII). This is because these are the only types of tetrahedron with a side edge in $V$ that is of opposite color to the top edge. As a consequence, the face of this tetrahedron which meet $e$ must be of type (f). But again by looking at \Cref{fig:tettype}, the face $f$ being of type (i) forces the face below it and sharing the edge $e$ to be of type (i) too (and the tetrahedron between them being of type (I) or (II)). In fact, the same is true for the face above $f$ and sharing the edge $e$. Hence inducting upwards and downwards, the faces on the side of $f$ that are adjacent to $e$ must all be of type (i). This gives us a contradiction at the bottom of the stack of tetrahedra.

The tetrahedra on the top and bottom of $e$ have to be of type (III) or (IV). In fact, they are either both of type (III) or both of type (IV), since by the reasoning above, a type (i) face on a side of $e$ forces all of the faces in that stack with edge $e$ to be of type (i).

If the tetrahedra on the top and bottom of $e$ are of type (III), the stack on the side of $e$ opposite to $f$ also consists of one tetrahedron only by the reasoning two paragraphs above. In particular, $e$ has degree $4$ in this case. Note that there is still the freedom of the side tetrahedra being of type (I) or (II).

At this point, we recall the quadrilaterals that we considered in \Cref{subsec:wall}. By looking at the surface formed by the union of the quadrilaterals in the tetrahedra we have considered so far, this gives us a good way of keeping track of our argument. For example, in the situation of the paragraph above, we have $4$ tetrahedra that share an edge $e$, hence we have 4 quadrilaterals sharing a vertex at $e$, see \Cref{fig:wallarg1} left.

\begin{figure}
    \centering
    \fontsize{14pt}{14pt}\selectfont
    \resizebox{!}{4cm}{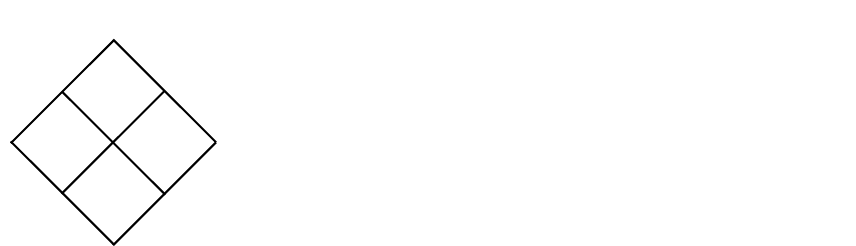}
    \caption{The types of the tetrahedra around $e$ split into three cases. The Roman numeral inside a quadrilateral is the type of the tetrahedron the quadrilateral belongs to.}
    \label{fig:wallarg1}
\end{figure}

Now, our analysis can be extended in the vertical direction. The top edge of the tetrahedron above $e$ is adjacent to a type (i) face. So we can repeat our arguments on there. This gives a group of 4 tetrahedra centered around that edge. In terms of the quadrilaterals, we are extending the tiling vertically. See \Cref{fig:wallarg2} left. A caveat of this picture, however, is that the top and bottom vertices of the quadrilaterals representing type (I) and (II) tetrahedra may not match up, since they might be representing different edges of the adjacent type (III) tetrahedron. See, for example, the situation of $t_{1,1}$ and $t_{1,2}$ in \Cref{fig:width2wall}. We denote this by making the quadrilaterals peel away slightly when the vertices do not match up. Since there are only finitely many tetrahedra, the vertical extension must loop back on itself eventually, giving us a width $2$ wall whose infinitesimal cycle lies in $V$.

We still need to tackle the case when the tetrahedra on the top and bottom of $e$ are of type (IV). We claim that the stack on the side of $e$ opposite to $f$ also consists of one tetrahedron only. Suppose otherwise, then arguing as before, the tetrahedron on the bottom of the stack is of type (XI) or (XII) and its faces that meet $e$ are of type (f). But we know that the face at the bottom of the stack that meets $e$ is of type (e), so this is not the case. 

Looking at \Cref{fig:tettype}, this single tetrahedron must be of type (IV) or (V), since these are the only types of tetrahedra that have a side edge in $V$ meeting two type (e) faces. If the side tetrahedron is of type (IV), it has two type (i) faces, and the tetrahedra sharing those type (i) faces must be of type (I) or (II). In particular, we get a cluster of 6 tetrahedra. The corresponding 6 quadrilateral tile up a region as shown in \Cref{fig:wallarg1} center.

Now repeat the argument vertically as in the last case. We obtain a width $3$ wall eventually, for which the infinitesimal cycles lie in $V$.

Finally, we tackle the case where $e$ has type (IV) tetrahedra above and below and a type (V) tetrahedron on the side opposite to $f$. See \Cref{fig:wallarg1} right. Let $e'$ be the top edge of this type (V) tetrahedron. By repeating our arguments up to this point vertically, we know that the tetrahedron on top of $e'$ is of type (IV) or (V). It is not of type (IV). Otherwise $e'$ has a side where all the faces are of type (i), contradicting the fact that the type (V) tetrahedron below it has only type (e) faces. So the sides of $e'$ have type (e) faces on the top and bottom of their stacks of tetrahedra. We have argued that this implies the stacks of tetrahdra on the sides of $e'$ have one tetrahedron each, and that these side tetrahedra are of type (IV) or (V). The same argument also applies to the bottom edge of the original type (V) tetrahedron. In terms of our picture with quadrilaterals, this extends the tiling horizontally.

Now continue the analysis inductively, both horizontally and vertically. Again, the vertical extension must loop back on itself at some point. This implies that the horizontal extension has to stop at some point, otherwise it will loop back on itself and we will get a Klein bottle or a torus tiled by quadrilaterals from type (V) tetrahedra, which contradicts $\Delta$ having a strict angle structure (see \cite[Theorem 1.5 and Corollary 3.11]{HRST11}). The horizontal extension stops by hitting a column of type (IV) tetrahedra, after which the tiling is `capped off' by type (I) or (II) tetrahedra. See \Cref{fig:wallarg2} right. Hence in every case, we get a wall with infinitesimal cycles contained in $V$.

\begin{figure}
    \centering
    \fontsize{14pt}{14pt}\selectfont
    \resizebox{!}{6cm}{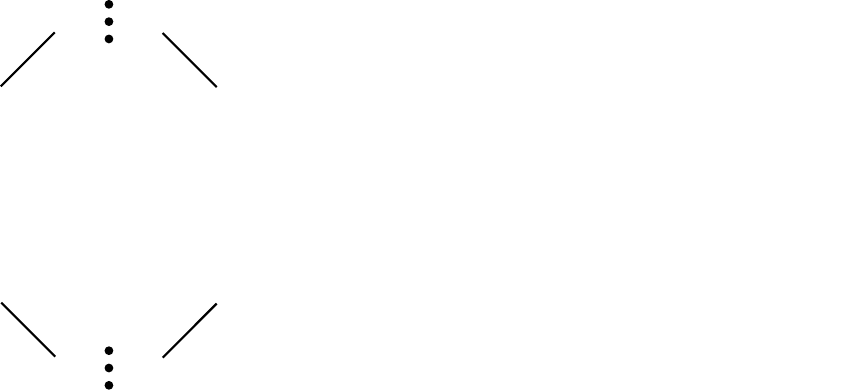}
    \caption{The patterns of quadrilaterals that can occur in our proof. Left: a width $2$ wall. Right: a width $\geq 3$ wall.}
    \label{fig:wallarg2}
\end{figure}

We claim that if $V$ is a proper subset of $V(\Phi)$, it consists solely of these infinitesimal cycles of walls. Suppose otherwise. Since these infinitesimal cycles have no incoming edges from vertices in $V$, we can remove all of them from $V$ and we would still have a non-empty minimal set. Rename $V$ as this new minimal set, and reclassify the faces and tetrahedra of $\Delta$ according to \Cref{fig:facetype,fig:tettype} with respect to this new minimal set.

After this reclassification, there are no type (i) faces anymore, since those must be contained in walls by our analysis above, and we have eliminated all strongly connected components of $V$ within walls. In particular, we can only have tetrahedra of type (0), (V), (XI), and (XII).

Suppose we have a type (e) face $f$. Let $e$ be the unique edge of $f$ not lying in $V$. Consider the stack of tetrahedra on the side of $e$ determined by $f$. Looking at \Cref{fig:tettype}, with the limited types of tetrahedra we can have, as we move downwards the stack starting from $f$, we can only get to faces of type (e). So the stack must end on a type (e) face, but then the tetrahedron directly below this face, which is also the tetrahedron below $e$, cannot be any of the types in \Cref{fig:tettype}, since none of those have a type (e) top face and have a top edge that does not lie in $V$. This contradiction shows that there are no type (e) faces anymore.

And so the only types of tetrahedra we can have are type (0) and (XII). In this case, it is clear that a tetrahedron sharing a face with a type (0) or a type (XII) tetrahedron must be of type (0) or type (XII) itself, respectively. Because $M$ is connected, $V$ is either empty or the entirety of $V(\Phi)$. This proves the claim and concludes our analysis of minimal sets of $\Phi$, which we summarize as:

\begin{thm} \label{thm:infinitesimal}
For a given veering triangulation $\Delta$, the infinitesimal components of its flow graph $\Phi$ are exactly the infinitesimal cycles of walls. Furthermore, $\Phi/{\sim}$ is a rooted height 1 tree, i.e. it has a unique vertex $v_0$ such that there is a directed edge from $v_0$ to every other vertex.
\end{thm}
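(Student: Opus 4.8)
The plan is to deduce \Cref{thm:infinitesimal} from the classification already carried out in this subsection, namely that every proper minimal set of $\Phi$ is a union of infinitesimal cycles of walls (equivalently, is contained in $V_0:=\bigcup\{\text{infinitesimal cycles of walls}\}$). Everything else is elementary digraph combinatorics, organised around one observation: for any vertex $v$ of $\Phi$, the down-closure $D(v):=\{w : v\gtrsim w\}$ is itself a minimal set, and it contains $v$.

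First I would pin down the strongly connected components of $\Phi$. Each infinitesimal cycle $c_i$ of a wall is a component: its vertices are bottom edges of the $t_{i,j}$, each of which has a \emph{unique} outgoing edge in $\Phi$ by \Cref{prop:fantet}, so $c_i$ is a cycle that cannot be escaped, hence a full component with no outgoing edge in $\Phi/{\sim}$. Set $R:=V(\Phi)\setminus V_0$. If $v\notin V_0$ then $D(v)\ni v\notin V_0$, so $D(v)$ is not contained in $V_0$, hence not a proper minimal set, so $D(v)=V(\Phi)$: every vertex of $R$ reaches every vertex. Therefore all of $R$ lies in a single component, and that component equals $R$, since a vertex of $V_0$ lies on some $c_i$ and so cannot reach anything outside $c_i$, in particular cannot reach $R$. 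Moreover $R\neq\emptyset$: otherwise every vertex of $\Phi$ would lie on some $c_i$ and hence have out-degree exactly $1$, forcing $\#E(\Phi)=\#V(\Phi)$, whereas $\#E(\Phi)=3\cdot(\#\text{tetrahedra})=3\cdot(\#\text{edges of }\Delta)=3\cdot\#V(\Phi)$. So the components of $\Phi$ are exactly $R$ together with the cycles $c_i$ ranging over all walls of $\Delta$ and $2\le i\le w$.

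The structure of $\Phi/{\sim}$ is now immediate: the $c_i$ have no outgoing edges, while $R$ sends an edge to every other component (each vertex of $R$ reaches everything, and $R\cap c_i\subseteq R\cap V_0=\emptyset$). Hence $\Phi/{\sim}$ is the star with centre $R$ and leaves the $c_i$, i.e. a rooted height $1$ tree with root $R$, and $R$ is the unique vertex from which there is an edge to any other. The infinitesimal components, those with an incoming edge in $\Phi/{\sim}$, are then precisely the $c_i$ ($R$ has no incoming edge, as nothing in $V_0$ has an outgoing edge), i.e. precisely the infinitesimal cycles of walls. The reverse inclusion of the first assertion — that each infinitesimal cycle of a wall really is an infinitesimal component — is exactly the content of the paragraph in \Cref{subsec:wall} (edges enter but none leave $c_i$), combined with the facts just shown that such a $c_i$ is a full component and receives an edge from $R$.

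The genuine obstacle is not this assembly but the classification it invokes: the case analysis through \Cref{fig:facetype}, \Cref{fig:tettype}, \Cref{lemma:tettypeelim}, and the inductive construction of a wall out of a type-(i) face, which is what forces proper minimal sets to be unions of infinitesimal wall cycles. Within the assembly itself the only delicate points are that $R$ is nonempty (the out-degree count) and that it is a single strongly connected component rather than several (the down-closure observation together with $R\cap V_0=\emptyset$); both are handled above.
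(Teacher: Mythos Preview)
Your proposal is correct and follows essentially the same approach as the paper: both take as input the classification of proper minimal sets carried out earlier in the subsection and then deduce the theorem by elementary digraph combinatorics. Your argument is somewhat more explicit than the paper's, in particular your use of down-closures $D(v)$ and your edge-count verification that $R\neq\emptyset$; the paper handles the latter implicitly (since $\Phi/{\sim}$ is finite and acyclic it must have a source, and every infinitesimal-cycle vertex has an incoming edge, so some vertex is not of that form), but your version is a perfectly good alternative.
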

\begin{proof}
We showed that any proper minimal set of $\Phi$ is a disjoint union of infinitesimal cycles of walls. This implies that every proper minimal set of $\Phi/{\sim}$ is a disjoint union of vertices. Furthermore, each infinitesimal cycle of a wall has incoming edges, hence each vertex of $\Phi/{\sim}$ which is an infinitesimal component has an incoming edge. This is enough to imply the second statement of the theorem.
\end{proof}

We use this theorem to justify our statement earlier that for a finite cover $\widetilde{\Delta}$ of a veering triangulation $\Delta$, $\widetilde{\Delta}$ has a strongly connected flow graph if and only if $\Delta$ has a strongly connected flow graph. The flow graph of $\widetilde{\Delta}$ is a covering of that of $\Delta$, and so the forward implication is easy. For the converse, if the flow graph of $\widetilde{\Delta}$ is not strongly connected, it contains infinitesimal cycles of a wall. Such a wall projects down to a wall of $\Delta$ by \Cref{defn:wall}, thus $\Delta$ contains infinitesimal cycles as well.

\begin{rmk} \label{rmk:LMTABwalls}
With this theorem, we can also discuss how walls are related to the material in \cite{LMT21}.

We first recall some terminology from \cite{LMT21}. The \textit{dual graph} to a veering triangulation $\Delta$ is defined to be the 1-skeleton, i.e. the branch locus, of the unstable branched surface $B$, with the edges oriented by the coorientations on $\Delta^{(2)}$. An \textit{AB cycle} is then defined to be a loop carried by the dual graph which only makes \textit{anti-branching turns}, i.e. does not follow components of the branch locus at each vertex of $B$. The lift of an AB cycle to the universal cover $\widetilde{M}$ determines a properly embedded plane carried by $\widetilde{B}$, by taking the union over all sectors lying below the lifted AB cycle. Such a plane is called a \textit{dynamic plane}. The total number of lifted AB cycles in a dynamic plane is defined to be the \textit{width} of the dynamic plane. If there are $2$ or more such lifted AB cycles, the region bounded in between them is called an \textit{AB region}. For more details, we refer the reader to \cite[Section 2.2, 3.1]{LMT21}.

Now one can count that there are $\lceil \frac{w}{2} \rceil$ or $w$ adjacent AB cycles within a wall of width $w$, depending on whether the wall is twisted or not. These lift to $w$ lifted AB cycles in a dynamic plane, which bound AB regions in between. Conversely, if in the universal cover $\widetilde{B}$ carries a dynamic plane containing an AB region, then the portion of the flow graph within the AB region carries a line which only has incoming edges, hence quotients to an infinitesimal cycle of a wall.
\end{rmk}

\subsection{Reduced flow graph}

Given \Cref{thm:infinitesimal}, a way of arranging for strong connectivity is to simply throw away all the infinitesimal cycles.

\begin{defn} \label{defn:reducedflowgraph}
The \textit{reduced flow graph} $\Phi_{\red}$ of a veering triangulation is the flow graph with all infinitesimal cycles in walls and the edges that enter the cycles removed.
\end{defn}

Thus by \Cref{thm:infinitesimal}, $\Phi_{\red}$ is strongly connected. Because of the simple nature of infinitesimal cycles, $\Phi_{\red}$ inherits some of the properties of $\Phi$. We end this section with two examples of this, which will be useful in \Cref{sec:pAflow}.

\begin{lemma} \label{lemma:flowgraphsorbits}
For every cycle $c$ of $\Phi$, $c$ or $c^2$ is homotopic to a cycle of $\Phi_{\red}$ in $M$.
\end{lemma}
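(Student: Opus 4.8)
The plan is to analyze how a cycle $c$ of $\Phi$ interacts with the infinitesimal cycles of walls, which by \Cref{thm:infinitesimal} are the only vertices of $\Phi$ not appearing in $\Phi_{red}$. Since $\Phi/{\sim}$ is a rooted height $1$ tree, once a directed path in $\Phi$ leaves the (unique) root strongly connected component $C_0 = V(\Phi_{red})$ it enters some infinitesimal cycle $c_i$ of a wall and can never return to $C_0$. Therefore a closed cycle $c$ either lies entirely in $\Phi_{red}$ — in which case there is nothing to prove — or $c$ lies entirely inside a single infinitesimal cycle $c_i$ of some wall $\{t_{k,j}\}$, and so $c$ is a power of $c_i$ up to reparametrization. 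Hence the lemma reduces to the case where $c$ is (a power of) an infinitesimal cycle $c_i$ of a wall.

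So fix a wall $\{t_{k,j}\}_{1 \le k \le w+1,\ j \in \mathbb{Z}/h}$ and an infinitesimal cycle $c_i$ ($2 \le i \le w$), passing through the top/bottom edges of $t_{i,j}$ for $j \in \mathbb{Z}/h$. The key observation is that $c_i$ is homotopic in $M$ to either boundary cycle $c_1$ or $c_{w+1}$ of the wall, or to a power of one of them, and the boundary cycles \emph{do} survive in $\Phi_{red}$: the vertices of a boundary cycle are top/bottom edges of the $t_{1,j}$ (resp. $t_{w+1,j}$), which have other outgoing edges in $\Phi$ and are not themselves infinitesimal cycles — indeed by the discussion following \Cref{defn:wall}, a vertex of $\Phi$ can lie in at most $2$ boundary cycles and is then in the root component. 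To produce the homotopy I would use the quadrilateral picture from \Cref{subsec:wall}: for $w \ge 3$ the quadrilaterals inside the $t_{k,j}$ tile an annulus or Möbius band $A$ carried by $B$, the cycles $c_1, \dots, c_{w+1}$ are the vertical core loops of this tiling, and any two of them are freely homotopic \emph{inside $A$} (hence inside $M$), except that if $A$ is a Möbius band — i.e. the wall is twisted — the core loop $c_i$ for a ``middle'' index may only be homotopic to $c_1^2$ (equivalently $c_{w+1}^2$) after going around twice; this is exactly the source of the $c^2$ in the statement. For $w=2$ the tiling only ``peels away'' at finitely many vertices, but the core loops are still all homotopic in the thickened neighbourhood of the wall, possibly after squaring in the twisted case.

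The main obstacle, and the step requiring the most care, is the $w = 2$ case together with the twisting phenomenon: here the union of quadrilaterals need not be an embedded annulus/Möbius band, so the homotopy between $c_i$ and a boundary cycle must be carried out in the small neighbourhood of the wall inside $B$ (as in \Cref{fig:width2wall} bottom left) rather than in a clean tiled surface, and one must check that the homotopy class of the resulting boundary cycle is genuinely realized by a cycle of $\Phi_{red}$ (not just of $\Phi$), i.e. that no vertex of that boundary cycle was itself removed. For this I would invoke \Cref{thm:infinitesimal}: a boundary cycle vertex lies in a boundary cycle, not in an infinitesimal cycle, and the two kinds of cycles are disjoint as subsets of $V(\Phi)$ — an infinitesimal cycle vertex is the bottom edge of some $t_{i,j}$ with $2\le i \le w$, which has a \emph{unique} outgoing flow-graph edge, whereas a boundary cycle vertex does not. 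Once this bookkeeping is done, combining the reduction in the first paragraph with the homotopy in the quadrilateral model finishes the proof, with $c^2$ needed precisely when the relevant wall is twisted and the cycle $c$ is an infinitesimal cycle forced to wrap twice around the Möbius core.
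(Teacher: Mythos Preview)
Your approach is essentially the paper's: reduce to infinitesimal cycles via the strongly connected component structure, then slide each infinitesimal cycle across the quadrilateral-tiled annulus or M\"obius band to a boundary cycle, which lies in $\Phi_{red}$. The paper's own proof is three sentences long and does exactly this.

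Two small points to tighten. First, your M\"obius band sentence has the squaring backwards: in a twisted wall the boundary cycle is homotopic to the \emph{square} of the core, so it is $c_i^2 \sim c_1$ (for the middle index $i=(w+2)/2$, when $w$ is even), not $c_i \sim c_1^2$. This is precisely why the lemma needs ``$c$ or $c^2$''---if your direction were right, $c$ alone would always suffice. Second, your last paragraph's claim that a boundary-cycle vertex ``does not'' have a unique outgoing edge is only immediate for \emph{maximal} walls; if the wall is not maximal the boundary cycle could itself be an infinitesimal cycle of a larger wall. The cleanest fix is simply to start by enlarging the given wall to a maximal one (the infinitesimal cycle $c_i$ remains an infinitesimal cycle of the enlarged wall), after which the paper's observation that distinct maximal walls can only share tetrahedra at indices $1$ or $w+1$ guarantees that the boundary cycle lies in $\Phi_{red}$.
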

\begin{proof}
The cycles of $\Phi$ are those of $\Phi_{\red}$ and the infinitesimal cycles of walls in $\Phi$. Each infinitesimal cycle of a wall is homotopic (isotopic, even) to a boundary cycle in $M$, unless the wall has even width and is twisted, in which case one has to double the infinitesimal cycle $c_{\frac{w}{2}}$ before it is homotopic to a boundary cycle in $M$.
\end{proof}

\begin{lemma} \label{lemma:redflowgraphcompl}
The components of $B \backslash \backslash \Phi_{\red}$ are annulus or M\"obius bands with tongues.

Moreover, the arcs of attachment of the triangular sectors criss-cross along the annulus/M\"obius band. More precisely, the arcs on the annulus/M\"obius band lift to $y=\pm x + \frac{2i}{w}, i \in \mathbb{Z}$ in the universal cover $[0,1] \times \mathbb{R}$, for some $w \geq 1$. These arcs are subintervals of the branch locus of $B$ and are oriented downwards (i.e. decreasing $y$ in the above model). See \Cref{fig:redflowgraphcompl}.
\end{lemma}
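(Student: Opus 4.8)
The plan is to leverage \Cref{lemma:flowgraphcompl} together with the analysis of walls. Since $\Phi_{red}$ is obtained from $\Phi$ by deleting the infinitesimal cycles of walls and the edges entering them, $\Phi_{red}$ is a subgraph of $\Phi$, so each component of $B \backslash\backslash \Phi_{red}$ is a union of components of $B \backslash\backslash \Phi$ glued along regular neighborhoods (in $B$) of the deleted edges. A component of $B \backslash\backslash \Phi$ disjoint from all the deleted edges is unchanged, hence an annulus or Mobius band with tongues whose attaching arcs zig-zag exactly as in \Cref{lemma:flowgraphcompl}; this is the case $w=1$ of the present statement. So it suffices to fix a maximal wall $W$ of width $w \geq 2$ (by \Cref{thm:infinitesimal} these account for all deleted edges) and understand the component of $B \backslash\backslash \Phi_{red}$ meeting it.

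First I would recall from \Cref{subsec:wall} the annulus or Mobius band $A_W$ carried by $B$ and tiled by the quadrilaterals inside the tetrahedra $t_{i,j}$ of $W$; for $w=2$ the tiling is slightly degenerate, with quadrilaterals peeling away at some vertices, but the argument goes through with minor modifications. The columns of this tiling are indexed by $1 \leq i \leq w+1$, and the cycles $c_1,\dots,c_{w+1}$ are the vertical cores of the columns, with $c_1$ and $c_{w+1}$ on $\partial A_W$ and $c_2,\dots,c_w$ the infinitesimal cycles. Inside $A_W$ the trace of $\Phi$ is exactly $c_1 \cup \dots \cup c_{w+1}$, so the components of $B \backslash\backslash \Phi$ meeting $A_W$ are the $w$ strips $N_1,\dots,N_w$, with $N_k$ lying between $c_k$ and $c_{k+1}$, each carrying tongues. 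The attaching arcs of these tongues are subintervals of the branch locus of $B$; in the brick-wall coordinates on $A_W$ each quadrilateral is crossed by two such arcs, of slopes $+1$ and $-1$, running between the two faces adjacent to an opposite-color side edge (\Cref{defn:branchsurf}, cf.\ \Cref{fig:branchsurf}), and the cycles $c_i$ cut these arcs into the pieces that appear in the various $N_k$.

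Next I would reassemble. In $\Phi_{red}$ only $c_1$ and $c_{w+1}$ survive among the $c_i$, so the component $R_W$ of $B \backslash\backslash \Phi_{red}$ meeting $A_W$ is $N_1 \cup \dots \cup N_w$ reglued along $c_2,\dots,c_w$. Upon regluing, the branch-locus arcs that had been cut by the $c_i$ rejoin; moreover the edges of $\Phi$ entering each infinitesimal cycle $c_i$ are themselves subintervals of the branch locus lying along these same arcs (a local check in each fan tetrahedron of $W$), so deleting them is consistent with this picture and, crucially, does not cause $R_W$ to absorb any component of $B \backslash\backslash \Phi$ outside $A_W$. Hence $R_W$ deformation retracts onto $A_W$ with the tongues attached along the branch-locus arcs, and $A_W$ is a smooth annulus if $W$ is untwisted and a smooth Mobius band if $W$ is twisted (\Cref{subsec:wall}). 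Normalizing the width of $A_W$ to $1$, so that each of the $w$ columns has width $\frac{1}{w}$ and the global pattern of slope-$\pm 1$ branch-locus arcs has $y$-intercepts in $\frac{2}{w}\mathbb{Z}$, the attaching arcs lift in the universal cover $[0,1]\times\mathbb{R}$ to the lines $y=\pm x + \frac{2i}{w}$, $i\in\mathbb{Z}$, oriented downward since they are subintervals of the branch locus. Together with the $w=1$ case this gives the lemma, and recovers \Cref{fig:redflowgraphcompl}.

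The main obstacle I anticipate is the bookkeeping in the middle two paragraphs: verifying tetrahedron by tetrahedron within the wall that the attaching arc in each quadrilateral has slope $\pm 1$ in the brick-wall coordinates, that consecutive such arcs criss-cross as asserted once the $c_i$ are removed, and that the edges of $\Phi$ entering the infinitesimal cycles lie along these branch-locus arcs, so that their removal neither connects $R_W$ to regions outside $A_W$ nor alters the arc configuration. This is a refinement of the local analysis carried out in the proof of \Cref{lemma:flowgraphcompl}, now performed for the fan tetrahedra making up a wall and assembled across columns using the quadrilateral tiling of \Cref{subsec:wall}.
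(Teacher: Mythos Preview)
Your approach is essentially the same as the paper's: invoke \Cref{lemma:flowgraphcompl} for components disjoint from walls (the $w=1$ case), and for components meeting a wall, reassemble the strips between consecutive $c_i$ after deleting the infinitesimal cycles. The paper's own proof is even terser than yours---it simply says the statement is ``clear from \Cref{fig:redflowgraphcompl}''---so your write-up is if anything a fleshing out of what the paper leaves to the figures.

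One small inaccuracy worth fixing: the edges of $\Phi$ entering an infinitesimal cycle are \emph{not} subintervals of the branch locus of $B$. By \Cref{defn:flowgraph} and the embedding described just after it, every edge of $\Phi$ exiting a vertex $v$ lies in the \emph{interior} of the sector dual to $v$, not on the branch locus. What you actually need (and what the figures show) is that for each such incoming edge, the two components of $B \backslash\backslash \Phi$ it separates are both already among your strips $N_k$ or the tongue regions attached to them, so deleting it merges pieces already accounted for in $R_W$ rather than absorbing anything new. This follows from the local picture in each fan tetrahedron of the wall: the sector containing the edge sits inside the quadrilateral tiling of $A_W$, and the incoming slanted edges run between adjacent strips. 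The conclusion you draw is correct; only the justification needs this adjustment.
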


\begin{figure}
    \centering
    \resizebox{!}{12cm}{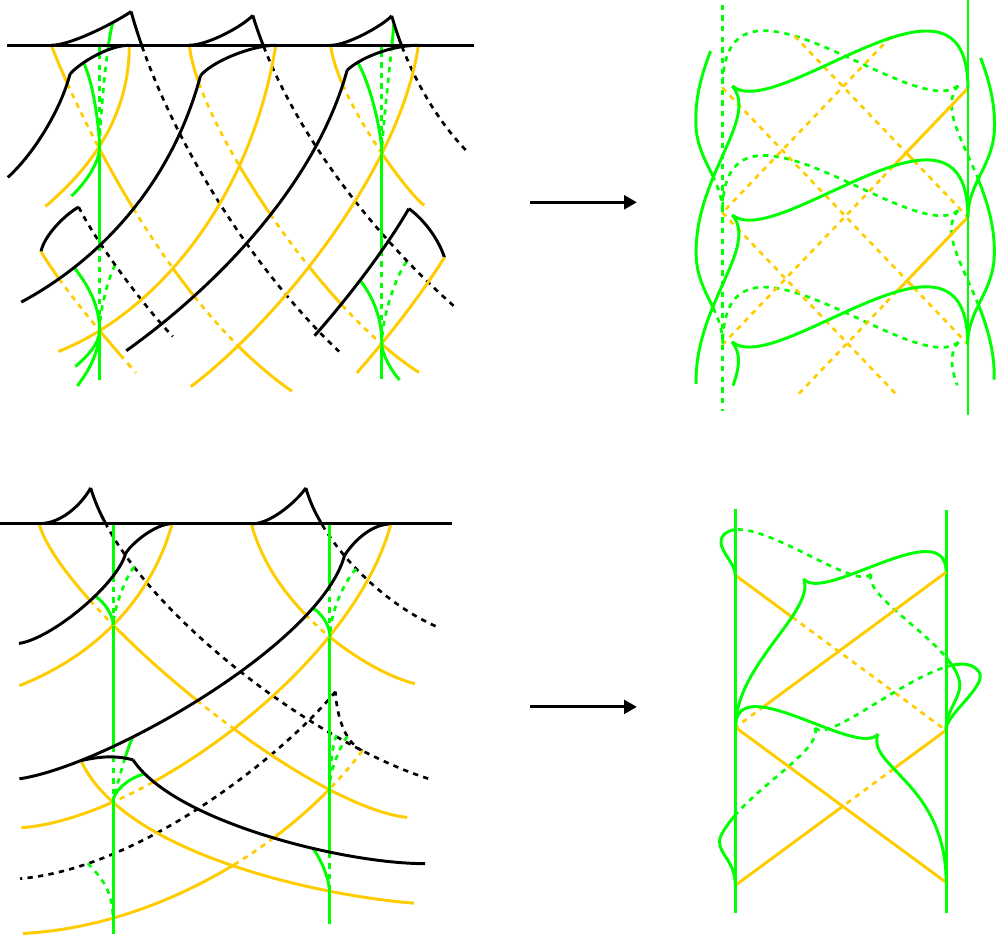}
    \caption{The complementary regions of $\Phi_{\red}$ in $B$ are annuli/M\"obius bands with tongues. The attaching arcs form a criss-cross pattern on the annulus/M\"obius band. Here we show the situation in the neighborhood of the width $4$ wall from \Cref{fig:width4wall} (top) and the width $2$ wall from \Cref{fig:width2wall} (bottom).}
    \label{fig:redflowgraphcompl}
\end{figure}

\begin{proof}
By \Cref{lemma:flowgraphcompl}, it suffices to look at the components that contain infinitesimal cycles of some wall. These components are obtained by gluing components of $B \backslash \backslash \Phi$ along the infinitesimal cycles and the edges that enter them.

To describe these gluings more precisely, let us introduce some terminology. For a component $J$ of $B \backslash \backslash \Phi$, we call the vertex of a tongue not lying on the annulus/M\"obius band the \textit{tip} of the tongue. If one cuts open $\partial J$ along all the tips, the resulting set has a natural structure as an oriented train track, where the orientation is induced from that of $\Phi$. Under this structure, each component of this set will be a cycle with some entering edges. We call each such component a \textit{crown}.

To obtain a component of $B \backslash \backslash \Phi_{\red}$ containing infinitesimal cycles, certain components of $B \backslash \backslash \Phi$ are glued together by identifying crowns in their boundaries. Under such a gluing, the union of annuli/M\"obius bands is an annulus/M\"obius band, while collections of tongues that share a tip are glued together to form individual tongues, showing that such a component of $B \backslash \backslash \Phi_{\red}$ is an annulus/M\"obius band with tongues as well.

The criss-cross pattern of the arcs of attachment follows from the zig-zag pattern for components of $B \backslash \backslash \Phi$ and from the description of the gluing.

We remark that this argument actually shows that all the infinitesimal cycles contained in a component of $B \backslash \backslash \Phi_{\red}$ belong to a single wall. In this case, $w$ in the statement of the lemma is the width of the wall. 
\end{proof}

\begin{rmk} \label{rmk:stablegluesides}
We remark that if $w \geq 3$ for a component of $B \backslash \backslash \Phi_{\red}$ as in the lemma, the tongues attached along the $y=x+\frac{2i}{N+1}$ arcs must all lie on one side and the tongues attached along the $y=-x+\frac{2i}{N+1}$ arcs must all lie on the opposite side, in the universal cover. In contrast, the sides of attachment do not have fixed patterns for $w=1,2$.
\end{rmk}

\section{Finiteness of layered veering triangulations} \label{sec:finiteness}

In this section, we fix the gap in the proof of \cite[Theorem 6.2]{Ago11}. For completeness, we explain the setup of the original theorem then proceed to show a way of fixing its proof, albeit with a weaker bound.

We first have to explain layered veering triangulations.

Let $\phi:S_{g,n} \to S_{g,n}$ be a pseudo-Anosov homeomorphism on a finite type surface with $\chi(S_{g,n})=2-2g-n<0$. $S^\circ$ will denote the surface obtained by removing the singularities of the stable and unstable foliations for $\phi$, and $\phi^\circ$ will denote the restriction of $\phi$ to $S^\circ$. Write $T(\phi^\circ)$ for the mapping torus of $\phi^\circ$.

In \cite{Ago11}, it is shown that there exists a periodic folding sequence of train tracks $\tau_0 \rightsquigarrow ...  \rightsquigarrow \tau_N$, i.e. train tracks such that $\tau_{i+1}$ is obtained from $\tau_i$ using a folding move and $\phi(\tau_N)=\tau_0$, on $S^\circ$. The sequence of ideal triangulations $\delta_i$ of $S^\circ$ dual to $\tau_i$ are then related to one another by diagonal switches in quadrilaterals, and $\phi^\circ$ sends $\delta_N$ to $\delta_0$. 

A veering triangulation can be constructed in the following way: Start with $\delta_0$ on $S^\circ$ and attach a flat tetrahedron to the bottom that effects the diagonal switch from $\delta_0$ to $\delta_1$. The bottom boundary of the complex can be identified with $(S^\circ,\delta_1)$. We inductively add tetrahedra to the bottom, until the bottom boundary of the complex can be identified with $(S^\circ,\delta_N)$. Finally, we glue this bottom boundary to the top boundary to get a triangulation $\Delta$ of $T(\phi^\circ)$, using the fact that $\phi^\circ$ sends $\delta_N$ to $\delta_0$. We color an edge of $\delta_i$ that is dual to a small branch according to the direction of smoothing at the endpoints, using \Cref{fig:layeredvtcolors}. It can be checked that this determines a well-defined edge coloring of $\Delta$ which makes it into a veering triangulation.

\begin{figure}
    \centering
    \resizebox{!}{3cm}{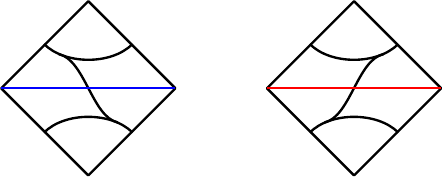}
    \caption{Defining colors of edges of $\delta_i$ that are dual to small branches. This determines a well-defined edge coloring of $\Delta$ which makes it into a veering triangulation.}
    \label{fig:layeredvtcolors}
\end{figure}

We construct a directed graph according to this description. Start with a set of $e$ vertices given by the set of edges in $\delta_0$. Notice that $\delta_1$ and $\delta_0$ differ by one edge exactly, say $\delta_1 \backslash e_1=\delta_0 \backslash e_0$. Add a vertex corresponding to $e_1$, and add three edges going from the three elements of $E(\delta_0)$ dual to the three branches of $\tau_0$ which fold onto the branch of $\tau_1$ dual to $e_1$. One of these edges goes from $e_0$ to $e_1$, call this edge \textit{vertical}, and call the other two edges \textit{slanted}. Inductively, for each $i$, add a vertex that corresponds to the new edge in $\delta_i$ and add three edges according to the folding move $\tau_i \rightsquigarrow \tau_{i+1}$, one vertical and two slanted. Call the resulting directed graph the \textit{cut open flow graph} $\Phi \backslash \backslash S$. See \Cref{fig:cutopenflowgraph}, where we draw vertical and slanted edges as vertical and slanted respectively.

\begin{figure}
    \centering
    \fontsize{14pt}{14pt}\selectfont
    \resizebox{!}{6cm}{
\begingroup%
  \makeatletter%
  \providecommand\color[2][]{%
    \errmessage{(Inkscape) Color is used for the text in Inkscape, but the package 'color.sty' is not loaded}%
    \renewcommand\color[2][]{}%
  }%
  \providecommand\transparent[1]{%
    \errmessage{(Inkscape) Transparency is used (non-zero) for the text in Inkscape, but the package 'transparent.sty' is not loaded}%
    \renewcommand\transparent[1]{}%
  }%
  \providecommand\rotatebox[2]{#2}%
  \newcommand*\fsize{\dimexpr\f@size pt\relax}%
  \newcommand*\lineheight[1]{\fontsize{\fsize}{#1\fsize}\selectfont}%
  \ifx\svgwidth\undefined%
    \setlength{\unitlength}{402.00516255bp}%
    \ifx\svgscale\undefined%
      \relax%
    \else%
      \setlength{\unitlength}{\unitlength * \real{\svgscale}}%
    \fi%
  \else%
    \setlength{\unitlength}{\svgwidth}%
  \fi%
  \global\let\svgwidth\undefined%
  \global\let\svgscale\undefined%
  \makeatother%
  \begin{picture}(1,0.38807778)%
    \lineheight{1}%
    \setlength\tabcolsep{0pt}%
    \put(0,0){\includegraphics[width=\unitlength,page=1]{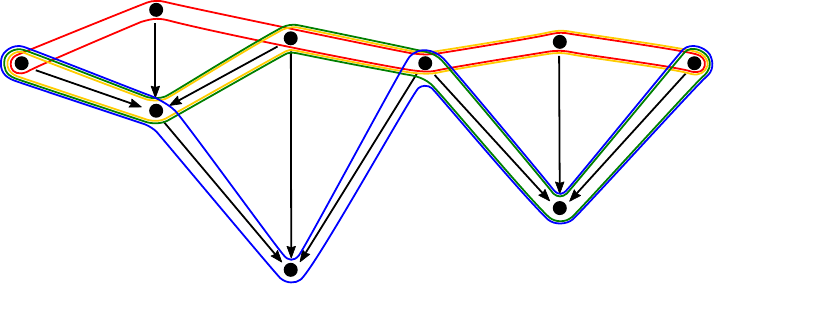}}%
    \put(0.92976614,0.31908401){\color[rgb]{1,0,0}\makebox(0,0)[lt]{\lineheight{1.25}\smash{\begin{tabular}[t]{l}$E(\delta_0)$\end{tabular}}}}%
    \put(0.92976614,0.23712137){\color[rgb]{1,0.8,0}\makebox(0,0)[lt]{\lineheight{1.25}\smash{\begin{tabular}[t]{l}$E(\delta_1)$\end{tabular}}}}%
    \put(0.92976614,0.14916944){\color[rgb]{0,0.50196078,0}\makebox(0,0)[lt]{\lineheight{1.25}\smash{\begin{tabular}[t]{l}$E(\delta_2)$\end{tabular}}}}%
    \put(0.92976614,0.05881879){\color[rgb]{0,0,1}\makebox(0,0)[lt]{\lineheight{1.25}\smash{\begin{tabular}[t]{l}$E(\delta_3)$\end{tabular}}}}%
    \put(0,0){\includegraphics[width=\unitlength,page=2]{cutopenflowgraph.pdf}}%
  \end{picture}%
\endgroup%
}
    \caption{The cut open flow graph for a layered veering triangulation. There is a natural identification between adjacent layers $E(\delta_i)$ and $E(\delta_{i+1})$, which is represented in the figure by moving one element directly downwards. This divides the cut open flow graph into columns. The cut open reduced flow graph for the veering cellulation can be obtained by deleting some columns.}
    \label{fig:cutopenflowgraph}
\end{figure}

The flow graph of $\Delta$ can be obtained from $\Phi \backslash \backslash S$ by identifying the vertices at the bottom corresponding to $E(\delta_N)$ to those at the top for $E(\delta_0)$ according to how $\phi^\circ$ sends $\delta_N$ to $\delta_0$. 

Now define another directed graph $G$ by setting the set of vertices to be the set of edges in $\delta_0$, and placing an edge from $i$ to $j$ for every directed edge path in the cut open flow graph $\Phi \backslash \backslash S$ which starts at $i$ and ends at $(\phi^\circ)^{-1}(j)$. Let $A$ be the adjacency matrix of $G$ (which we defined in \Cref{defn:PF}).

We note that $A$ can also be defined as the transition matrix describing how weights on the branches of $\tau_0$ distribute under the sequence of folding moves $\tau_0 \rightsquigarrow ... \rightsquigarrow \tau_N$ and the return map $\phi^\circ$.

Meanwhile, let $\lambda=\lambda(\phi)>1$ be the dilatation of $\phi$. The train tracks $\tau_i$ in fact carry the unstable measured lamination of $\phi$, hence the transverse measure on the leaves of the foliation collapse down to weights on the branches of $\tau_0$. These in turn define an eigenvector $(w_i)$ of $A$ with eigenvalue $\lambda$. 

The assertion of \cite[Theorem 6.2]{Ago11} is that if $\lambda^{2g-2+\frac{2}{3}n} \leq P$, then the veering triangulation $\Delta$ has at most $\frac{P^9-1}{2}$ tetrahedra.

We recall the proof presented in \cite{Ago11}, but phrased in the language here. We first claim that $G$ has at least $2N+e$ edges. This is because for every slanted edge in $\Phi \backslash \backslash S$, we can take an edge path from $E(\delta_0)$ to $E(\delta_N)$ by concatenating paths of vertical edges to the back and front of the slanted edge. Also, for every vertex $i$ in $E(\delta_0)$, there is an edge path from $i$ to $E(\delta_N)$ that consists entirely of vertical edges. Since there are $2N$ slanted edges and $e$ vertices in $E(\delta_0)$, we find $2N+e$ edges in $G$.

Also notice that by an index calculation, we have $e \leq 9(2g-2+\frac{2}{3}n)$. See \cite[Lemma 6.1]{Ago11} for details of this.

Now assume for the moment that $G$ is strongly connected. Then \cite{Ago11} proceeds by using an estimate of Ham and Song (\cite{HS07}). We repeat Ham and Song's argument here since it is a good warm-up for the proof which we will present later. Fix a vertex $i$ of $G$ and fix a spanning tree $T$ of $G$ rooted at $i$, i.e. $T$ is a subgraph of $G$ which is a tree, and for every vertex $j$ of $G$ there is an edge path from $j$ to $i$ within $T$. Then there are at least $2N+1$ edges in $G \backslash T$, and for every edge $d$ in $G \backslash T$, we can find a path in $G$ ending at $i$ and of length $e$ by prepending to the path travelling across $d$ then to $i$ within $T$ an arbitrary path at the front. These paths will be distinct, hence using the fact that $(w_i)$ is an eigenvector of $A^e$ of eigenvalue $\lambda^e$, and summing over the paths, we have $\lambda^e w_i \geq \Sigma w_j \geq (2N+1) \min_j w_j$. Taking the minimum over the left hand side, $\lambda^e \min w_i \geq (2N+1) \min w_j$, hence $N \leq \frac{\lambda^e-1}{2} \leq \frac{P^9-1}{2}$.

The problem with this argument however, is that $G$ is not always strongly connected.

In fact, $G$ is strongly connected if and only if the flow graph $\Phi$ is strongly connected. For if $\Phi$ is strongly connected, then for every pair of vertices $(i,j)$ in $E(\delta_0)$, their images can be connected by an edge path $\alpha$ in $\Phi$. The preimage of such an edge path under $\Phi \backslash \backslash S \to \Phi$ is a collection of paths $\alpha_1,...,\alpha_s$, for which the ending point of $\alpha_i$, which lies in $E(\delta_N)$, is sent by $\phi$ to the starting point of $\alpha_{i+1}$, which lies in $E(\delta_0)$. Hence the $\alpha_i$ define an edge path $\alpha'$ in $G$ connecting $i$ to $j$.

Conversely, if $\Phi$ is not strongly connected, then we can find a infinitesimal cycle $c$ of a wall. As above, $c$ lifts to a collection of paths $c_1,...,c_s$ in $\Phi \backslash \backslash S$ and determines a cycle $c'$ in $G$. In fact, $c_i$ will consist entirely of vertical edges by the definition of the infinitesimal cycles of a wall. The vertices of $G$ that lie in $c'$ are exactly those elements of $E(\delta_0)$ that have image in $c$. Moreover, $c'$ has no outgoing edges in $G'$, for otherwise there is an outgoing path of $c$ in $\Phi$.

This motivates us to consider instead the full subgraph of $G$ obtained by restricting to the set of vertices that have image in the reduced flow graph $\Phi_{\red}$ in $\Phi$. We call this subgraph $G_{\red}$. By the argument above, $G_{\red}$ can also be obtained by removing $c'$ for all infinitesimal cycles $c$ of $\Phi$. 

We also set the \textit{cut open reduced flow graph}, $\Phi_{\red} \backslash \backslash S$, to be the preimage of $\Phi_{\red} \subset \Phi$ in $\Phi \backslash \backslash S$. Let $E_{\red}(\delta_i)$ be the subset of $E(\delta_i) \subset \Phi \backslash \backslash S$ that lie in $\Phi_{\red} \backslash \backslash S$. $G_{\red}$ can be obtained by gluing $E_{\red}(\delta_N)$ at the bottom of $\Phi_{\red} \backslash \backslash S$ to $E_{\red}(\delta_0)$ at the top. Hence arguing as above, we can see that $G_{\red}$ is strongly connected.

The strategy now is to apply Ham and Song's argument on $G_{\red}$ to bound the number of vertices in $\Phi_{\red}$, then bound the number of vertices in $\Phi \backslash \Phi_{\red}$. 

We first set up some notation. Let $N'$ be the number of vertices in $\Phi_{\red}$. Let $e'$ be the number of elements in $E_{\red}(\delta_0)$.

We claim that $G_{\red}$ has at least $2N'+e'$ edges. This follows from the same argument as for $G$: we can find an edge path in $\Phi_{\red} \backslash \backslash S$ from $E_{\red}(\delta_0)$ to $E_{\red}(\delta_N)$ for every slanted edge in $\Phi_{\red} \backslash \backslash S$ and for every vertex in $E_{\red}(\delta_0)$.

Meanwhile, recall that $G \backslash G_{\red}$ is the union of $c'$ for the infinitesimal cycles $c$ of $\Phi$, where each $c'$ has no outgoing edges. Hence the adjacency matrix $A_{\red}$ of $G_{\red}$ is a submatrix of $A$ where all other entries in the same row are zeros. As a consequence, $(w_i)_{i \in G_{\red}}$ is an eigenvector of $A_{\red}$ with eigenvalue $\lambda$. 

Hence repeating Ham and Song's argument, we have $N' \leq \frac{\lambda^{e'}-1}{2}$.

Now suppose we have a width $w$ wall. For simplicity, first suppose that the wall is untwisted. Then there are $w-1$ infinitesimal cycles and $2$ boundary cycles of the wall, all of the same length $h$. The $(w-1)h$ vertices in the infinitesimal cycles are discarded in $\Phi_{\red}$, but the $2h$ vertices in the boundary cycles remain. Similarly, if the wall is twisted, there are $(w-1)h$ vertices discarded while $2h$ vertices remain, for an appropriate $h$. In other words, if we let $W$ be the maximum width of a wall in the veering triangulation $\Delta$, then for every vertex in a boundary cycle, there are at most $\frac{W-1}{2}$ vertices in the infinitesimal cycles, which we discard when passing to $\Phi_{\red}$.

Meanwhile, as discussed in \Cref{subsec:wall}, each vertex of $\Phi$ can appear at most twice in the collection of all boundary cycles. Hence we conclude that the number of discarded vertices $N-N'$  is at most $N'(W-1)$, or $N \leq N'W$. Thus it remains to bound $W$.

To that end, let $c_2,...,c_W$ be the infinitesimal cycles in a width $W$ wall, and let $c_1,c_{W+1}$ be the boundary cycles of the wall. Again, for simplicity, first suppose that the wall is untwisted. Then there are corresponding disjoint cycles $c'_1,...,c'_{W+1}$ in $G$. The length of each cycle $c'_i$ is given by the number of paths in the lift of $c_i$ in the cut open flow graph. But this number is also equal to the intersection number of $c_i$ with $S^\circ$ in $T(\phi^\circ)$. Since the $c_i$ are parallel to each other, this number is of the same value $L$ for each $i=1,...,W+1$ and hence at most $\frac{e}{W+1}$ by the pigeonhole principle. Similarly, if the wall is twisted, the infinitesimal and boundary cycles determine cycles in $G$ of lengths at most $\frac{2e}{W+1}$.

Now pick a vertex of $\Phi_{\red}$ that lies in $c_1$. One of its preimages in $\Phi_{\red} \backslash \backslash S$ has three incoming edges. $c_1$ passes through one of these edges, and for each of the remaining two, we can construct a path from $E_{\red}(\delta_0)$ to $E_{\red}(\delta_N)$ by concatenating vertical edges to the back and front of the edge. These two paths determine two edges that enter $c'_1$ at a vertex $i$ in $G_{\red}$. Since $G_{\red}$ is strongly connected, we can locate two edge paths in $G_{\red}$ of length $L$ that end at the two edges respectively. Suppose these two paths start at vertices $j$ and $k$. Then using the fact that $(w_i)$ is an eigenvector of $A^L_{\red}$ with eigenvalue $\lambda^L$, we have $\lambda^L w_i \geq w_i + w_j + w_k$, which implies 

$$w_i \geq \frac{2}{\lambda^L-1} \min_j w_j$$

But for every two vertices $i,j$ in $G_{\red}$, by strong connectivity, there is an edge path of length $\leq e'$ from $i$ to $j$, hence $w_i \geq \lambda^{-e'} w_j$. So $\frac{\max_j w_j}{\min_i w_i} \leq \lambda^{e'}$. Applying this to the above inequality.

$$ \lambda^{e'} \geq \frac{2}{\lambda^L-1} $$

$$ \lambda^L \geq 2 \lambda^{-e'}+1 $$

Hence,

$$ \frac{2e}{W+1} \geq L \geq \frac{\log(2 \lambda^{-e'}+1)}{\log \lambda} $$

$$ W \leq \frac{2 \log \lambda^e}{\log(2 \lambda^{-e'}+1)} -1 $$

Putting everything together, 

\begin{align*}
N \leq& \frac{\lambda^{e'}-1}{2} (\frac{2 \log \lambda^e}{\log(2 \lambda^{-e'}+1)}-1)\\
\leq& \frac{\lambda^e-1}{2} (\frac{2 \log \lambda^e}{\log(2 \lambda^{-e}+1)}-1)\\
\leq& \frac{P^9-1}{2} (\frac{2 \log P^9}{\log(2 P^{-9}+1)}-1)
\end{align*}

We record this as a theorem.

\begin{thm} \label{thm:boundveertet}
If $M$ is the punctured mapping torus of a pseudo-Anosov homeomorphism $\phi: S_{g,n} \to S_{g,n}$, where the normalized dilatation $\lambda(\phi)^{2g-2+\frac{2}{3}n} \leq P$, then $M$ has a veering triangulation with at most $\frac{P^9-1}{2} (\frac{2 \log P^9}{\log(2 P^{-9}+1)}-1)$ tetrahedra.
\end{thm}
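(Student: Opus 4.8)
The plan is to adapt the counting argument of \cite[Theorem 6.2]{Ago11} so that it runs on the reduced flow graph $\Phi_{red}$ rather than on $\Phi$ itself, since $\Phi$ need not be strongly connected. I would begin by recalling the layered veering triangulation $\Delta$ of $T(\phi^\circ)$ built from a periodic folding sequence $\tau_0 \rightsquigarrow \cdots \rightsquigarrow \tau_N$, so that $\Delta$ has exactly $N$ tetrahedra and the flow graph $\Phi$ has $N$ vertices, one per edge of $\Delta$ (the number of edges equals the number of tetrahedra for a taut ideal triangulation). In the cut-open picture one forms the return graph $G$ on the $e := |E(\delta_0)|$ vertices, with adjacency matrix $A$ recording how the folding sequence together with $\phi^\circ$ redistributes weights; the unstable measured lamination of $\phi$ provides a positive eigenvector $(w_i)$ of $A$ with eigenvalue $\lambda := \lambda(\phi)$, and the index estimate \cite[Lemma 6.1]{Ago11} gives $e \le 9(2g-2+\tfrac23 n)$, hence $\lambda^e \le P^9$. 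If $G$ were strongly connected, the Ham--Song argument --- fix a root $i$ and a spanning tree $T$, note $G$ has at least $2N+e$ edges so $G \setminus T$ has at least $2N+1$, observe each extra edge gives a distinct length-$e$ edge-path ending at $i$, and sum the $\lambda^e$-eigenrelation over these paths --- produces $N \le \tfrac12(\lambda^e-1)$.

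The obstruction is exactly that $G$ is strongly connected if and only if $\Phi$ is, and this can fail; but \Cref{thm:infinitesimal} tells us precisely how, namely the infinitesimal components of $\Phi$ are the infinitesimal cycles of walls, each a vertex of $\Phi/{\sim}$ with no outgoing edges. So I would replace $\Phi$ by $\Phi_{red}$ and $G$ by the full subgraph $G_{red}$ on the vertices whose image lies in $\Phi_{red}$ --- equivalently, delete the cycles $c'$ coming from infinitesimal cycles, which carry no outgoing edges in $G$. A short argument with the cut-open reduced flow graph $\Phi_{red}\backslash\backslash S$ shows $G_{red}$ is strongly connected, and since $A_{red}$ sits inside $A$ as a diagonal block with only zero entries to its left and right, the restricted vector $(w_i)_{i \in G_{red}}$ is still a $\lambda$-eigenvector of $A_{red}$. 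Running Ham--Song on $G_{red}$ then yields $N' \le \tfrac12(\lambda^{e'}-1)$, where $N' := |V(\Phi_{red})|$ and $e' := |E_{red}(\delta_0)| \le e$.

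It remains to control the vertices of $\Phi$ thrown away in $\Phi_{red}$, all of which lie in walls. First, a width-$w$ wall discards $(w-1)h$ vertices but retains the $2h$ vertices on its two boundary cycles, and every vertex of $\Phi_{red}$ lies on at most two boundary cycles (from the combinatorics in \Cref{subsec:wall}); hence $N \le N' \cdot W$, where $W$ is the maximal wall width. Second, to bound $W$: in a width-$W$ wall the cycles $c_1, \dots, c_{W+1}$ are parallel in $T(\phi^\circ)$ and meet $S^\circ$ in the same number $L$ of points, and since these points index disjoint subsets of $E(\delta_0)$ one gets $(W+1)L \le e$ (a factor $2$ appears in the twisted case). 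On the other hand, a vertex of $\Phi_{red}$ on $c_1$ has a preimage in $\Phi_{red}\backslash\backslash S$ with three incoming edges, two of which are ``new'' and give two edge-paths feeding the cycle $c_1'$ at a common vertex $i$ of $G_{red}$; combining the length-$L$ eigenrelation $\lambda^L w_i \ge w_i + w_j + w_k$ with the strong-connectedness bound $\max_j w_j / \min_j w_j \le \lambda^{e'}$ forces $\lambda^L \ge 2\lambda^{-e'}+1$, so $W \le \tfrac{2\log\lambda^e}{\log(2\lambda^{-e'}+1)}-1$. Assembling $N \le N'W$ with these two estimates, and using monotonicity together with $e',e \le 9(2g-2+\tfrac23 n)$ (so $\lambda^{e'},\lambda^e \le P^9$), gives the claimed bound.

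I expect the main obstacle --- the genuinely new ingredient over \cite{Ago11} --- to be the passage to $G_{red}$: one must know that $G_{red}$ (equivalently $\Phi_{red}$) is strongly connected while the lamination eigenvector survives restriction to it, and one must know enough about the structure of walls to carry out the two estimates bounding $N$ in terms of $N'$ and $W$. Both are supplied by the results of \Cref{sec:infcomp}: \Cref{thm:infinitesimal} guarantees that the deleted part of $G$ is a union of cycles $c'$ having no outgoing edges, and the description of walls in \Cref{subsec:wall} supplies the rest. With these in hand, everything else is the classical Ham--Song counting argument essentially verbatim.
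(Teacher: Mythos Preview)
Your proposal is correct and follows essentially the same approach as the paper's own proof: pass from $G$ to $G_{red}$ using \Cref{thm:infinitesimal}, run Ham--Song there to bound $N'$, use the wall combinatorics to get $N \le N'W$, and bound $W$ via the pigeonhole estimate $(W+1)L \le e$ together with the eigenvalue inequality $\lambda^L \ge 2\lambda^{-e'}+1$. The sequence of estimates and the way they are assembled match the paper's argument step for step.
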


\begin{rmk} \label{rmk:boundcompare}
$\frac{P^9-1}{2} (\frac{2 \log P^9}{\log(2 P^{-9}+1)}-1)$ is asymptotically $\frac{9}{2} P^{18}\log P$ as $P \to \infty$, so we have worsened the exponent on the bound in \cite[Theorem 6.2]{Ago11} by a factor of $2+\epsilon$. 

We remark that the original bound of $\frac{P^9-1}{2}$ still holds for veering triangulations that have strongly connected flow graph. However, it is not clear if this is the case for `most' $\phi$ or if there is a way to tell if this is the case just from $\phi$.

It also seems likely that there is room for improvement for our bound in the general case. For example, it should be possible to obtain better bounds on $W$ by finding more paths that enter $c'_1$, which should be easy when $L$ is large. When $W$ or $L$ is large, it should also be possible to bound $e'$ more effectively than just using $e$, which is what we have done here. See \Cref{sec:questions} for another discussion on how one might improve the bound.
\end{rmk}

\section{Pseudo-Anosov flows} \label{sec:pAflow}

In this section we will reprove Schleimer and Segerman's result that a veering triangulation induces a pseudo-Anosov flow without perfect fits on suitable Dehn fillings. We use the following notations to simplify the statement. If $s=(s_i)$ is a collection of slopes on each boundary component of $\overline{M}$, we write $M(s)$ to mean the closed 3-manifold obtained by Dehn filling $\overline{M}$ along the slopes recorded by $s$. If $a=(a_i)$ and $b=(b_i)$ are collections of slopes on each boundary component of $\overline{M}$, then by $|\langle a,b \rangle| \geq n$ we mean that the geometric intersection numbers between $a_i$ and $b_i$ on each boundary component is at least $n$.

\begin{thm} \label{thm:vtpAflow}
Suppose $M$ admits a veering triangulation. Let $l=(l_i)$ denote the collection of ladderpole curves on each boundary component. Then $M(s)$ admits a transitive pseudo-Anosov flow $\phi$ if $|\langle s,l \rangle| \geq 2$. 

Furthermore, there are closed orbits $c_i$ isotopic to the cores of the filling solid tori, such that each $c_i$ is $|\langle s_i,l_i \rangle|$-pronged, and $\phi$ is without perfect fits relative to $\{c_i\}$.
\end{thm}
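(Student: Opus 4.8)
The plan is to carry out the template construction of Birman--Williams and Mosher (\cite{BW83a}, \cite{Mos96}) with the reduced flow graph $\Phi_{red}$ playing the role of the template: thicken $\Phi_{red}$ up by flow boxes, collapse the complementary regions onto the resulting semiflow, and read the stable and unstable foliations off the unstable branched surface $B$. Concretely, I would first use the oriented (non-generic) train track structure that $\Phi_{red}$ inherits from its embedding in $B$ (see \Cref{sec:veertri}) to build a branched flow-box neighborhood $N=N(\Phi_{red})$ of $\Phi_{red}$ in $M$: one flow box $I_s\times I_u\times I$ per edge of $\Phi_{red}$, with the $I_u$-direction tangent to $B$ and the $I_s$-direction transverse to it, glued along the switches of $\Phi_{red}$. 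The $I$-directed semiflow on $N$ is well defined and transitive because $\Phi_{red}$ is strongly connected by \Cref{thm:infinitesimal}.

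Next I would analyze $M(s)\backslash\backslash N$ and collapse it onto the semiflow. By \Cref{lemma:redflowgraphcompl,rmk:stablegluesides}, the part of the complement coming from $B\backslash\backslash\Phi_{red}$ consists of annuli and Mobius bands with tongues attached along criss-crossing downward arcs; a neighborhood of such a region collapses in the $I_s$-direction onto a flow-parallel arc, except near a width-$w$ wall, where it collapses onto a single closed orbit which one checks is $w$-pronged. By \Cref{prop:complbranchsurf}, the remaining pieces are the neighborhoods $T_i\cong(\text{once-punctured cusped polygon})\times S^1$ of the torus ends, with cusp circles parallel to the ladderpole slope $l_i$. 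Performing the filling $M(s)$ along $s_i$ turns $T_i$ into a solid torus, and the hypothesis $|\langle s_i,l_i\rangle|\geq 2$ is exactly what makes this solid torus collapse onto its core $c_i$ with $c_i$ becoming a $|\langle s_i,l_i\rangle|$-pronged closed orbit (a $1$-pronged orbit would arise if $|\langle s_i,l_i\rangle|=1$, and the collapse degenerates if $\langle s_i,l_i\rangle=0$). After these collapses the semiflow extends to a flow $\phi$ on $M(s)$, and $B$ (suitably extended over the filling solid tori) carries its unstable foliation $W^u$, with $W^s$ produced by the dual construction. The monotone expansion built into the folding moves defining $B$ (\Cref{defn:branchsurf}) shows $\phi$ expands $W^u$ and contracts $W^s$; since the prong count is $\geq 3$ along wall orbits and $\geq 2$ along each $c_i$, with no $1$-pronged orbits, $\phi$ is a transitive pseudo-Anosov flow. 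For the smooth category one smooths this construction following Shannon, as in \Cref{subsec:smooth}.

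The last step, which I expect to be the main obstacle, is to rule out perfect fits relative to $\{c_i\}$. I would pass to the universal cover and argue contrapositively: a perfect fit not involving a lift of some $c_i$ would produce a half-infinite product region in $\widetilde{B}$ — a stable leaf asymptotic to an unstable leaf with a full half-plane of leaves between them — and hence, via the correspondence of \Cref{rmk:LMTABwalls}, an infinite one-sided configuration in $\widetilde{\Phi_{red}}$ not coming from a wall. But \Cref{lemma:redflowgraphcompl,rmk:stablegluesides} say that in every complementary region of $\Phi_{red}$ in $B$ the tongues are attached along strictly criss-crossing downward arcs (rather than the zig-zag pattern of \Cref{lemma:flowgraphcompl} before reduction), which forces any such configuration to double back after finite width; the only place an infinite one persists is along a wall, and every wall has been collapsed onto one of the $c_i$, or onto a higher-pronged interior orbit with the benign criss-cross structure, in the previous step. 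Translating this combinatorial rigidity of \Cref{lemma:redflowgraphcompl} into the dynamical statement about perfect fits is the delicate part; a secondary difficulty is checking carefully that the collapsed object is genuinely pseudo-Anosov — that $W^s$ and $W^u$ really are transverse singular foliations and that no $1$-pronged orbit is created — especially in the smooth setting.
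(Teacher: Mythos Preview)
Your overall architecture matches the paper's: thicken $\Phi_{red}$ by flow boxes, collapse first across $N(B)\backslash\backslash N(\Phi_{red})$ and then across $M(s)\backslash\backslash N(B)$, and verify transitivity from strong connectedness. But there is a genuine misconception in your collapse step, and your no-perfect-fits outline diverges from what actually works.

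The error is the claim that a width-$w$ wall collapses to a $w$-pronged interior singular orbit. It does not. In the paper's construction the only singular orbits are the core orbits $c_i$; walls produce no singularities at all. The point is that each complementary region $K$ of $N(\Phi_{red})$ in $N(B)$---whether or not it comes from a wall---is a thickened annulus or M\"obius band with tongues, and one collapses it by an $I$-fibering \emph{between its two stable faces} (i.e.\ in the direction along $B$, not transverse to it). This identifies the unique $S^1$ leaf on each stable face to a single regular closed orbit; the width $w$ affects only the criss-cross density of the tongue-attaching arcs (\Cref{lemma:redflowgraphcompl}), not any prong count. So your later sentence ``every wall has been collapsed onto one of the $c_i$, or onto a higher-pronged interior orbit'' is false on both counts, and your perfect-fit argument, which relies on that collapse, cannot proceed as written. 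Relatedly, $W^s$ is not produced by a ``dual construction''; the stable foliation comes for free from the $s$-coordinate of the flow boxes once the stable gluing is made.

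For no perfect fits, you correctly identify the criss-cross property of \Cref{lemma:redflowgraphcompl} as the engine, but the paper does not route through AB regions or \Cref{rmk:LMTABwalls}. Instead, for each component $c$ of the branch locus of $B$ one builds an explicit immersed annulus $P_c$ (a chain of rectangles $R_{v_i}$ joined by flow-tangent pieces $Q_i$) whose projection to the orbit space $\mathcal{O}$ is a half-infinite strip along an unstable half-leaf, narrowing monotonically away from the corresponding $\widetilde{c_i}$. Given a putative perfect fit rectangle disjoint from the $\widetilde{c_i}$, one flows its side $H$ forward until it lies in some $R_v$ and meets the image of the branch locus; one then separates into two cases according to the orientation of that branch-locus component relative to the side $K$. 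In one case $H,K$ sit in $\widetilde{P_{\widetilde c}}$ opening toward the narrowing end, forcing $F,G$ to meet; the other case reduces to the first precisely via the criss-cross pattern in the relevant component of $\widetilde{N(B)}\backslash\backslash\widetilde{N(\Phi_{red})}$. That last reduction is where \Cref{lemma:redflowgraphcompl} is actually invoked, and it is more delicate than the slogan ``infinite one-sided configurations must come from walls'' suggests.
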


We will recall the definitions of pseudo-Anosov flows, transitivity, and no perfect fits in \Cref{subsec:pAproof}. 

A subtle point of the theorem is that there are actually two common notions of pseudo-Anosov flows on 3-manifolds in the literature, which we differentiate by calling them topological pseudo-Anosov flows and smooth pseudo-Anosov flows. \Cref{thm:vtpAflow} holds for both notions, due to \Cref{thm:top2smooth}. We will explain this technicality more in \Cref{subsec:pAproof,subsec:smooth}.

The proof can be outlined as follows. We first thicken up the reduced flow graph $\Phi_{\red}$ in $M$ to $N(\Phi_{\red})$ by replacing its edges with flow boxes. This set can be considered as a subset of a neighborhood of the unstable branched surface, $N(B)$, naturally. By understanding the complement of $\Phi_{\red}$ in $B$, we are able to glue faces of $N(\Phi_{\red})$ across its complementary regions in $N(B)$. Similarly, by understanding the complement of $B$ in $M$, we are able to glue faces of $N(B)$ across its complementary regions in $M(s)$. These gluings preserve the (singular) 1-dimensional foliation on the flow boxes, hence that descends down to a (honest) 1-dimensional foliation on $M(s)$, which can be parametrized into a topological flow, for which we show is pseudo-Anosov, transitive, and without perfect fits (relative to the orbits $\{c_i\}$).

\begin{rmk}
It is not too difficult to show that a veering triangulation induces a (topological) pseudo-Anosov flow on $M(s)$ for $|\langle s,l \rangle| \geq 2$, using the tool of dynamic pairs developed by Mosher in \cite{Mos96}. Specifically, one can apply the proof of \cite[Proposition 2.6.2]{Mos96} to $(B, \Phi)$ to produce a dynamic pair in $M(s)$, which by \cite[Theorem 3.4.1]{Mos96} gives rise to a pseudo-Anosov flow. The more challenging part however, at least from this approach, is to show that such a pseudo-Anosov flow is transitive and has no perfect fits. 

In our proof, we use a lot of the same ideas as \cite{Mos96}, but most notably we skip over constructing the `stable branched surface' in a dynamic pair, and instead construct a pseudo-Anosov flow directly from the `unstable branched surface' $B$ and the `dynamic train track' $\Phi_{\red}$, to use the terminology from \cite{Mos96}. This allows us to analyze the pseudo-Anosov flow using the special properties of $\Phi_{\red}$ and $B$, proving transitivity and no perfect fits.
\end{rmk}

\subsection{Thickening up $\Phi_{\red}$}

We know that $\Phi_{\red}$ is strongly connected by construction. This is equivalent to its adjacency matrix $A \in Hom(\mathbb{R}^{V(\Phi_{\red})}, \mathbb{R}^{V(\Phi_{\red})})$ being irreducible. As such, by the Perron-Frobenius Theorem, $A$ has a positive eigenvector $(w_v)$ with eigenvalue $\lambda \geq 1$. We know that $\lambda>1$ since each vertex of $\Phi_{\red}$ has three incoming edges. Meanwhile, $A$ being irreducible implies $A^T$ is irreducible as well, and so the latter has a positive eigenvector $(w'_v)$ with the same eigenvalue $\lambda >1$.

We remark in passing that each vertex of $\Phi_{\red}$ having $3$ incoming edges in fact implies that $\lambda=3$ and $w_v=1$ for all $v$. This knowledge, however, will play no role in the construction at all. We simply wish to point out that the value of $\lambda$ has nothing to do with the dilatation factor of the monodromy when $\Delta$ is layered.

Recall that $\Phi_{\red}$ naturally sits on $B$ inside $M$. We will thicken up $\Phi_{\red}$ by replacing each edge $e$ of $\Phi_{\red}$ going from $v$ to $w$ by a flow box $$Z_e \cong \{(s,u,t) \in \mathbb{R}^3: |s|\leq w_v \lambda^{t-1}, |u| \leq w'_w \lambda^{-t}, t \in [0,1] \}$$ See \Cref{fig:flowbox}. There are two 2-dimensional foliations on $Z_e$: the first one by leaves of the form $\{u=u_0 \lambda^{-t} \}_{u_0}$, which we will call the stable foliation, and the second one by leaves of the form $\{s=s_0 \lambda^{t-1}\}_{s_0}$, which we will call the unstable foliation. There is also an oriented 1-dimensional foliation on $Z_e$ by curves $\{(s_0\lambda^{t-1}, u_0\lambda^{-t},t): t \in [0,1]\}_{s_0,u_0}$, oriented by decreasing $t$. Notice that the leaves of the stable and unstable foliations intersect transversely along leaves of the oriented 1-dimensional foliation.

\begin{figure}
    \centering
    \fontsize{16pt}{16pt}\selectfont
    \resizebox{!}{4cm}{
\begingroup%
  \makeatletter%
  \providecommand\color[2][]{%
    \errmessage{(Inkscape) Color is used for the text in Inkscape, but the package 'color.sty' is not loaded}%
    \renewcommand\color[2][]{}%
  }%
  \providecommand\transparent[1]{%
    \errmessage{(Inkscape) Transparency is used (non-zero) for the text in Inkscape, but the package 'transparent.sty' is not loaded}%
    \renewcommand\transparent[1]{}%
  }%
  \providecommand\rotatebox[2]{#2}%
  \newcommand*\fsize{\dimexpr\f@size pt\relax}%
  \newcommand*\lineheight[1]{\fontsize{\fsize}{#1\fsize}\selectfont}%
  \ifx\svgwidth\undefined%
    \setlength{\unitlength}{180.36318513bp}%
    \ifx\svgscale\undefined%
      \relax%
    \else%
      \setlength{\unitlength}{\unitlength * \real{\svgscale}}%
    \fi%
  \else%
    \setlength{\unitlength}{\svgwidth}%
  \fi%
  \global\let\svgwidth\undefined%
  \global\let\svgscale\undefined%
  \makeatother%
  \begin{picture}(1,1.29843748)%
    \lineheight{1}%
    \setlength\tabcolsep{0pt}%
    \put(0,0){\includegraphics[width=\unitlength,page=1]{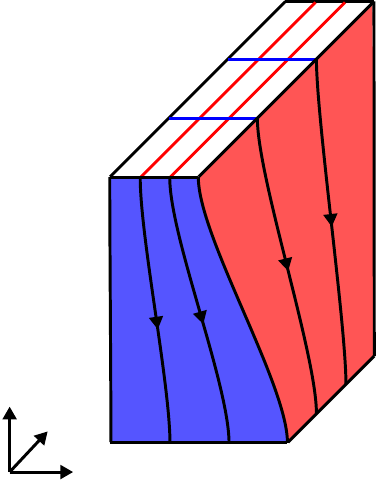}}%
    \put(-0.00755312,0.24596661){\color[rgb]{0,0,0}\makebox(0,0)[lt]{\lineheight{1.25}\smash{\begin{tabular}[t]{l}$t$\end{tabular}}}}%
    \put(0.20152163,0.01759146){\color[rgb]{0,0,0}\makebox(0,0)[lt]{\lineheight{1.25}\smash{\begin{tabular}[t]{l}$u$\end{tabular}}}}%
    \put(0.1189819,0.16816847){\color[rgb]{0,0,0}\makebox(0,0)[lt]{\lineheight{1.25}\smash{\begin{tabular}[t]{l}$s$\end{tabular}}}}%
  \end{picture}%
\endgroup%
}
    \caption{A flow box.}
    \label{fig:flowbox}
\end{figure}

Give $Z_e$ its Euclidean metric induced from $\mathbb{R}^3$. There is a natural way of gluing up the $Z_e$ across their top and bottom faces at the vertices of $\Phi_{\red}$, preserving the metric on those faces, by definition of $w_v$ and $w'_v$. A priori there is a freedom for $Z_e$ to twist along $e$, we get rid of this by requiring that the framing on $Z_e$ induced from its $u$ coordinate match up with the framing on $e$ induced from $B$. See \Cref{fig:thicken}. We call the resulting set $N(\Phi_{\red})$, since it is a regular neighborhood of $\Phi_{\red}$ in $M$.

\begin{figure}
    \centering
    \fontsize{14pt}{14pt}\selectfont
    \resizebox{!}{8cm}{
\begingroup%
  \makeatletter%
  \providecommand\color[2][]{%
    \errmessage{(Inkscape) Color is used for the text in Inkscape, but the package 'color.sty' is not loaded}%
    \renewcommand\color[2][]{}%
  }%
  \providecommand\transparent[1]{%
    \errmessage{(Inkscape) Transparency is used (non-zero) for the text in Inkscape, but the package 'transparent.sty' is not loaded}%
    \renewcommand\transparent[1]{}%
  }%
  \providecommand\rotatebox[2]{#2}%
  \newcommand*\fsize{\dimexpr\f@size pt\relax}%
  \newcommand*\lineheight[1]{\fontsize{\fsize}{#1\fsize}\selectfont}%
  \ifx\svgwidth\undefined%
    \setlength{\unitlength}{498.12827813bp}%
    \ifx\svgscale\undefined%
      \relax%
    \else%
      \setlength{\unitlength}{\unitlength * \real{\svgscale}}%
    \fi%
  \else%
    \setlength{\unitlength}{\svgwidth}%
  \fi%
  \global\let\svgwidth\undefined%
  \global\let\svgscale\undefined%
  \makeatother%
  \begin{picture}(1,0.71490139)%
    \lineheight{1}%
    \setlength\tabcolsep{0pt}%
    \put(0,0){\includegraphics[width=\unitlength,page=1]{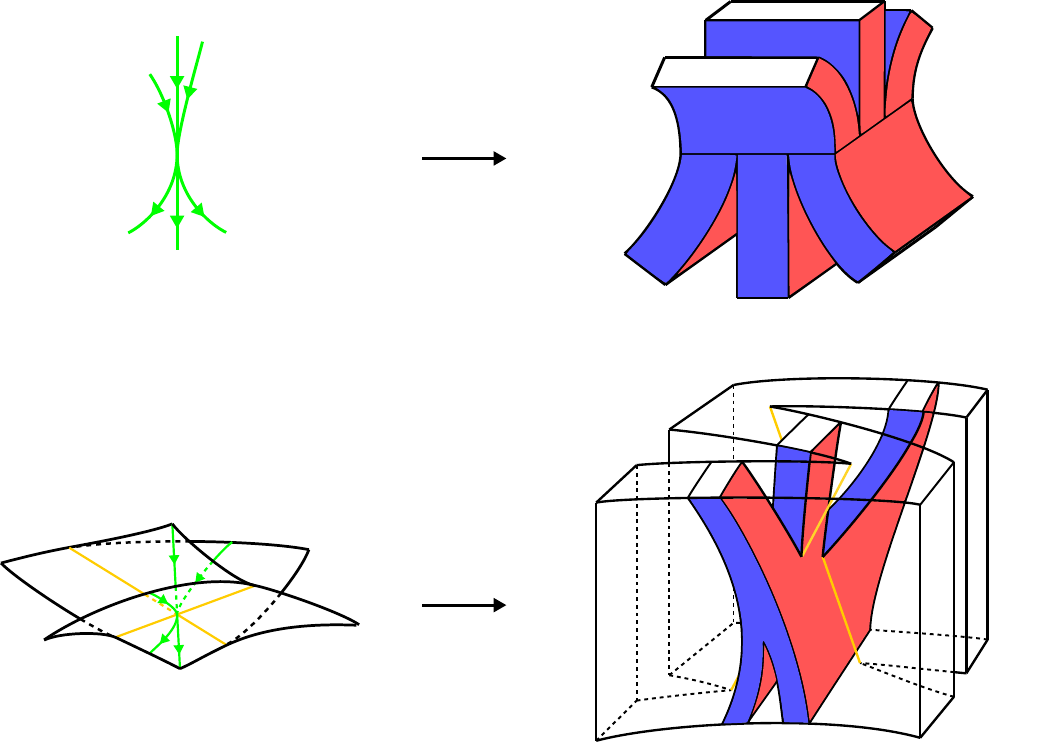}}%
    \put(0.88977488,0.6177298){\color[rgb]{1,0,0}\makebox(0,0)[lt]{\lineheight{1.25}\smash{\begin{tabular}[t]{l}$\partial^s N(\Phi_{red})$\end{tabular}}}}%
    \put(0.51727148,0.56069522){\color[rgb]{0,0,1}\makebox(0,0)[lt]{\lineheight{1.25}\smash{\begin{tabular}[t]{l}$\partial^u N(\Phi_{red})$\end{tabular}}}}%
  \end{picture}%
\endgroup%
}
    \caption{Thickening up $\Phi_{\red}$ to $N(\Phi_{\red})$, which naturally embeds in $N(B)$.}
    \label{fig:thicken}
\end{figure}

Note that the oriented 1-dimensional foliations on $Z_e$ piece together to give a decomposition of $N(\Phi_{\red})$. This decomposition is almost an oriented 1-dimensional foliation, except the `leaves' are oriented 1-manifolds possibly with train track singularities branching off in forward and backward directions, arising from the multiple outgoing and incoming edges at the vertices. Despite this, for convenience we will still refer to this decomposition as an oriented 1-dimensional foliation. 

Similarly, the stable and unstable foliations on $Z_e$ each piece together to give a decomposition of $N(\Phi_{\red})$ which is almost a 2-dimensional foliation, except that the `leaves' have branching. We will refer to these as the stable and unstable foliations on $N(\Phi_{\red})$ respectively. 

Denote $\partial^s Z_e=\{u= \pm w'_w \lambda^{-t}\}$ and $\partial^u Z_e=\{s= \pm w_v \lambda^{t-1}\}$. We will call $\partial^s N(\Phi_{\red}) := \bigcup_e \partial^s Z_e$ the \textit{stable boundary} of $N(\Phi_{\red})$ and $\partial^u N(\Phi_{\red}) := \bigcup_e \partial^u Z_e$ the \textit{unstable boundary} of $N(\Phi_{\red})$. We will also call the collection of the closures of intervals where the interior of the top and bottom faces of $Z_e$ meets $\partial^s N(\Phi_{\red})$ or $\partial^u N(\Phi_{\red})$ the branch locus. These are exactly the places where the leaves of the oriented 1-dimensional foliation on $N(\Phi_{\red})$ have branching.

Now consider an $I$-fibered neighborhood $N(B)$ of $B$ in the cusped model. This means that $N(B)$ is a closed regular neighborhood of $B$, with a map $M \to M$ restricting to a projection on $N(B) \to B$ with $I$-fibers and a homeomorphism outside of $N(B)$, and so that the boundary of $N(B)$ can be decomposed into surfaces with boundary, the interior on which the projection is a local homeomorphism, and the boundary curves correspond to the components of the branch locus of $B$. We will often conflate a component of the branch locus of $B$ with the corresponding circle on $\partial N(B)$. We will also call the collection of circles in the latter setting the branch locus of $N(B)$. 

$N(\Phi_{\red})$ can be arranged to be a subset of $N(B)$, in such a way that $\partial^u N(\Phi_{\red}) \subset \partial N(B)$, the branch locus of $N(\Phi_{\red})$ is a subset of the branch locus of $N(B)$, and $N(\Phi_{\red})$ is saturated with respect to the $I$-fibering of $N(B)$.

Note that we do not require the intervals $[-w_v \lambda^{t_0-1}, w_v \lambda^{t_0-1}] \times \{ u_0 \} \times \{ t_0 \} \subset Z_e \subset N(\Phi_{\red})$ to coincide with the $I$-fibers of $N(B)$. Indeed, this is impossible since the branch locus of $N(\Phi_{\red})$ are parallel along such intervals, while branch locus of $N(B)$ project to transverse curves on $B$. One can arrange for this property by `splitting' $N(\Phi_{\red})$ slightly, but since this does not aid our construction, we will not do so. 

\subsection{Gluing along the stable boundary} \label{subsec:stableglue}

The next step is to glue $N(\Phi_{\red})$ along its stable boundary across its complementary regions in $N(B)$, so that the foliations on $N(\Phi_{\red})$ descend to respective foliations on $N(B)$. To perform the gluing, we have to understand the complementary regions in question. 

By \Cref{lemma:redflowgraphcompl}, the complementary regions of $\Phi_{\red}$ in $B$ are annuli or M\"obius bands with tongues. The complementary regions of $N(\Phi_{\red})$ in $N(B)$ are $I$-fibered neighborhoods of these, see \Cref{fig:stableglue1} (where the red intervals will come into play later). Fix one of these components $K$. The boundary of $K$ is naturally divided into $\partial^s K \cup \partial^u K$, where $\partial^s K$ is identified to $\partial^s N(\Phi_{\red})$ in $M$ and $\partial^u K \subset \partial N(B)$. In particular $\partial^s K$ inherits the oriented 1-dimensional foliation on $\partial^s N(\Phi_{\red})$. Furthermore, if we call the $I$-fibers over the tips of the tongues of $B \backslash \backslash \Phi_{\red}$ the \textit{cusps} of $K$, $\partial^s K$ can be divided along the cusps into two or one components (depending on whether the corresponding component of $B \backslash \backslash \Phi_{\red}$ is an annulus or a M\"obius band with tongues respectively). Call these components the \textit{stable faces} of $K$. 

Meanwhile, recall that $N(\Phi_{\red})$, hence $\partial^s N(\Phi_{\red})$, inherits the Euclidean metric of the flow box $Z_e$. The oriented 1-dimensional foliation contracts the metric in the transverse direction. Hence the oriented foliation on each stable face of $K$ has exactly one $S^1$ leaf, and all other leaves enter through the cusps of $K$ and spiral into the $S^1$ leaf. 

\begin{figure}
    \centering
    \resizebox{!}{8cm}{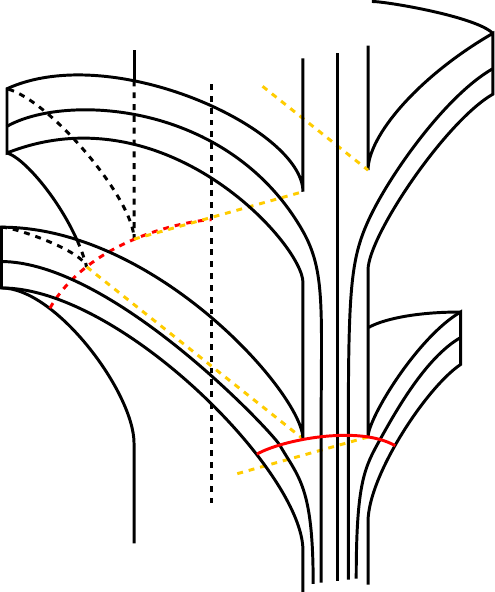}
    \caption{Complementary regions of $N(\Phi_{\red})$ in $N(B)$. We construct an $I$-fibering conjugating the flows on the stable face, and collapse along the $I$-fibers.}
    \label{fig:stableglue1}
\end{figure}

More rigorously, parametrize the oriented 1-dimensional foliation on $\partial^s N(\Phi_{\red})$ in some way, for example according to the $t$ coordinate of each $Z_e$, so that we can consider it as a forward semiflow. Take the intervals $[-w_v, w_v] \times \{ \pm w'_w \lambda^{-1} \} \times \{ 1 \} \subset Z_e \subset N(\Phi_{\red})$, with the Euclidean metrics, as sections of the flow, and note that the first return map contracts the metric by $\lambda^{-1}$, then apply Banach fixed point theorem. 

We claim that there is an $I$-fibering of $K$, transverse to the stable faces and parallel to $\partial^u K$, for which the induced homeomorphism between the stable faces preserves the oriented 1-dimensional foliations on them, i.e. send leaves to leaves in an orientation preserving way. 

Suppose first that the corresponding component of $B \backslash \backslash \Phi_{\red}$ is an annulus with tongues. Choose short local sections to the foliation near the unique $S^1$ leaf on each stable face of $K$, call them $J, J'$. The return maps of the oriented foliations on $J, J'$, which we call $h, h'$, are \textit{contracting} by our analysis above, i.e. $h^k(J)$ is a strictly decreasing collection of subintervals with $\bigcap_{k=1}^\infty h^k(J)$ being a point, and similarly for $h'^k(J')$. Note that these points could lie on endpoints of $J,J'$ if one side of the annulus has no tongues attached.

Construct the $I$-fibering on $K$ in the following steps:
\begin{enumerate}
    \item Define the $I$-fibering on the cusps of $K$, where the $I$-fibering is uniquely determined but degenerate, in the sense that the $I$-fibers are just points.
    \item Extend the $I$-fibering to the triangular faces of $\partial^u K$, so that the base of the triangles, which are among the branch locus of $N(B)$, are $I$-fibers.
    \item Extend the $I$-fibering across subintervals of the leaves of the foliation on the stable faces which start on the branch locus and end at the interior of $J$ or $J'$.
    \item Extend the $I$-fibering across subintervals of the leaves which start on the interior of the cusps of $K$ and end at the interior of $J$ or $J'$, using the fact that the union of these subintervals is a finite union of rectangles foliated as products.
    \item Apply \Cref{lemma:expandextendbb} below by taking $f$ to be the homeomorphism induced by portion of the $I$-fibering already defined. Then use the extended $f$ given by the lemma to construct the $I$-fibering between $J$ and $J'$, and complete the construction by extending the $I$-fibering along the leaves as they go around the stable faces.
\end{enumerate}
See \Cref{fig:stableglue2} for a graphical summary of these steps.

\begin{figure}
    \centering
    \fontsize{16pt}{16pt}\selectfont
    \resizebox{!}{6cm}{
\begingroup%
  \makeatletter%
  \providecommand\color[2][]{%
    \errmessage{(Inkscape) Color is used for the text in Inkscape, but the package 'color.sty' is not loaded}%
    \renewcommand\color[2][]{}%
  }%
  \providecommand\transparent[1]{%
    \errmessage{(Inkscape) Transparency is used (non-zero) for the text in Inkscape, but the package 'transparent.sty' is not loaded}%
    \renewcommand\transparent[1]{}%
  }%
  \providecommand\rotatebox[2]{#2}%
  \newcommand*\fsize{\dimexpr\f@size pt\relax}%
  \newcommand*\lineheight[1]{\fontsize{\fsize}{#1\fsize}\selectfont}%
  \ifx\svgwidth\undefined%
    \setlength{\unitlength}{277.55422913bp}%
    \ifx\svgscale\undefined%
      \relax%
    \else%
      \setlength{\unitlength}{\unitlength * \real{\svgscale}}%
    \fi%
  \else%
    \setlength{\unitlength}{\svgwidth}%
  \fi%
  \global\let\svgwidth\undefined%
  \global\let\svgscale\undefined%
  \makeatother%
  \begin{picture}(1,1.37912733)%
    \lineheight{1}%
    \setlength\tabcolsep{0pt}%
    \put(0,0){\includegraphics[width=\unitlength,page=1]{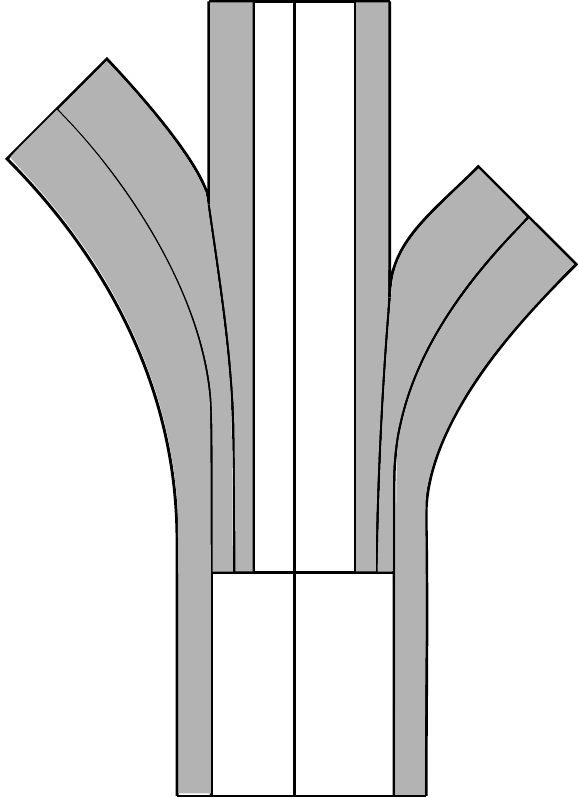}}%
    \put(-0.00573157,1.21058335){\color[rgb]{0,0,0}\makebox(0,0)[lt]{\lineheight{1.25}\smash{\begin{tabular}[t]{l}(1)\end{tabular}}}}%
    \put(0.26409984,1.19552839){\color[rgb]{0,0,0}\makebox(0,0)[lt]{\lineheight{1.25}\smash{\begin{tabular}[t]{l}(2)\end{tabular}}}}%
    \put(0.23553399,0.78788607){\color[rgb]{0,0,0}\makebox(0,0)[lt]{\lineheight{1.25}\smash{\begin{tabular}[t]{l}(4)\end{tabular}}}}%
    \put(0.3880139,0.20498834){\color[rgb]{0,0,0}\makebox(0,0)[lt]{\lineheight{1.25}\smash{\begin{tabular}[t]{l}(5)\end{tabular}}}}%
    \put(0.33128599,0.89954662){\color[rgb]{0,0,0}\makebox(0,0)[lt]{\lineheight{1.25}\smash{\begin{tabular}[t]{l}(3)\end{tabular}}}}%
  \end{picture}%
\endgroup%
}
    \caption{The steps for constructing the $I$-fibering on each component of $N(B) \backslash \backslash N(\Phi_{\red})$.}
    \label{fig:stableglue2}
\end{figure}

\begin{lemma} \label{lemma:expandextendbb}
Let $h:J \to J, h': J' \to J'$ be injective contracting maps on intervals. Let $f:J \backslash h(J) \to J' \backslash h'(J')$ be a given homeomorphism. Then there is a unique way of extending $f$ to a homeomorphism $J \to J'$ so that $h' f=f h$.
\end{lemma}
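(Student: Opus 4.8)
The plan is to build the extension by conjugating $f$ across $I\setminus h(I)$, which serves as a fundamental domain for $h$. We may assume $h$ and $h'$ are orientation-preserving, as they are in our application ($I,I'$ are short transversals near the core circle of an annulus and $h,h'$ the corresponding first-return maps); some such hypothesis is genuinely needed, since the extension can fail to be continuous when exactly one of $h,h'$ reverses orientation. First I would set $p:=\bigcap_{k\ge1}h^k(I)$, which is a single point by the contracting hypothesis, and note $h(p)=p$ because $h(p)\in\bigcap_{k\ge2}h^k(I)=\{p\}$; similarly $p':=\bigcap_{k\ge1}h'^k(I')$ satisfies $h'(p')=p'$. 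Since $I\supsetneq h(I)\supsetneq h^2(I)\supsetneq\cdots$ is strictly nested with intersection $\{p\}$ and $h$ is injective, one obtains the partition $I\setminus\{p\}=\bigsqcup_{k\ge0}\bigl(h^k(I)\setminus h^{k+1}(I)\bigr)=\bigsqcup_{k\ge0}h^k\bigl(I\setminus h(I)\bigr)$, so every $x\in I\setminus\{p\}$ is uniquely of the form $h^k(y)$ with $k\ge0$ and $y\in I\setminus h(I)$, and likewise for $I'$.

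Next, the construction and the uniqueness statement are really the same computation. If $g:I\to I'$ is a homeomorphism with $h'g=gh$ and $g|_{I\setminus h(I)}=f$, then $g\circ h^k=h'^k\circ g$, so $g\bigl(h^k(I)\bigr)=h'^k(I')$ for all $k$, hence $g(p)=p'$ (for injective $g$ and a nested sequence, $g$ commutes with $\bigcap$), while $g\bigl(h^k(y)\bigr)=h'^k\bigl(f(y)\bigr)$ for $y\in I\setminus h(I)$. Thus the extension, if it exists, is forced to be the map $\overline f$ defined by $\overline f(p):=p'$ and $\overline f\bigl(h^k(y)\bigr):=h'^k\bigl(f(y)\bigr)$, which proves uniqueness. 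Conversely one checks directly that this $\overline f$ satisfies $h'\overline f=\overline f h$: on $I\setminus\{p\}$ this is immediate from the defining formula together with injectivity of $h$, and at $p$ it follows from $h(p)=p$ and $h'(p')=p'$.

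It remains to verify that $\overline f$ is a homeomorphism. Bijectivity is clear, since $\overline f$ carries $h^k(I\setminus h(I))$ bijectively onto $h'^k(I'\setminus h'(I'))$, these two families partition $I\setminus\{p\}$ and $I'\setminus\{p'\}$, and $\overline f(p)=p'$. For continuity the only points needing attention are $p$ and the junctions between consecutive fundamental-domain pieces. Continuity at $p$ is automatic: the pieces $h^k(I\setminus h(I))$ accumulate only at $p$, and their images $h'^k(I'\setminus h'(I'))$ only at $p'$. At a junction point, the two one-sided values of $\overline f$ agree because $f$, being a homeomorphism $I\setminus h(I)\to I'\setminus h'(I')$, necessarily sends the endpoints of $I\setminus h(I)$ lying in $\partial I$ to the endpoints of $I'\setminus h'(I')$ lying in $\partial I'$, and the remaining ``inner'' endpoints---which are precisely the $h$-images of the first kind---to the inner endpoints of $I'\setminus h'(I')$---which are the $h'$-images of theirs; with $h,h'$ orientation-preserving this is exactly the compatibility that makes the pieces of the formula $\overline f(h^k(y))=h'^k(f(y))$ glue continuously across the junctions. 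Finally, applying the same recipe to $f^{-1}$ produces $\overline f^{-1}$, which is therefore continuous as well, so $\overline f$ is a homeomorphism. The one place I expect to need genuine care is this junction continuity---that is, checking that an arbitrary homeomorphism $f$ between $I\setminus h(I)$ and $I'\setminus h'(I')$ already lines up the boundary combinatorics in the way the telescoping construction requires.
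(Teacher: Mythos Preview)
Your proof is correct and follows exactly the same approach as the paper's: extend $f$ by setting it equal to $h'^k \circ f \circ (h^k)^{-1}$ on $h^k(I)\setminus h^{k+1}(I)$ and sending the fixed point to the fixed point. Your version is considerably more detailed than the paper's three-sentence argument, and you rightly flag the orientation-compatibility needed at the junctions, which the paper leaves implicit (it holds in the application since $h$ and $h'$ are first-return maps of a common oriented foliation on nearby transversals).
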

\begin{proof}
Write $J^{(k)}=h^k(J), J'^{(k)}=h'^k(J')$. By assumption, $J^{(k)}$ is a shrinking subinterval of $J$ which contracts to a point $s$ and $J'^{(k)}$ is a shrinking subinterval of $J'$ which contracts to a point $s'$. Extend $f$ by defining it to be $h'^k f (h^k)^{-1}$ on $J^{(k)} \backslash J^{(k+1)}$. Finally define $f(s)=s'$.
\end{proof}

If the corresponding component of $B \backslash \backslash \Phi_{\red}$ is a M\"obius band with tongues instead, we choose two local sections $\overline{J_1}, \overline{J_2}$ on the one stable face near the $S^1$ leaf, so that we have return maps $\overline{J_1} \to \overline{J_2}, \overline{J_2} \to \overline{J_1}$ for the oriented foliation. (We can pick $\overline{J_1}$ first, then pick $\overline{J_2}$ near $\overline{J_1}$ using the fact that the return map $\overline{J_1} \to \overline{J_1}$ is contracting.) Let $s_1, s_2$ be where the $S^1$ leaf meets $\overline{J_1}, \overline{J_2}$. $s_1, s_2$ have to be in the interior of $\overline{J_1}, \overline{J_2}$ in this setting. Let $J_i$ be the subinterval of $\overline{J_i}$ to the left of $s_i$, $J'_i$ be the subinterval of $\overline{J_{i+1}}$ to the right of $s_{i+1}$ (indices taken mod $2$, and left/right measured relative to the direction of the flow and a fixed orientation on the stable face). First construct the $I$-fibering on subintervals of leaves that start on cusps of $K$ and end at the interior of $\overline{J_1}$ or $\overline{J_2}$, then apply \Cref{lemma:expandextend} below (with $m=2$) to construct the $I$-fibering across a neighborhood of the periodic trajectory as above.

\begin{lemma} \label{lemma:expandextend}
Let $h_i:J_i \to J_{i+1}, h'_i:J'_i\to J'_{i+1}$ be injective contracting maps on intervals $J_1,...,J_m,J'_1,...,J'_m$ (indices taken mod $m$). Here, contracting means \{$h_{mk+i-1} \cdots h_i (J_i)\}_k$ is a decreasing collection of subintervals and $\bigcap_{k=1}^\infty h_{mk+i-1} \cdots h_i (J_i)$ is a single point for each $i$, and similarly for $h'_i$. Let $f_i:J_i \backslash h_{i-1}(J_{i-1}) \to J'_i \backslash h'_{i-1}(J'_{i-1})$ be given homeomorphisms. Then there is a unique way of extending $f_i$ to homeomorphisms $J_i \to J'_i$ so that $h'_i f_i=f_{i+1} h_i$ for all $i$.
\end{lemma}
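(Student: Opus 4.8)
The plan is to repeat the argument of \Cref{lemma:expandextendbb} verbatim, but carrying the extra bookkeeping needed for the cyclic composition of the $m$ maps. First I would organize $I_i$ and $I'_i$ into \emph{generations}. For each $i$ and $k \geq 0$ set
$$B_i^{(k)} := h_{i-1}\circ h_{i-2}\circ\cdots\circ h_{i-k}(I_{i-k})$$
(a composition of $k$ maps, all indices mod $m$, so $B_i^{(0)} = I_i$), and let $A_i^{(k)} := B_i^{(k)}\setminus B_i^{(k+1)} = h_{i-1}\cdots h_{i-k}\bigl(I_{i-k}\setminus h_{i-k-1}(I_{i-k-1})\bigr)$; define $B_i'^{(k)}, A_i'^{(k)}$ analogously. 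The $B_i^{(k)}$ form a nested decreasing family of subintervals, and since the cofinal subfamily $\{B_i^{(mj)}\}_j$ is the orbit of $I_i$ under the first-return composition $h_{i-1}\circ\cdots\circ h_{i-m}$, the contracting hypothesis gives $\bigcap_k B_i^{(k)} = \{s_i\}$. Hence $I_i = \{s_i\}\sqcup\bigsqcup_{k\geq 0} A_i^{(k)}$, with $A_i^{(0)}$ being exactly the given domain of $f_i$, and likewise for $I'_i$.

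Next I would define the extension by $\tilde f_i(s_i) := s'_i$ and, on each $A_i^{(k)}$,
$$\tilde f_i := (h'_{i-1}\circ\cdots\circ h'_{i-k})\circ f_{i-k}\circ (h_{i-1}\circ\cdots\circ h_{i-k})^{-1},$$
which reduces to the given $f_i$ when $k=0$. Each $h_{i-1}\circ\cdots\circ h_{i-k}$ is injective and continuous, hence restricts to a homeomorphism $A_{i-k}^{(0)}\to A_i^{(k)}$, and similarly on the primed side, so $\tilde f_i$ is a homeomorphism $A_i^{(k)}\to A_i'^{(k)}$ carrying $B_i^{(k)}$ onto $B_i'^{(k)}$ for every $k$. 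As the generations accumulate only at $s_i$ and $\tilde f_i$ respects the nestings, $\tilde f_i$ is a monotone bijection of $I_i$ onto $I'_i$, hence continuous (a monotone surjection onto an interval has no jumps), hence a homeomorphism. For the equivariance $h'_i\tilde f_i = \tilde f_{i+1}h_i$, I would take $x\in A_i^{(k)}$, write $x = (h_{i-1}\cdots h_{i-k})(y)$ with $y\in A_{i-k}^{(0)}$, observe $h_i(x) = (h_i h_{i-1}\cdots h_{i-k})(y)\in A_{i+1}^{(k+1)}$, and unwind the two defining formulas to obtain $\tilde f_{i+1}(h_i(x)) = (h'_i h'_{i-1}\cdots h'_{i-k})f_{i-k}(y) = h'_i(\tilde f_i(x))$; for $x = s_i$ note $h_i(s_i)\in\bigcap_j h_i\bigl(B_i^{(mj)}\bigr)\subseteq\bigcap_j B_{i+1}^{(mj)} = \{s_{i+1}\}$, so $h_i(s_i) = s_{i+1}$ and both sides equal $s'_{i+1}$.

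Uniqueness follows exactly as in \Cref{lemma:expandextendbb}: the relation forces $f_{i+1} = h'_i f_i h_i^{-1}$ on $h_i(I_i) = B_{i+1}^{(1)}$, so iterating it cyclically forces the displayed formula on every $A_i^{(k)}$ with $k\geq 1$, while continuity forces $f_i(s_i) = \lim_{k\to\infty} f_i\bigl(A_i^{(k)}\bigr) = s'_i$; together with the prescribed values on $A_i^{(0)}$ this determines each $f_i$.

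I expect the only genuine obstacle to be organizational: verifying that the generations $A_i^{(k)}$ really exhaust $I_i$ down to the single point $s_i$. The subtlety is that the contracting hypothesis is stated only along full cycles — compositions of length $mk$ — whereas the generations involve intermediate compositions; this is handled by squeezing each $B_i^{(k)}$ between two consecutive first-return iterates, which are cofinal in the nested family. Once that and the identity $h_i(s_i) = s_{i+1}$ are established, the remainder is a mechanical transcription of the $m=1$ argument.
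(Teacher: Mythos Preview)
Your proposal is correct and takes essentially the same approach as the paper: the paper's proof literally says ``This can be proved as in the previous lemma (which is the `baby case' when $m=1$), with more annoying bookkeeping,'' and your argument is precisely that bookkeeping written out---the same generational decomposition $A_i^{(k)}$, the same conjugation formula $(h'_{i-1}\cdots h'_{i-k})\circ f_{i-k}\circ(h_{i-1}\cdots h_{i-k})^{-1}$, and the same uniqueness by forced iteration. Your observation that the cofinal subfamily $\{B_i^{(mj)}\}_j$ controls the full nested family $\{B_i^{(k)}\}_k$ is exactly the ``squeeze'' needed, and is the only point where the general $m$ case differs nontrivially from $m=1$.
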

\begin{proof}
This can be proved as in the previous lemma (which is the `baby case' when $m=1$), with more annoying bookkeeping. We remark that we stated this lemma for all $m \geq 1$ because we will need this generality in the next subsection.
\end{proof}

\begin{rmk} \label{rmk:kopell}
The homeomorphism across the stable faces induced by such an $I$-fibering cannot be made to preserve the Euclidean structure on $\partial^s N(\Phi_{\red})$ in general. In particular, the homeomorphism will not, in general, take an interval of the form $[-w_v \lambda^{t_0-1}, w_v \lambda^{t_0-1}] \times \{ \pm w'_w \lambda^{-t_0} \} \times \{ t_0 \} \subset Z_e \subset N(\Phi_{\red})$ to another interval of this form.

Also, the homeomorphism cannot be chosen to be a diffeomorphism in general (with respect to the smooth structures induced by the Euclidean structures). This arises from the fact that one cannot replace `homeomorphisms' by `diffeomorphisms' in \Cref{lemma:expandextend} in general by Kopell's Lemma (\cite{Kop70}). 
\end{rmk}

Now collapse across all such components $K$ along the $I$-fiberings. This collapses $N(\Phi_{\red})$ onto $N(B)$. The oriented 1-dimensional foliation on $N(\Phi_{\red})$ descends to a decomposition of $N(B)$. The `leaves' of which are still not all 1-manifolds, but now we only have branching in the backwards direction, since all the forward branching occurs along some cusp of some $K$, and we have collapsed all forward trajectories starting from those points. As before, we will still call this decomposition an oriented 1-dimensional foliation for convenience of notation.

Similarly, the stable and unstable foliations on $N(\Phi_{\red})$ descend to decompositions of $N(B)$, which we call the stable and unstable foliations on $N(B)$ respectively. The leaves of the stable foliation on $N(B)$ are in fact 2-manifolds by the same argument as the last paragraph, but those of the unstable foliation on $N(B)$ have branching. 

\begin{rmk} \label{rmk:stableglueboundary}
For the purposes of the next step of the construction, it is worth noting that when restricted to the unstable boundary, the collapse takes $\partial^u N(\Phi_{\red})$ onto $\partial N(B)$, under which the restriction of the oriented 1-dimensional foliation of $N(\Phi_{\red})$ to $\partial^u N(\Phi_{\red})$ descends to the restriction of the oriented 1-dimensional foliation of $N(B)$ to $\partial N(B)$.

In particular, we can recover the latter in the following way: take the image of $\Phi_{\red}$ on the boundary of $M \backslash \backslash B$, thicken it up within $\partial (M \backslash \backslash B)$ by replacing each edge by a flow box $\{ (u,t): |u| \leq w'_w \lambda^{-t}, t \in [0,1]\}$ foliated by $\{(u_0 \lambda^{-t}, t): t \in [0,1] \}_{u_0}$ and piecing together their top and bottom edges at the vertices of $\Phi_{\red}$. The $I$-fiberings on the components $K$ constructed above, when restricted to $\partial^u K$, will determine $I$-fiberings of the complementary regions, and we collapse along these $I$-fibers to get the oriented foliation on $\partial N(B)$.
\end{rmk}

Before we move on, we will construct some local sections to the foliation which will be used to prove no perfect fits in \Cref{subsec:pAproof}. Fix a component $c$ of the branch locus of $B$. Consider the subset of double points of the branch locus of $B$ that lie on $c$. These are each dual to some veering tetrahedron in $\Delta$. Let $V_c$ be the subset of vertices of $\Phi_{\red}$ which are the bottom edges of one of these veering tetrahedra. Cyclically order $V_c=\{v_1,...,v_s\}$ according to the order in which $c$ meets the corresponding double points. 

Meanwhile, for a vertex $v$ of $\Phi_{\red}$, let $R_v$ be the union of the top faces of the flow boxes $Z_e$ as $e$ varies over the edges of $\Phi_{\red}$ that exit $v$. Equivalently, this is also the union of the bottom faces of $Z_e$ as $e$ varies over the edges of $\Phi_{\red}$ that enter $v$. These are embedded rectangles transverse to the foliation on $N(\Phi_{\red})$.

For a collection $V_c=\{v_1,..,v_s\}$ as above, each $R_{v_i} \subset N(\Phi_{\red})$ contains a subinterval of $c$. Here we remind the reader that we are using the same name for a component of the branch locus of $B$ and the corresponding component of the branch locus of $N(B)$. Let $\partial^- R_{v_i}$ be the side of $R_{v_i}$ which $c$ enters, and let $\partial^+ R_{v_i}$ be the side of $R_{v_i}$ which $c$ exits. Note that $\partial^+ R_{v_i}$ and $\partial^- R_{v_{i+1}}$ lie on stable faces of the same component of $N(B) \backslash \backslash N(\Phi_{\red})$ (indices taken mod $s$). We drew one instance of these as red intervals in \Cref{fig:stableglue1}.

After collapsing along the components of $N(B) \backslash \backslash N(\Phi_{\red})$, the image of the union $\bigcup_{i=1}^s R_{v_i}$ in $N(B)$ contains $c$, since for each component $K$ of $N(B) \backslash \backslash N(\Phi_{\red})$, the bases of the triangular faces of $\partial^u K$ are among the $I$-fibers that we chose. The images of $\partial^+ R_{v_i}$ and $\partial^- R_{v_{i+1}}$ may not match up away from $c$. However, what we will show is that we can at least find surfaces $Q_i$ which are unions of finite subintervals of leaves of the 1-dimensional foliation on $N(B)$ which connect the image of $\partial^+ R_{v_i}$ to a subinterval of that of $\partial^- R_{v_{i+1}}$. See \Cref{fig:stableglue3} top.

\begin{figure}
    \centering
    \resizebox{!}{8cm}{
\begingroup%
  \makeatletter%
  \providecommand\color[2][]{%
    \errmessage{(Inkscape) Color is used for the text in Inkscape, but the package 'color.sty' is not loaded}%
    \renewcommand\color[2][]{}%
  }%
  \providecommand\transparent[1]{%
    \errmessage{(Inkscape) Transparency is used (non-zero) for the text in Inkscape, but the package 'transparent.sty' is not loaded}%
    \renewcommand\transparent[1]{}%
  }%
  \providecommand\rotatebox[2]{#2}%
  \newcommand*\fsize{\dimexpr\f@size pt\relax}%
  \newcommand*\lineheight[1]{\fontsize{\fsize}{#1\fsize}\selectfont}%
  \ifx\svgwidth\undefined%
    \setlength{\unitlength}{285.22374417bp}%
    \ifx\svgscale\undefined%
      \relax%
    \else%
      \setlength{\unitlength}{\unitlength * \real{\svgscale}}%
    \fi%
  \else%
    \setlength{\unitlength}{\svgwidth}%
  \fi%
  \global\let\svgwidth\undefined%
  \global\let\svgscale\undefined%
  \makeatother%
  \begin{picture}(1,0.79681514)%
    \lineheight{1}%
    \setlength\tabcolsep{0pt}%
    \put(0,0){\includegraphics[width=\unitlength,page=1]{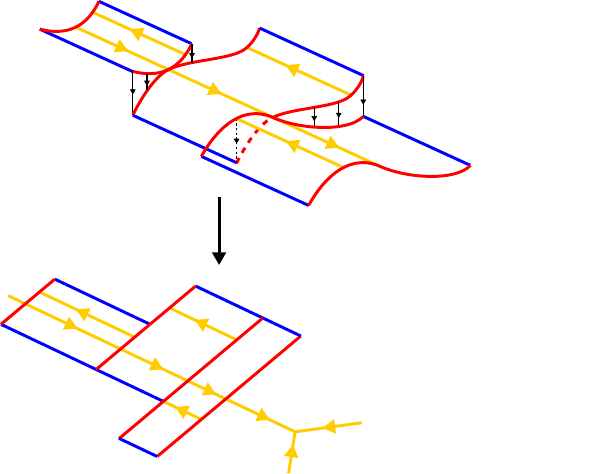}}%
    \put(0.89180643,0.51408166){\color[rgb]{0,0,0}\makebox(0,0)[lt]{\lineheight{1.25}\smash{\begin{tabular}[t]{l}$M$\end{tabular}}}}%
    \put(0.89180643,0.07265981){\color[rgb]{0,0,0}\makebox(0,0)[lt]{\lineheight{1.25}\smash{\begin{tabular}[t]{l}$\mathcal{O}$\end{tabular}}}}%
    \put(0.48661755,0.74011318){\color[rgb]{0,0,0}\makebox(0,0)[lt]{\lineheight{1.25}\smash{\begin{tabular}[t]{l}$R_{v_i}$\end{tabular}}}}%
    \put(0.62029373,0.63438243){\color[rgb]{0,0,0}\makebox(0,0)[lt]{\lineheight{1.25}\smash{\begin{tabular}[t]{l}$Q_i$\end{tabular}}}}%
    \put(0.69179981,0.57272323){\color[rgb]{0,0,0}\makebox(0,0)[lt]{\lineheight{1.25}\smash{\begin{tabular}[t]{l}$R_{v_{i+1}}$\end{tabular}}}}%
    \put(0.37114225,0.65312908){\color[rgb]{1,0.8,0}\makebox(0,0)[lt]{\lineheight{1.25}\smash{\begin{tabular}[t]{l}$c$\end{tabular}}}}%
  \end{picture}%
\endgroup%
}
    \caption{The surfaces $P_c$ are unions of images of $R_{v_i}$ and connecting surfaces $Q_i$.}
    \label{fig:stableglue3}
\end{figure}

Let $K$ be the component of $N(B) \backslash \backslash N(\Phi_{\red})$ which contains both $\partial^+ R_{v_i}$ and $\partial^- R_{v_{i+1}}$. To construct $Q_i$, consider the universal cover $\widetilde{K}$ of $K$. Note that there is a notion of height on $\widetilde{K}$, with respect to the $y$-coordinate in the model described in \Cref{lemma:redflowgraphcompl}, as opposed to $K$ where that coordinate is circular. The oriented 1-dimensional foliations on the stable faces of $K$ lift to oriented 1-dimensional foliations on the stable faces of $\widetilde{K}$. Similarly, the $I$-fibering on $K$ lifts to an $I$-fibering on $\widetilde{K}$. A component of the branch locus of $K$ is a subinterval of $c$ connecting $\partial^+ R_{v_i}$ and $\partial^- R_{v_{i+1}}$. Lift $\partial^+ R_{v_i}$ and $\partial^- R_{v_{i+1}}$ together with the connecting subinterval to $\widetilde{K}$. We abuse notation and still call the lifts of $\partial^+ R_{v_i}$ and $\partial^- R_{v_{i+1}}$ by the same name.

We claim that the criss-cross property stated in \Cref{lemma:redflowgraphcompl} implies that the component of $\partial^u \widetilde{K}$ containing an endpoint of $\partial^+ R_{v_i}$ does not lie below that containing the endpoint of $\partial^- R_{v_{i+1}}$ on the same side. To check this, let $J$ be the component of $B \backslash \backslash \Phi_{\red}$ corresponding to $K$ and consider the universal cover $\widetilde{J}$ of $J$. Without loss of generality, we can assume that the lifted connecting subinterval corresponds to $y=-x$ in the branch locus of $\widetilde{J}$, under the model described in \Cref{lemma:redflowgraphcompl}. Notice that in this case $\partial^+ R_{v_i}$ lies along the stable face of $\widetilde{K}$ corresponding to $x=0$ while $\partial^- R_{v_{i+1}}$ lies along that corresponding to $x=1$.

Meanwhile, there are two sides to $[0,1] \times \mathbb{R} \subset \widetilde{J}$ for which its tongues are attached along. Correspondingly, the components of $\partial^u \widetilde{K}$ also lie on two sides, and we need to check our claim on the two sides separately. On the side for which the tongue is attached along $y=-x$, the component of $\partial^u \widetilde{K}$ containing the endpoint of $\partial^+ R_{v_i}$ is the same as that containing the endpoint of $\partial^- R_{v_{i+1}}$, unless the tongue attached along $y=x-2$ is done so on this same side (hence $w=1$ or $2$), in which case the former is higher up than the latter. 

On the other side, the component of $\partial^u \widetilde{K}$ containing the endpoint of $\partial^+ R_{v_i}$ is strictly higher up than that containing the endpoint of $\partial^- R_{v_{i+1}}$, unless $w=1$ and the tongue attached along $y=x-2$ is done so on the same side as that along $y=-x$, in which case the two component coincide.

This claim implies that if we transfer $\partial^+ R_{v_i}$ to the other stable face of $\widetilde{K}$ using the homeomorphism induced by the $I$-fibering, and call the image $\partial^{++} R_{v_i}$ temporarily, then the leaves of the 1-dimensional foliation passing through $\partial^{++} R_{v_i}$ will also pass through $\partial^- R_{v_{i+1}}$. Hence we can take the union of subintervals of leaves going between $\partial^{++} R_{v_i}$ and $\partial^- R_{v_{i+1}}$, project down to $K$ then collapse to $N(B)$ to get $Q_i$. 

Here we caution that $\partial^{++} R_{v_i}$ may not lie strictly above $\partial^- R_{v_{i+1}}$, that is, some of the leaves that form $Q_i$ may be oriented from $\partial^- R_{v_{i+1}}$ to $\partial^{++} R_{v_i}$, as indicated in \Cref{fig:stableglue3}. The important feature here, rather, is that $Q_i \cap \partial^+ R_{v_i} = \partial^+ R_{v_i}$ and $Q_i \cap \partial^- R_{v_{i+1}} \subset \partial^- R_{v_{i+1}}$.

Let $P_c$ be the union of the images of $R_{v_i}$ and $Q_i$. Notice that $P_c$ is an immersed annulus with corners in $N(B)$, which contains $c$ and intersects the same leaves (of the oriented 1-dimensional foliation) as the images of $R_{v_i}$. $P_c$ is not strictly speaking a local section, since the $Q_i$ are tangent along the flow. One can perturb $P_c$ so that it becomes transverse to the flow everywhere, but since we ultimately only use the structure of $P_c$ after projecting to the orbit space, this is not necessary for our arguments. We will point out more features of $P_c$ in \Cref{subsec:pAproof}.

\subsection{Gluing along the unstable boundary} \label{subsec:unstableglue}

We will follow the same strategy to glue $N(B)$ along its boundary across its complementary regions in $M(s)$, so that the foliations on $N(B)$ descend to respective foliations on $M(s)$, for suitable $s$.

\begin{lemma} \label{lemma:unstableglue}
The components of $M \backslash \backslash B$ are (once-punctured cusped polygons)$\times S^1$. Fix one of these components $T$, $\partial T$ is naturally decomposed into a number of annulus faces meeting along cusp circles. $\Phi_{\red}$ on each annulus face consists of one or two oriented circles along with some branches attached inductively which exit through the cusp circles. Each oriented circle of $\Phi_{\red} \cap \partial T$ has isotopy class equal to the ladderpole slope, in particular they are parallel. See \Cref{fig:unstableglue} left. 
\end{lemma}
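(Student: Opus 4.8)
The plan is to combine the product structure from \Cref{prop:complbranchsurf} with the local picture of $\Phi_{red}$ developed in \Cref{subsec:stableglue,subsec:wall}.

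First I would recall, from \Cref{prop:complbranchsurf}, that each component $T$ of $M \backslash \backslash B$ is homeomorphic to a once-punctured cusped polygon times $S^1$, with the cusp circles representing the ladderpole curve and slope on the corresponding boundary torus of $\overline{M}$. Thus $\partial T$ is a union of annuli $A_1,\dots,A_n$ (one per edge of the polygon), each of the form (arc)$\times S^1$, meeting in pairs along the cusp circles (the cusp$\times S^1$), and the $S^1$ factor in each $A_k$ is a ladderpole curve. This is the decomposition into "annulus faces" asserted in the statement.

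Next I would identify $\Phi_{red} \cap \partial T$. Recall that $\Phi_{red}$ sits inside $B$, so $\Phi_{red} \cap \partial T$ sits inside $B \cap \partial T$, which is exactly the branch locus of $B$ restricted to $T$ — i.e. the collection of cusp circles. Actually one must be slightly more careful: $\Phi_{red}$ is embedded in $B$ via its top vertices at the diamond tops and its edges in the diamond interiors, and near a torus end the relevant picture is the one analyzed in the last part of \Cref{subsec:stableglue}, where one thickens the image of $\Phi_{red}$ on $\partial(M\backslash\backslash B)$ by flow boxes $\{|u|\le w'_w\lambda^{-t}\}$. The image of $\Phi_{red}$ on $\partial T$ is a train track: on each annulus face $A_k$ it consists of the oriented circle(s) carried by the ladderpole direction — at most two, one for each of the (at most two) boundary cycles of a wall that a given vertex can lie in, per the last paragraph of \Cref{subsec:wall} — together with the slanted branches of $\Phi_{red}$ entering/exiting through the cusp circles. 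These are precisely the branches that, in the neighborhood-of-a-wall picture of \Cref{lemma:redflowgraphcompl} and \Cref{fig:redflowgraphcompl}, attach as tongues; they zig-zag off toward the cusps. So on each annulus face we get one or two oriented circles with branches attached, as claimed, and since $B$'s branch locus meets each face of $\partial\Delta$ in at most one interval, only finitely many such branches appear and they are attached "inductively" (in the folding-sequence sense of \Cref{defn:branchsurf}).

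Finally, the isotopy class statement. Each oriented circle of $\Phi_{red}\cap A_k$ is, by construction, a boundary cycle $c_1$ or $c_{w+1}$ of some wall (the infinitesimal cycles $c_i$, $2\le i\le w$, have been deleted in passing to $\Phi_{red}$), or — when there is no wall — simply one of the loops coming from the branch locus opening up toward an ideal vertex, which by \Cref{defn:ladderpole} represents a ladderpole curve. In all cases, tracing the cycle around: it runs parallel to the $S^1$ factor of $A_k$, because the edges of $\Phi$ always go from a top edge into a bottom edge and the vertical direction in the diamond tiling of a wall (\Cref{subsec:wall}) is exactly the ladderpole direction on $\partial T$ — indeed the boundary cycles $c_i$ "lie within the quadrilateral tiling as vertical loops," and the quadrilateral tiling is carried by $B$ with its vertical loops projecting to the cusp circles of $\partial T$, hence to ladderpole curves. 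Therefore every such oriented circle has isotopy class the ladderpole slope, and in particular any two of them on $\partial T$ are parallel. This gives \Cref{fig:unstableglue} left and completes the proof.

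\textbf{Main obstacle.} The delicate point is the bookkeeping in the middle step: one must check that after the collapse along $N(B)\backslash\backslash N(\Phi_{red})$ performed in \Cref{subsec:stableglue}, the induced train track $\Phi_{red}\cap\partial T$ really has the stated form — at most two circles per annulus face, with branches only exiting through cusps and none running "the long way" around a face in a direction transverse to the ladderpole. This is controlled by the criss-cross structure of \Cref{lemma:redflowgraphcompl} and \Cref{rmk:stablegluesides} (tongues on a given side), together with the observation from \Cref{subsec:wall} that a vertex of $\Phi$ lies in at most two boundary cycles; assembling these into the clean statement is where the real work lies, though none of it is more than careful inspection of the figures.
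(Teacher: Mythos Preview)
Your argument has a genuine gap: you repeatedly identify the oriented circles of $\Phi_{red}\cap\partial T$ with boundary cycles of walls, and this identification is not justified. A circle of $\Phi_{red}$ lying on an annulus face of $\partial T$ is just a cycle of the reduced flow graph that happens to sit on $\partial(M\backslash\backslash B)$; there is no reason a priori that it must be a boundary cycle $c_1$ or $c_{w+1}$ of some wall, and the fact from \Cref{subsec:wall} that a vertex lies in at most two boundary cycles does not bound the number of such circles on a given annulus face. Likewise, your ladderpole-slope argument rests on this same identification (``the boundary cycles $c_i$ lie within the quadrilateral tiling as vertical loops''), so it does not go through either. You also do not address why there is at least one circle per annulus face.

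The paper's proof avoids walls entirely. For ``at most two circles'' it observes that $\Phi_{red}\cap A$ is an oriented train track on an annulus with only diverging switches, hence a union of parallel circles with trees attached exiting through $\partial A$; if there were three or more circles the innermost would have no outgoing branches at all, contradicting strong connectedness of $\Phi_{red}$. For ``at least one'' it notes that each cusp circle must meet $\Phi_{red}$, else some component of $B\backslash\backslash\Phi_{red}$ would carry a circle of the branch locus of $B$, contradicting \Cref{lemma:redflowgraphcompl}. For the ladderpole slope it invokes \cite[Lemma~5.4]{LMT20}, which shows every cycle of $\Phi\cap\partial T$ has the ladderpole slope; since $\Phi_{red}\subset\Phi$ this finishes the argument. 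If you want to salvage your approach, you would need an independent argument that every cycle of $\Phi_{red}$ lying on $\partial T$ is isotopic to a ladderpole curve, and that is essentially what \cite[Lemma~5.4]{LMT20} provides.
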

\begin{proof}
The first sentence is \Cref{prop:complbranchsurf}, and the second sentence follows easily. For the third sentence, note that $\Phi_{\red}$ on each annulus face $A$ of $\partial T$ is an oriented train track with branches exiting through the boundary and with only diverging switches. This forces $\Phi_{\red} \cap A$ to be a union of parallel circles with branches attached inductively which exit through the cusp circles. There cannot be more than 2 circles on each annulus or else the inner circles will have no branches attached. These will then contradict $\Phi_{\red}$ being strongly connected, since these circles can have no outgoing edges. Meanwhile, each cusp circle meets $\Phi_{\red}$, otherwise there will be a component of $B \backslash \backslash \Phi_{\red}$ carrying a circle in its branch locus, contradicting \Cref{lemma:redflowgraphcompl}. These two observations imply the third sentence.

To show the fourth sentence, it suffices to show that the oriented circles of $\Phi_{\red} \cap \partial T$ are oriented coherently. We will show this using \cite[Lemma 5.4]{LMT20}. (Again, we caution the reader that the flow graph in \cite{LMT20} is oriented in the opposite direction compared to this paper.) Their lemma implies that each cycle of $\Phi \cap \partial T$ has slope equal to the ladderpole slope. Since $\Phi_{\red}$ is a subgraph of $\Phi$, the same statement holds for $\Phi_{\red} \cap \partial T$. We remark that the third sentence of our lemma follows from their lemma as well.
\end{proof}

\begin{figure}
    \centering
    \resizebox{!}{6cm}{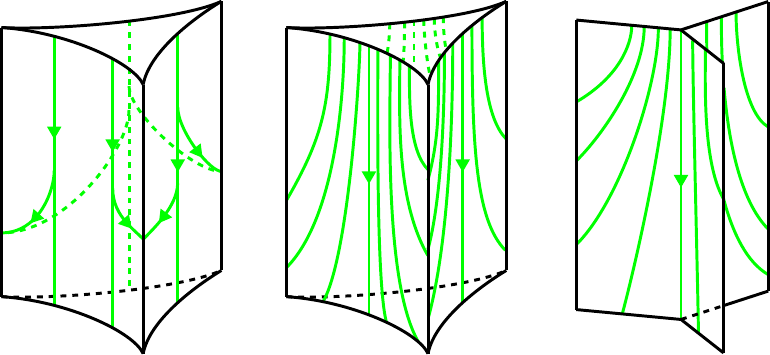}
    \caption{Collapsing across complementary regions of $M(s) \backslash \backslash N(B)$.}
    \label{fig:unstableglue}
\end{figure}

Recall that we can recover the oriented 1-dimensional foliation on the boundary of each component of $M \backslash \backslash N(B)$ from the portion of $\Phi_{\red}$ on the boundary of the corresponding component of $M \backslash \backslash B$, as described in \Cref{rmk:stableglueboundary}. From \Cref{lemma:unstableglue}, it follows that the foliation on each annulus will consist of a band of $S^1$ leaves, with all other leaves spiralling out of the band and exiting through the cusp circles. Using the transverse measure $du$ on the rectangles $\{ (u,t): |u| \leq w'_w \lambda^{-t}, t \in [0,1] \}$, one can see that the band of $S^1$ leaves actually just consists of one leaf. For if the band has nonzero width, pick a local section to the foliation within the band and observe that the backward return map of the foliation has to be contracting, providing a contradiction. See \Cref{fig:unstableglue} middle.

Now fix a component $T$ of $M \backslash \backslash N(B)$, fix an identification $T \cong$ (punctured cusped $k$-gon)$\times S^1$ and take the orientation on the punctured cusped $k$-gon to be the one induced from that of $T \subset M$ and the ladderpole slope. Label the annulus faces of $\partial T$ as $A^{(1)},...,A^{(k)}$ cyclically according to the orientations we have chosen. Suppose a slope $s$ is given on the torus end of $T$, whose geometric intersection number with the ladderpole class $l$ is at least $1$. We want to produce a `pronged $I$-fibering' of $T(s)$ preserving the leaves on $A^{(j)}$. By this we mean a decomposition of $T(s)$ into intervals and $|\langle s,l \rangle|$-prongs, so that the endpoints of the prongs lie along the unique $S^1$ leaves on the $A^{(j)}$, and so that the homeomorphisms on the halves of $A^{(j)}$ induced from the interval fibers preserve the leaves of the oriented 1-dimensional foliations.

Let $m$ be the meridian of $T$ in the description $T \cong$ (punctured cusped $k$-gon)$\times S^1$, i.e. the isotopy class of $\partial$(punctured cusped $k$-gon)$\times \{t_0\}$. We pick $\frac{|\langle s,l \rangle|}{k}$ local sections $J^{(j)}_1,...,J^{(j)}_{\frac{|\langle s,l \rangle|}{k}}$ near the $S^1$ leaf on $A^{(j)}$, so that we have backward return maps $J^{(j)}_i \to J^{(j)}_{i+1}$ (sub-indices taken mod $\frac{|\langle s,l \rangle|}{k}$) of the oriented foliation. Let $s^{(j)}_i$ be where $J^{(j)}_i$ meets the $S^1$ leaf on $A^{(j)}$. First define the fibering on the cusp circles of $T(s)$, where it is uniquely determined and degenerate, then extend the fibering over subintervals of leaves that start at the interior of $J^{(j)}_i$ and end on the cusp circles, using the fact that the union of these trajectories are unions of rectangles foliated as products. Finally apply \Cref{lemma:expandextend} to $I_i=$ subinterval of $J^{(j)}_i$ to the left of $s^{(j)}_i$ and $I'_i=$ subinterval of $J^{(j+1)}_{i\pm \langle s,m \rangle}$ to the right of $s^{(j+1)}_{i\pm \langle s,m \rangle}$ for each $j$ (mod $k$), where the $\pm$ is taken to be the sign of $\langle s,l \rangle$, to produce the pronged $I$-fibering in the remaining subset of $T(s)$. 

Now given a collection of slopes $s$ on each torus end of $M$ with $|\langle s,l \rangle| \geq 1$, collapse along the fibers of the pronged $I$-fibering in each corresponding component of $M(s) \backslash \backslash N(B)$. See \Cref{fig:unstableglue} right for an illustration of a collapsed component of $M(s) \backslash \backslash N(B)$ with $|\langle s,l \rangle|=3$. The oriented 1-dimensional foliation on $N(B)$ descends to a decomposition of $M(s)$, which is a genuine oriented 1-dimensional foliation, since all the branching of the leaves in $N(B)$ occurs along the branch locus of $N(B)$, and we have collapsed all backward trajectories ending at those points. In particular this oriented 1-dimensional foliation can be continuously parametrized into a topological flow. 

For each torus end of $M$, call the image of the $S^1$ leaves on $A^{(j)}$ after collapsing the \textit{core orbit} of the corresponding end.

\subsection{Showing pseudo-Anosovity and no perfect fits} \label{subsec:pAproof}

We have constructed our flow at this point. It remains to check that it satisfies the properties described in \Cref{thm:vtpAflow}. We first recall the relevant definitions.

\begin{defn} \label{defn:phorbit}
Consider the map $\begin{pmatrix} \lambda^{-1} & 0\\ 0 & \lambda \end{pmatrix}: \mathbb{R}^2 \to \mathbb{R}^2$. This preserves the foliations of $\mathbb{R}^2$ by horizontal and vertical lines. Let $\phi_{n,0,\lambda}:\mathbb{R}^2 \to \mathbb{R}^2$ be the lift of this map over $z \mapsto z^{\frac{n}{2}}$ that preserves the lifts of the quadrants. (When $n$ is odd, one has to choose a branch of $z \mapsto z^{\frac{n}{2}}$ but it is easy to see that the result is independent of the choice.) Let $\phi_{n,k, \lambda}: \mathbb{R}^2 \to \mathbb{R}^2$ be the composition of $\phi_{n,0,\lambda}$ and rotating by $\frac{2\pi k}{n}$ anticlockwise. Also pull back the foliations of $\mathbb{R}^2$ by horizontal and vertical lines. The resulting two singular foliations are preserved by $\phi_{n,k}$, call them $l^s, l^u$ respectively. Let $\Phi_{n,k,\lambda}$ be the mapping torus of $\phi_{n,k, \lambda}$, $\Lambda^s, \Lambda^u$ be suspensions of $l^s, l^u$ respectively, and consider the suspension flow on $\Phi_{n,k, \lambda}$. Call the suspension of the origin the \textit{pseudo-hyperbolic orbit} of $\Phi_{n,k,\lambda}$. The local behaviour of the flow near the pseudo-hyperbolic orbit serves as the local model for singular orbits in a pseudo-Anosov flow.
\end{defn}

\begin{defn} \label{defn:pAflow}
A \textit{smooth pseudo-Anosov flow} on a closed smooth 3-manifold $N$ is a continuous flow $\phi^t$, along with a path metric $d$ on $N$, which is induced from a Riemannian metric $g$ away from the singular orbits, satisfying:
\begin{itemize}
    \item [(S1)] There is a finite collection of closed orbits $\{\gamma_1, ..., \gamma_s \}$, called the \textit{singular orbits} such that $\phi^t$ is smooth away from the singular orbits.
    \item [(S2)] Away from the singular orbits, there is a splitting of the tangent bundle into three $\phi^t$-invariant line bundles $TM=E^s \oplus E^u \oplus T\phi^t$ such that $$|d\phi^t(v)| < C \lambda^{-t} |v|$$ for every $v \in E^s, t>0$, and $$|d\phi^t(v)| < C \lambda^t |v|$$ for every $v \in E^u, t<0$, for some $C, \lambda>1$.
    \item [(S3)] Each singular orbit $\gamma_i$ has a neighborhood $N_i$ and a map $f_i$ sending $N_i$ to a neighborhood of the pseudo-hyperbolic orbit in $\Phi_{n_i, k_i, \lambda}$, for some $n_i \geq 3$, such that
    \begin{itemize}
        \item $f_i$ is bi-Lipschitz on $N_i$ and smooth away from $\gamma_i$,
        \item $f_i$ preserves the orbits, and
        \item $f_i$ sends $E^s, E^u$ to line bundles tangent to $\Lambda^s, \Lambda^u$ respectively.
    \end{itemize}
    In this case, we say that $\gamma_i$ is \textit{$n_i$-pronged}.
\end{itemize}

A \textit{topological pseudo-Anosov flow} on a closed 3-manifold $N$ is a continuous flow $\phi^t$ satisfying:
\begin{itemize}
    \item [(T1)] There is a finite collection of closed orbits $\{\gamma_1, ..., \gamma_s \}$, called the \textit{singular orbits}, and two singular 2-dimensional foliations $\Lambda^s, \Lambda^u$, called the stable and unstable foliations respectively, which are non-singular away from the singular orbits.
    \item [(T2)] Away from the singular orbits, every point has a neighborhood which is a \textit{flow box}, i.e. a set of the form $I_s \times I_u \times I_t$ such that the flow lines are the lines with constant $s$ and $u$, the stable and unstable foliations are the foliations by planes with constant $u$ or $s$ coordinate respectively.
    \item [(T3)] There is a \textit{Markov partition}, i.e. there is a finite collection of flow boxes $\{ I^{(i)}_s \times I^{(i)}_u \times I_t \}_i$ covering $N$ with disjoint interiors, such that $$(I^{(i)}_s \times I^{(i)}_u \times \{1\}) \cap (I^{(j)}_s \times I^{(j)}_u \times \{0\}) = \bigcup_k J^{(ij.k)}_s \times I^{(i)}_u \times \{1\} = \bigcup_k I^{(j)}_s \times J^{(ji.k)}_u \times \{0\}$$ for some finite collection of subintervals $J^{(ij,k)}_s \subset I^{(i)}_s$ and $J^{(ji,k)}_u \subset J^{(j)}_u$.
    \item [(T4)] Pick a path metric $d$ on $M$. For every $p,q$ on the same stable leaf, there exists an orientation-preserving homeomorphism $T:(-\infty,\infty) \to (-\infty,\infty)$ such that $$\lim_{t \to \infty} d_N(\phi^t(p),\phi^{T(t)}(q))=0$$ Respectively, for every $p,q$ on the same unstable leaf, there exists an orientation-preserving homeomorphism $T:(-\infty,\infty) \to (-\infty,\infty)$ such that $$\lim_{t \to -\infty} d_N(\phi^t(p),\phi^{T(t)}(q))=0$$ 
    \item [(T5)] Each singular orbit $\gamma_i$ has a neighborhood $N_i$ and a continuous map $f_i$ sending $N_i$ to a neighborhood of the pseudo-hyperbolic orbit in $\Phi_{n_i, k_i, \lambda}$, for some $n_i \geq 3, \lambda>1$, such that
    \begin{itemize}
        \item $f_i$ preserves the orbits, and
        \item $f_i$ preserves $\Lambda^s, \Lambda^u$ on the two sets.
    \end{itemize}
    In this case, we say that $\gamma_i$ is \textit{$n_i$-pronged}.
\end{itemize}
\end{defn}

\begin{defn}
A (smooth/topological) pseudo-Anosov flow is \textit{transitive} if it has a dense orbit.
\end{defn}

It has long been a folklore fact that the notions of smooth and topological pseudo-Anosov flows are essentially equivalent in the case when the flows are transitive. The easier direction is that a transitive smooth pseudo-Anosov flow is a topological pseudo-Anosov flow. Indeed, the only nontrivial facts to check are that $E^s \oplus T\phi^t$ and $E^u \oplus T\phi^t$ are integrable (away from singular orbits), which follows from stable manifold theory (see for example, \cite[Chapter 17.4]{KH95}), and that there exists a Markov partition, which follows from the arguments in \cite[Section 2]{Rat73}. For the other direction, we have

\begin{thm} \label{thm:top2smooth}
Given a transitive topological pseudo-Anosov flow $\phi^t$ on a closed 3-manifold $N$, there exists a homeomorphism $F:N \to N$ and a smooth pseudo-Anosov flow $\hat{\phi^t}$ (with respect to some smooth structure on $N$), such that $F$ maps the trajectories of $\phi^t$ to that of $\hat{\phi^t}$ (preserving their orientations), i.e. $\phi^t$ is \textit{$C^0$-orbit equivalent} to $\hat{\phi^t}$.
\end{thm}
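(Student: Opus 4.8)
\emph{Proof idea.} The plan is to follow the strategy of Mario Shannon's thesis, which proves the analogous statement for transitive topological Anosov flows, and to adapt it to allow singular orbits. The guiding principle is that a transitive topological pseudo-Anosov flow, up to $C^0$-orbit equivalence, is determined by a finite combinatorial object — essentially the geometric type of a Markov partition together with the prong data at the singular orbits — and that any such object is realized by a smooth pseudo-Anosov flow. So the proof splits into an analytic half (extract this datum from $\phi^t$ and prove it is a complete orbit-equivalence invariant) and a synthetic half (realize the datum by a smooth model); applying the rigidity statement to $\phi^t$ and the model then finishes the argument.

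\textbf{Analytic half.} First I would record that a transitive topological pseudo-Anosov flow $\phi^t$ is expansive in the sense of Bowen--Walters: this follows from the flow box structure together with the asymptotic conditions on the stable and unstable leaves in \Cref{defn:pAflow}, exactly as in the smooth case. Combined with the Markov partition $\{I^{(i)}_s \times I^{(i)}_u \times I_t\}$ that is part of the definition, one obtains a surjective semiconjugacy $\pi\colon \Sigma \to N$ from the suspension of a subshift of finite type $\sigma$ — irreducible, by transitivity, and with roof function we may take identically $1$ after reparametrizing — onto $(N,\phi^t)$, injective off a flow-saturated, codimension-one ``boundary'' set, and carrying the horizontal and vertical foliations of $\Sigma$ to $\Lambda^s$ and $\Lambda^u$. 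I would then package the nesting data of the subintervals $J^{(ij,k)}_s \subset I^{(i)}_s$, $J^{(ji,k)}_u \subset I^{(j)}_u$ of the Markov partition, together with the cyclic prong data $(n_i,k_i)$ at the singular orbits $\gamma_i$, into a finite combinatorial datum $D(\phi^t)$. The rigidity statement to prove is then: two transitive (topological or smooth) pseudo-Anosov flows with the same datum $D$ are $C^0$-orbit equivalent. Here $D$ determines $\Sigma$ together with all identifications needed to recover $N$ as a quotient of $\Sigma$, and expansiveness forces these identifications, so that the (tautological) conjugacy of the underlying shifts descends to a homeomorphism of the quotients intertwining the two flows up to reparametrization — a Bowen--Walters / Adler--Weiss style argument, with the delicate points being the behaviour along the boundary leaves of the Markov partition and near the singular orbits, where the local pseudo-hyperbolic model $\Phi_{n_i,k_i,\lambda}$ is used to pin the identification down.

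\textbf{Synthetic half.} Finally I would construct a smooth pseudo-Anosov flow realizing the datum $D = D(\phi^t)$. Away from the singular orbits this is Shannon's construction: take one flow box $I_s \times I_u \times I_t$ per symbol of $\sigma$, carrying the product flow in the $I_t$-direction and a splitting $E^s \oplus E^u \oplus T\phi^t$ with the uniform exponential rates of \Cref{defn:pAflow}; glue top faces to bottom faces according to the transition data of $\sigma$ and the nesting data recorded in $D$; then smooth the gluing loci (in the flow and unstable directions) while leaving the splitting and the exponential estimates intact, using irreducibility of $\sigma$ to get transitivity. At each $n_i$-pronged orbit, replace the corresponding piece of the gluing by a neighborhood of the pseudo-hyperbolic orbit in $\Phi_{n_i,k_i,\lambda}$, which supplies the required bi-Lipschitz, orbit- and foliation-preserving chart $f_i$. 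Applying the rigidity statement to $\phi^t$ and this smooth model completes the proof.

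\textbf{Main obstacle.} The crux is the smoothing in the synthetic half: arranging the gluing so that the resulting flow is genuinely $C^\infty$ off the singular orbits and carries a genuine continuous invariant splitting with the stated contraction and expansion, not merely a topological flow box decomposition. \Cref{rmk:kopell} and Kopell's lemma (\cite{Kop70}) already indicate the constraint — the stable holonomies of the gluing cannot be made smooth — so the smoothing must be carried out in the flow direction and the transverse unstable direction, with the stable direction controlled only topologically. A secondary difficulty, shared with the rigidity step, is keeping track of the prong data $(n_i,k_i)$ consistently: one must check that the number of prongs and the rotation number at a singular orbit are read off from the local combinatorics of the Markov partition around the corresponding periodic point, hence are recorded faithfully in $D$, and that the construction reproduces exactly these values.
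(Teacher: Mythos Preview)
Your proposal takes a genuinely different route from the paper, and in fact mischaracterizes Shannon's actual strategy. The paper (following \cite{Sha21}) does \emph{not} proceed via geometric types of Markov partitions and direct smoothing of glued flow boxes. Instead, the argument runs through \emph{Birkhoff sections}: by \cite{Bru95} (generalizing Fried's argument in \cite{Fri83}), a transitive topological pseudo-Anosov flow admits a Birkhoff section $\Sigma$; the first return map on $\Sigma^\circ$ extends to a pseudo-Anosov surface homeomorphism on $\overline{\Sigma}$, which by classical $2$-dimensional theory has a smooth representative; the suspension of this is then automatically a smooth flow on the mapping torus. One recovers $N$ from this mapping torus by excising cross-shaped neighborhoods of the surgery orbits and regluing with appropriate Dehn twists, as in Goodman's construction \cite{Goo83}, using cone fields to certify the pseudo-Anosov property of the result. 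The $C^0$-orbit equivalence with the original $\phi^t$ is obtained by building a Birkhoff section for the glued flow as well, checking the two return maps induce the same outer automorphism of $\pi_1(\Sigma)$, and invoking \cite[Theorem 3.9]{Sha21}.

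The virtue of this approach is precisely that it \emph{sidesteps} the obstacle you yourself flag as the crux: all the smoothness comes for free from the $2$-dimensional pseudo-Anosov theory and from the explicit local surgery models, and no direct smoothing of a Markov-box gluing is ever attempted. Your synthetic half, by contrast, leaves this point unresolved---``smooth the gluing loci while leaving the splitting and the exponential estimates intact'' is exactly the statement to be proved, and your own invocation of Kopell's lemma explains why it is not routine. Your analytic half (rigidity via the geometric type of a Markov partition) is a reasonable program with antecedents in the B\'eguin--Bonatti theory, but it too is only sketched; the paper's route replaces it with the concrete return-map comparison of \cite[Theorem 3.9]{Sha21}. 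In short: your overall architecture is plausible but not what Shannon actually does, and the step you correctly identify as the main obstacle is the one the Birkhoff-section approach was designed to avoid.
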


This theorem has been proven recently by Shannon in \cite{Sha21} in the case of transitive Anosov flows, i.e. when there are no singular orbits. His methods in fact generalize immediately to the general case of transitive pseudo-Anosov flows. We will explain more carefully how to apply his arguments to prove \Cref{thm:top2smooth} in \Cref{subsec:smooth}. We remark that it is still open whether \Cref{thm:top2smooth} is true without the hypothesis of transitivity. Also see \cite[Section 3.1]{Mos96} for a related discussion. 

Before we recall the definition of no perfect fits, we need a nontrivial fact: Suppose a closed 3-manifold $N$ admits a (smooth/topological) pseudo-Anosov flow. Lift everything up to the universal cover $\widetilde{N}$. It is shown in \cite[Proposition 4.1]{FM01} that the orbit space $\mathcal{O}$ of the flow on $\widetilde{N}$ is homeomorphic to $\mathbb{R}^2$, and inherits two (possibly singular) 1-dimensional foliations $\mathcal{O}^s, \mathcal{O}^u$.

\begin{rmk}
\cite{FM01} only deals with smooth pseudo-Anosov flows, but given \Cref{thm:top2smooth}, the facts stated above holds for topological pseudo-Anosov flows as well. Alternatively, the proof of \cite[Proposition 4.1]{FM01} is valid verbatim for topological pseudo-Anosov flows.
\end{rmk}

We will in fact generalize the definition of no perfect fits slightly, compared to the usual definition found in, for example, \cite{Fen12}.

\begin{defn} \label{defn:perfectfit}
Let $\phi$ be a pseudo-Anosov flow on a closed 3-manifold $N$, and let $\{c_1,...,c_k\}$ be a collection of closed orbits of $\phi$. Lift these up to a flow $\widetilde{\phi}$ on the universal cover $\widetilde{N}$ together with a collection of orbits $\{ \widetilde{c_i} \}$ which are the preimages of $\{c_i\}$.

A \textit{perfect fit rectangle} is a rectangle-with-one-ideal-vertex properly embedded in $\mathcal{O}$ such that 2 opposite sides of the rectangle lie along leaves of $\mathcal{O}^s$ and the remaining 2 opposite sides lie along leaves of $\mathcal{O}^u$, and such that the restrictions of $\mathcal{O}^s$ and $\mathcal{O}^u$ to the rectangle foliate it as a product, i.e. conjugate to the foliations of $[0,1]^2 \backslash \{(1,1)\}$ by vertical and horizontal lines. See \Cref{fig:perfectfitdefn}. 

The collection of orbits $\{ \widetilde{c_i} \}$ determines a set $\mathcal{C}$ in $\mathcal{O}$. We will say that $\phi$ has no perfect fits relative to $\{c_1,...,c_k \}$ if there are no perfect fit rectangles in $\mathcal{O}$ disjoint from $\mathcal{C}$.
\end{defn}

\begin{figure}
    \centering
    \fontsize{18pt}{18pt}\selectfont
    \resizebox{!}{4cm}{
\begingroup%
  \makeatletter%
  \providecommand\color[2][]{%
    \errmessage{(Inkscape) Color is used for the text in Inkscape, but the package 'color.sty' is not loaded}%
    \renewcommand\color[2][]{}%
  }%
  \providecommand\transparent[1]{%
    \errmessage{(Inkscape) Transparency is used (non-zero) for the text in Inkscape, but the package 'transparent.sty' is not loaded}%
    \renewcommand\transparent[1]{}%
  }%
  \providecommand\rotatebox[2]{#2}%
  \newcommand*\fsize{\dimexpr\f@size pt\relax}%
  \newcommand*\lineheight[1]{\fontsize{\fsize}{#1\fsize}\selectfont}%
  \ifx\svgwidth\undefined%
    \setlength{\unitlength}{231.3339041bp}%
    \ifx\svgscale\undefined%
      \relax%
    \else%
      \setlength{\unitlength}{\unitlength * \real{\svgscale}}%
    \fi%
  \else%
    \setlength{\unitlength}{\svgwidth}%
  \fi%
  \global\let\svgwidth\undefined%
  \global\let\svgscale\undefined%
  \makeatother%
  \begin{picture}(1,0.81198695)%
    \lineheight{1}%
    \setlength\tabcolsep{0pt}%
    \put(0,0){\includegraphics[width=\unitlength,page=1]{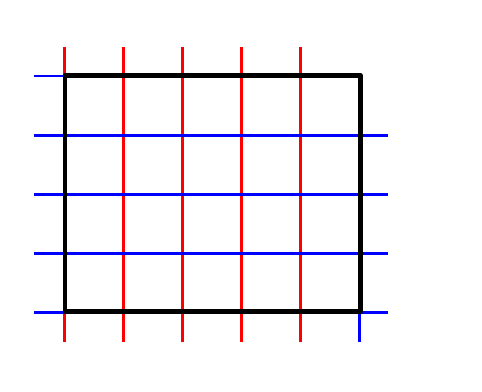}}%
    \put(0.81560745,0.37395544){\color[rgb]{0,0,0}\makebox(0,0)[lt]{\lineheight{1.25}\smash{\begin{tabular}[t]{l}$F$\end{tabular}}}}%
    \put(0.39449472,0.73215106){\color[rgb]{0,0,0}\makebox(0,0)[lt]{\lineheight{1.25}\smash{\begin{tabular}[t]{l}$G$\end{tabular}}}}%
    \put(-0.00785188,0.36925456){\color[rgb]{0,0,0}\makebox(0,0)[lt]{\lineheight{1.25}\smash{\begin{tabular}[t]{l}$H$\end{tabular}}}}%
    \put(0.38866443,0.01828728){\color[rgb]{0,0,0}\makebox(0,0)[lt]{\lineheight{1.25}\smash{\begin{tabular}[t]{l}$K$\end{tabular}}}}%
    \put(0,0){\includegraphics[width=\unitlength,page=2]{perfectfitdefn.pdf}}%
  \end{picture}%
\endgroup%
}
    \caption{A perfect fit rectangle.}
    \label{fig:perfectfitdefn}
\end{figure}

Given a perfect fit rectangle with sides $F,H$ along leaves of $\mathcal{O}^s$, sides $G,K$ along leaves of $\mathcal{O}^u$, and sides $F,G$ adjacent to the ideal vertex, notice that we can always choose a smaller perfect fit rectangle with sides $F,G,H',K'$ where $H'$ is closer to $F$ than $H$ and $K'$ is closer to $G$ than $K$. We will frequently use this observation when analyzing perfect fit rectangles.

A pseudo-Anosov flow is without perfect fits according to the definition in \cite{Fen12} if and only if it is without perfect fits relative to $\varnothing$ in \Cref{defn:perfectfit}. This is in turn equivalent to the pseudo-Anosov flow having no perfect fits relative to the set of its singular orbits, since by definition a perfect fit rectangle cannot contain a singular point in its interior, and we can always choose a smaller perfect fit rectangle that does not contain any singular points on the boundary as well.

It is immediate from their definitions that the property of transitivity and no perfect fits relative to a collection of orbits is preserved under $C^0$-orbit equivalence. Hence in view of \Cref{thm:top2smooth}, it suffices for us to show \Cref{thm:vtpAflow} with `pseudo-Anosov flow' taken to mean topological pseudo-Anosov flow.

We first verify that our flow on $M(s)$ is a topological pseudo-Anosov flow. For (T1), take the collection of singular orbits to be the core orbits of the ends of $M$ with $|\langle s,l \rangle| \geq 3$, and take $\Lambda^s$ and $\Lambda^u$ to be the stable and unstable foliations on $M(s)$ respectively.

For (T2), one can construct flow box neighborhoods (away from the singular orbits) in $M(s)$ by piecing together those in $N(\Phi_{\red})$.

We will call the images of the flow boxes $Z_e$ in $M(s)$ after collapsing $N(B) \backslash \backslash N(\Phi_{\red})$ and $M(s) \backslash \backslash N(B)$ by the same name. These form a Markov partition for the flow by definition, verifying (T3).

Take $d_{M(s)}$ to be the path metric on $M(s)$ induced by the Euclidean metrics $d^{\mathrm{Eucl}}_{Z_e}$ on $Z_e$. This means that we define the length of a path by summing over the $d^{\mathrm{Eucl}}_{Z_e}$-lengths of the portions within each $Z_e$, and define the distance between two points to be the infimum length of paths between them. Here for a path lying on more than one $Z_e$, one is allowed to calculate its length by the Euclidean metric on any of them. Note that since each $Z_e$ is compact, the identity map on $Z_e$ is uniformly continuous with respect to $d^{\mathrm{Eucl}}_{Z_e}$ and $d_{M(s)}|_{Z_e}$.

Let us prove (T4) in the case when $p,q$ lie on the same stable leaf; the proof for when $p,q$ lie on the same unstable leaf is symmetric. Notice that it suffices to prove this when $p,q$ are close to each other, so we can assume that $p,q$ lie in the same flow box $Z_e$. We parametrize the orbits through $p$ and $q$ so that they go through each flow box in unit time. Then we would have $$d^{\mathrm{Eucl}}_{Z_e}(\phi^t(p),\phi^t(q))<C \lambda^{-t}$$ whenever $\phi^t(p)$ and $\phi^t(q)$ lie in the flow box $Z_e$ (where $C$ depends on $p,q$). So (T4) follows from uniform continuity between $d^{\mathrm{Eucl}}_{Z_e}$ and $d_{M(s)}$.

For (T5), the neighborhoods $N_i$ can be constructed by piecing up the flow box neighborhoods in $N(\Phi_{\red})$ again. We then have to find conjugations $f_i$ between $N_i$ and neighborhoods of the pseudo-hyperbolic orbits in $\Phi_{n_i,k_i,\lambda}$ for $n_i=|\langle s_i,l_i \rangle|$. These can be constructed by choosing local sections to the flow in $N_i$ and in $\Phi_{n_i,k_i,\lambda}$, then applying \Cref{lemma:expandextend} to the prongs that are the intersections of these local sections with the singular stable and unstable leaves, taking $f$ there to be an arbitrary homeomorphism. This constructs $f_i$ on the singular stable and unstable leaves on the local sections, then $f_i$ is uniquely determined on the local sections (after possibly shrinking them), by requiring that it preserves the stable and unstable foliations. Then we complete the construction of $f_i$ by following the trajectories as they go around $N_i$. Observe that $|\langle s, l \rangle| \geq 2$ is required here, otherwise we will have 1-pronged singular orbits. (Meanwhile `2-pronged singular orbits' are just non-singular orbits.)

We then show that the flow is transitive. Pick an infinite path in $\Phi_{\red}$ which contains all finite paths in $\Phi_{\red}$ as sub-paths. Such a path can be constructed using the fact that $\Phi_{\red}$ is strongly connected. Write the path as $v_1 \overset{e_1}{\to} v_2 \overset{e_2}{\to} ...$. We claim that there is an infinite flow line in $M(s)$ passing through $Z_{e_1},Z_{e_2},...$ in that order. To show this claim, consider the subset in $Z_{e_1}$ of points whose forward trajectories flow through $Z_{e_1},...,Z_{e_k}$. These subsets will be nonempty decreasing sub-flow boxes of $Z_{e_1}$, hence taking the intersection as $k \to \infty$, we get points in $Z_{e_1}$ whose flow lines have the required property. Pick one of these flow lines and denote it as $l$. Now for any point $y \in M(s)$, suppose the forward and backward trajectories of $y$ pass through $Z_{d_0}, Z_{d_1}, Z_{d_2}...$ and $Z_{d_0}, Z_{d_{-1}}, Z_{d_{-2}},...$ respectively. Note that these sequences may not be unique, due to the fact that leaves of the 1-dimensional foliation on $N(\Phi_{\red})$ have forward and backward branching, but we can always just make a choice. Now fix some $N>0$, the flow line $l$ passes through $Z_{d_{-N}},...,Z_{d_N}$ in that order at some point, by construction. Since both $l$ and the backward flow line through $y$ pass through $Z_{d_{-N}},...,Z_{d_0}$, they intersect $Z_{d_0}$ in flow lines not more than $w_{v_{-N}} \lambda^{-N}$ apart in the $d^{\mathrm{Eucl}}_{Z_{d_0}}$ metric. Similarly, looking in the forward direction, $l$ and the flow line through $y$ are not more than $w'_{w_N} \lambda^{-N}$ apart in the $d^{\mathrm{Eucl}}_{Z_{d_0}}$ metric. Hence we conclude that $l$ passes through a $(\max w_v+\max w'_v) \lambda^{-N}$ $d^{\mathrm{Eucl}}_{Z_{d_0}}$-neighborhood of $y$. Letting $N \to \infty$ and using uniform continuity between $d^{\mathrm{Eucl}}_{Z_{d_0}}$ and $d_{M(s)}$, this proves transitivity.

Finally we prove that the flow has no perfect fits relative to the core orbits, which we denote as $c_i$. To do this we will make use of the surfaces $P_c$ constructed in \Cref{subsec:unstableglue}. There, we defined these as subsets of $N(B)$, but here we will consider their images in $M(s)$ after collapsing $M \backslash \backslash N(B)$ and still refer to them by the same name. Consider the lifts of $P_c$ to the universal cover $\widetilde{M(s)}$. We denote these as $\widetilde{P_{\widetilde{c}}}$, where $\widetilde{c}$ ranges over components of the branch locus of $\widetilde{B}$ that are lifts of $c$, which we will conflate with the corresponding components of the branch locus of $\widetilde{N(B)}$. After collapsing $\widetilde{M(s)} \backslash \backslash \widetilde{N(B)}$, each $\widetilde{c} \subset \widetilde{M(s)}$ lies on a leaf of $\widetilde{\Lambda^u}$ and is transverse to the flow, meeting exactly those flow lines which spiral out of the lift of the core orbit produced by collapsing the component of $\widetilde{M(s)} \backslash \backslash \widetilde{N(B)}$ which $\widetilde{c}$ opens up towards. Moreover, as one goes along $\widetilde{c}$ in its orientation lifted from $c$, we meet flow lines that spiral closer to the lift of the core orbit. Hence $\widetilde{c}$ projects to an open half leaf of $\mathcal{O}^u$ on $\mathcal{O}$, which under the orientation of $\widetilde{c}$, limits to the lift of the core orbit (as a point in $\mathcal{O}$) in the forwards direction and escapes to infinity in $\mathcal{O}$ in the backwards direction. Since the projection is a homeomorphism, we will call this open half leaf $\widetilde{c}$ as well for convenience.

Meanwhile, each $\widetilde{P_{\widetilde{c}}}$ projected to $\mathcal{O}$ will be a surface containing $\widetilde{c}$. More precisely, recall that $P_c$ is the union of the images of $R_{v_i}$ and $Q_i$. Let $\widetilde{R_{v_i}} \subset \widetilde{\Phi_{\red}}$ be the lifts of $R_{v_i}$ that meet $\widetilde{c}$ and $\widetilde{Q_i} \subset \widetilde{N(B)}$ the lifts of $Q_i$ that meet $\widetilde{c}$. Then $\widetilde{P_c}$ is the union of the images of $\widetilde{R_{v_i}}$ and $\widetilde{Q_i}$ after collapsing $\widetilde{N(B)} \backslash \backslash \widetilde{N(\Phi_{\red})}$ and $\widetilde{M(s)} \backslash \backslash \widetilde{N(B)}$. The image of each $\widetilde{R_{v_i}}$ will be projected homeomorphically to a rectangle in $\mathcal{O}$ along $\widetilde{c}$ since these are transverse to the flow, while the image of each $\widetilde{Q_i}$ is tangent to the flow hence will be projected into sides of the rectangles.

The way these rectangles fit together is described by the following lemma.

\begin{lemma} \label{lemma:pcstructure}
The projection of each $\widetilde{P_{\widetilde{c}}}$ is homeomorphic to $\{(x,y) \in (-\infty,0) \times \mathbb{R}: r_1(x) \leq y \leq r_2(x)\}$ where $r_1$ ($r_2$, respectively) is a non-increasing (non-decreasing, respectively) negative (positive, respectively) piecewise constant function, $\widetilde{c}$ is sent to $(-\infty,0) \times \{0\}$ in the direction of increasing $x$, and the restriction of $\Lambda^s$ ($\Lambda^u$, respectively) are sent to the foliation by vertical (horizontal, respectively) lines. See \Cref{fig:stableglue3}.
\end{lemma}
\begin{proof}
The rectangles in $\mathcal{O}$ that are the projections of $\widetilde{R_{v_i}}$ have nonoverlapping interiors and cover $\widetilde{c}$, so the only thing that remains to be shown is the monotonicity of $r_1$ and $r_2$.

To show this, recall that $Q_i \cap \partial^+ R_{v_i} = \partial^+ R_{v_i}$ and $Q_i \cap \partial^- R_{v_{i+1}} \subset \partial^- R_{v_{i+1}}$. This implies that the line that is the image of $\widetilde{Q_i}$ is the right vertical side of the rectangle that is the projection of $\widetilde{R_{v_i}}$ and a subset of the left vertical side of the rectangle that is the projection of $\widetilde{R_{v_{i+1}}}$. In simpler terms, the rectangles get taller or are of equal height as one goes along $\widetilde{c}$, which establishes the lemma.
\end{proof}

For convenience, from now on let us write $\widetilde{P_{\widetilde{c}}}$ and $\widetilde{R_{v_i}}$ for their images in $\mathcal{O}$ as well.

To establish no perfect fits, we will basically argue that if there is a perfect fit rectangle in $\mathcal{O}$ disjoint from the images of $\widetilde{c_i}$, then we can reduce it to sit inside one of these $\widetilde{P_{\widetilde{c}}}$ with the ideal vertex at a corner of the rectangle with larger $x$ coordinate. But from the monotonicity of $r_1$ and $r_2$, such a perfect fit rectangle cannot be properly embedded in $\widetilde{P_{\widetilde{c}}}$, giving us a contradiction.

So suppose we have a perfect fit rectangle in $\mathcal{O}$ disjoint from the images of $\widetilde{c_i}$ with sides $F,H$ along leaves of $\mathcal{O}^s$, sides $G,K$ along leaves of $\mathcal{O}^u$, and sides $F,G$ adjacent to the ideal vertex. Lift $H$ to $\widetilde{M(s)}$. By flowing forward (and possibly rechoosing $K$ to be closer to $G$) we can assume that the lift of $H$ is short enough to be contained within the lift of some flow box, which we denote by $\widetilde{Z_e}$. In particular we can take the lift of $H$ to lie on the image of some $\widetilde{R_v}$. If the lift of $H$ does not meet the image of the branch locus of $\widetilde{N(B)}$ in its interior, i.e. it is fully contained in the bottom face of some $\widetilde{Z_e}$, then we can rechoose the lift of $H$ to be on the top face of $\widetilde{Z_e}$. When we do so the length of the lift (with respect to the Euclidean metric on the $\widetilde{Z_e}$'s) is increased by a factor of $\lambda$, so we cannot continue this operation indefinitely. In other words, we can choose a lift of $H$ which lies on the image of some $\widetilde{R_v}$ and whose interior meets the image of the branch locus of $\widetilde{N(B)}$.

Recall that the components of the branch locus of $\widetilde{N(B)}$ carry orientations induced from those of $B$. We split into two cases depending on how the lift of $H$ meets the image of the branch locus of $\widetilde{N(B)}$ with respect to this orientation. 

Case 1 is if the lift of $H$ meets the image of a component $\widetilde{c}$ of the branch locus oriented in the same direction as $K$ leaving $H$. In this case, by moving $K$ closer to $G$, we can suppose that $K$ lies within $\widetilde{c}$ on $\mathcal{O}$. This uses the assumption that the perfect fit rectangle is disjoint from the images of $\widetilde{c_i}$, otherwise $K$ could go beyond $\widetilde{c}$ on the leaf of $\mathcal{O}^s$. Now by construction, $\widetilde{P_{\widetilde{c}}}$ in $\mathcal{O}$ contains both $H$ and $K$. In the description of \Cref{lemma:pcstructure}, the direction at which $K$ leaves $H$ is the direction of increasing $x$, so the perfect fit rectangle must sit inside $\widetilde{P_{\widetilde{c}}}$ and with the ideal vertex at a corner with larger $x$ coordinate. See \Cref{fig:perfectfitarg1}. As pointed out, this gives us a contradiction.

\begin{figure}
    \centering
    \resizebox{!}{4cm}{
\begingroup%
  \makeatletter%
  \providecommand\color[2][]{%
    \errmessage{(Inkscape) Color is used for the text in Inkscape, but the package 'color.sty' is not loaded}%
    \renewcommand\color[2][]{}%
  }%
  \providecommand\transparent[1]{%
    \errmessage{(Inkscape) Transparency is used (non-zero) for the text in Inkscape, but the package 'transparent.sty' is not loaded}%
    \renewcommand\transparent[1]{}%
  }%
  \providecommand\rotatebox[2]{#2}%
  \newcommand*\fsize{\dimexpr\f@size pt\relax}%
  \newcommand*\lineheight[1]{\fontsize{\fsize}{#1\fsize}\selectfont}%
  \ifx\svgwidth\undefined%
    \setlength{\unitlength}{178.26761197bp}%
    \ifx\svgscale\undefined%
      \relax%
    \else%
      \setlength{\unitlength}{\unitlength * \real{\svgscale}}%
    \fi%
  \else%
    \setlength{\unitlength}{\svgwidth}%
  \fi%
  \global\let\svgwidth\undefined%
  \global\let\svgscale\undefined%
  \makeatother%
  \begin{picture}(1,0.65516939)%
    \lineheight{1}%
    \setlength\tabcolsep{0pt}%
    \put(0,0){\includegraphics[width=\unitlength,page=1]{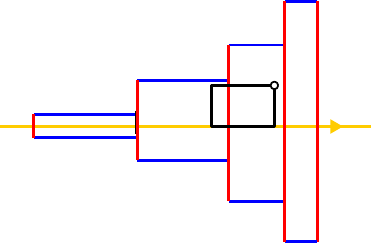}}%
    \put(0.63131425,0.25056032){\color[rgb]{0,0,0}\makebox(0,0)[lt]{\lineheight{1.25}\smash{\begin{tabular}[t]{l}$K$\end{tabular}}}}%
    \put(0.49752869,0.34618231){\color[rgb]{0,0,0}\makebox(0,0)[lt]{\lineheight{1.25}\smash{\begin{tabular}[t]{l}$H$\end{tabular}}}}%
  \end{picture}%
\endgroup%
}
    \caption{In case 1, the ideal vertex of the perfect fit rectangle must lie inside $\widetilde{P_{\widetilde{c}}}$, giving us a contradiction.}
    \label{fig:perfectfitarg1}
\end{figure}

Case 2 is if the lift of $H$ meets a component $\widetilde{c}$ of the branch locus oriented in the different direction as $K$ leaving $H$. As above, we can assume that $K$ lies within $\tilde{c}$ on $\mathcal{O}$. Recall that $\widetilde{P_{\widetilde{c}}}$ consists of a union of rectangles $\widetilde{R_{v_i}}$ along $\tilde{c}$, and $H$ is contained in one of these rectangles. Since $K$ is a finite interval, it only goes through finitely many $\widetilde{R_{v_i}}$, call this finite number the \textit{length} of $K$. Notice that it is sometimes possible to reduce the length of $K$ by moving $H$ closer to $F$ and from $\widetilde{R_{v_{i+1}}}$ to $\widetilde{R_{v_i}}$ while keeping it inside $\widetilde{P_{\widetilde{c}}}$. In any case, we can assume that the length of $K$ is the minimum possible value among all perfect fit rectangles that are disjoint from the images of $\widetilde{c_i}$ in $\mathcal{O}$.

If the length of $K$ is 1, then we have a perfect fit rectangle with $H$ and $K$ within a single rectangle $\widetilde{R_v}$. But then the perfect fit rectangle would not be properly embedded, so this is a contradiction.

Hence the remaining case is if we cannot move $H$ from $\widetilde{R_{v_{i+1}}}$ to $\widetilde{R_{v_i}}$ for some consecutive rectangles $\widetilde{R_{v_i}}, \widetilde{R_{v_{i+1}}}$ along $\widetilde{c}$. Let $\partial^+ \widetilde{R_{v_i}}$ be the side of $\widetilde{R_{v_i}}$ which $\widetilde{c}$ exits, and $\partial^- \widetilde{R_{v_{i+1}}}$ be the side of $\widetilde{R_{v_{i+1}}}$ which $\widetilde{c}$ enters. (This is consistent with the notation used in \Cref{subsec:stableglue}.) We can at least move $H$ onto the image of $\partial^- \widetilde{R_{v_{i+1}}}$ in $\mathcal{O}$ in this case. Then we can pick a lift of $H$ which lies in the image of the stable face of some component $\widetilde{K}$ of $\widetilde{N(B)} \backslash \backslash \widetilde{N(\Phi_{\red})}$ which contains $\partial^- \widetilde{R_{v_{i+1}}}$. The assumption that we cannot move $H$ to $\widetilde{R_{v_i}}$ means that an endpoint of $\partial^+ \widetilde{R_{v_i}}$ lies on a component of $\partial^u \widetilde{K}$ which meets the other stable face at a point whose image in $\widetilde{M(s)}$ has forward trajectory meeting this lift of $H$ in its interior. See \Cref{fig:perfectfitarg2} for an illustration of the situation.

We claim that the criss-cross property stated in \Cref{lemma:redflowgraphcompl} implies that at least one of the endpoints of $\partial^+ \widetilde{R_{v_i}}$ lies on a component of $\partial^u \widetilde{K}$ whose branch locus $L$ is oriented in the different direction as $\widetilde{c}$ and meets the stable face of $\widetilde{K}$ containing $\partial^- \widetilde{R_{v_{i+1}}}$ at a point $z$ on or above $\partial^- \widetilde{R_{v_{i+1}}}$. To check this, we let $J$ be the component of $B \backslash \backslash \Phi_{\red}$ corresponding to $K$, let $\widetilde{J}$ be the universal cover of $J$, and suppose that $\widetilde{c}$ lies along $y=-x$ in the branch locus of $\widetilde{J}$ under the model described in \Cref{lemma:redflowgraphcompl}. 

If $w \geq 2$, then the endpoint of $\partial^+ \widetilde{R_{v_i}}$ away from the side on which the tongue is attached along $y=-x$ to $\widetilde{J}$ lies on the component of $\partial^u \widetilde{K}$ whose branch locus corresponds to $y=x-\frac{2}{w}$, hence is oriented in the different direction as $\widetilde{c}$ and meets the stable face of $\widetilde{K}$ containing $\partial^- \widetilde{R_{v_{i+1}}}$ at a point strictly above $\partial^- \widetilde{R_{v_{i+1}}}$. This is the case that is illustrated in \Cref{fig:perfectfitarg2}.

If $w=1$ and if the tongue attached along $y=x-2$ is done so in the opposite side as that along $y=-x$, we have a similar conclusion: the endpoint of $\partial^+ \widetilde{R_{v_i}}$ away from the side on which the tongue is attached along $y=-x$ lies on the component of $\partial^u \widetilde{K}$ whose branch locus corresponds to $y=x-2$, hence is oriented in the different direction as $\widetilde{c}$, but now meeting the stable face of $\widetilde{K}$ containing $\partial^- \widetilde{R_{v_{i+1}}}$ at a point on $\partial^- \widetilde{R_{v_{i+1}}}$.

Finally, if $w=1$ and if the tongue attached along $y=x-2$ is done so in the same side as that along $y=-x$, then the endpoint of $\partial^+ \widetilde{R_{v_i}}$ on this common side lies on the component of $\partial^u \widetilde{K}$ whose branch locus corresponds to $y=x-2$, hence is oriented in the different direction as $\widetilde{c}$ and meets the stable face of $\widetilde{K}$ containing $\partial^- \widetilde{R_{v_{i+1}}}$ at a point on $\partial^- \widetilde{R_{v_{i+1}}}$.

Now, if, in the claim, the point $z$ lies on $\partial^- \widetilde{R_{v_{i+1}}}$ then we are in case 1 since the component of the branch locus on which $L$ in the claim lies on will meet the lift of $H$ and is oriented in the same direction as $K$ leaving $H$.

If $z$ is strictly above $\partial^- \widetilde{R_{v_{i+1}}}$, then up to moving $K$ closer to $G$, we can rechoose a lift of $H$ on the image of a higher $\widetilde{R_v}$ so that it meets a component of the branch locus of $\widetilde{N(B)}$ oriented in the different direction as $\widetilde{c}$, hence the same direction as $K$ leaving $H$. See \Cref{fig:perfectfitarg2}. Then we have reduced to case 1 again. Hence in any case, we see that the perfect fit rectangle cannot exist.

\begin{figure}
    \centering
    \fontsize{16pt}{16pt}\selectfont
    \resizebox{!}{12cm}{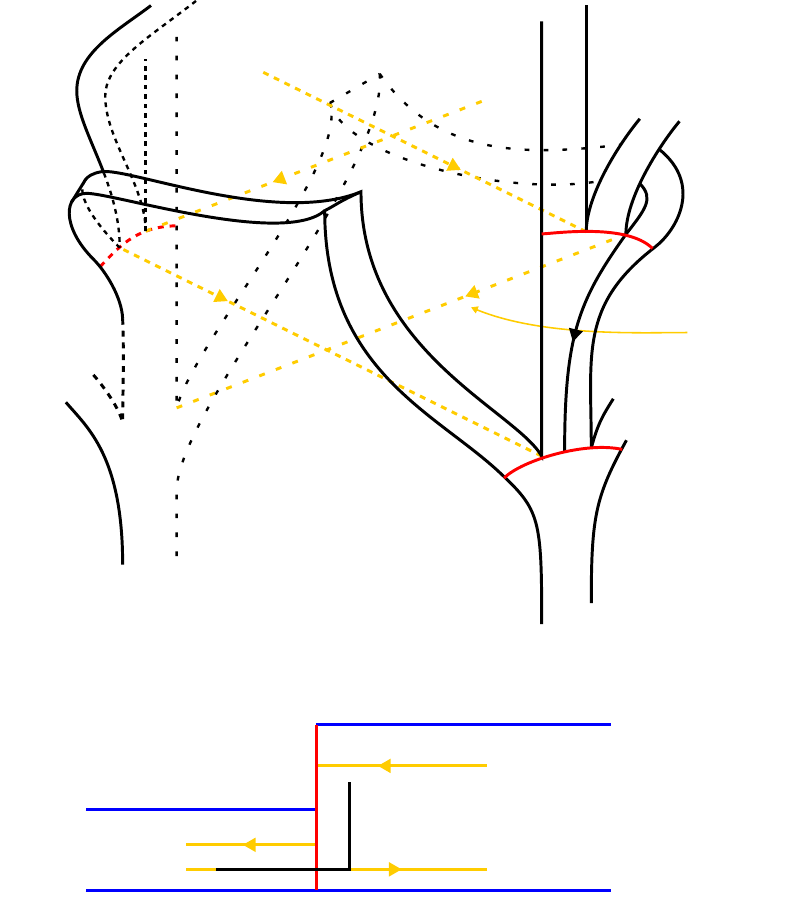}
    \caption{In case 2, up to possibly rechoosing a lift of $H$ that is higher up, we can assume that the lift of $H$ meets a component of the branch locus of $\widetilde{N(B)}$ oriented in the same direction as $K$ leaving $H$.}
    \label{fig:perfectfitarg2}
\end{figure}

\subsection{Proof of \Cref{thm:top2smooth}} \label{subsec:smooth}

By \cite{Bru95} and the standard fact that pseudo-Anosov flows are expansive (which can in turn be proved via a symbolic dynamics argument making use of a Markov partition), $\phi^t$ admits a Birkhoff section. Recall that this means there is an embedding of an oriented surface with boundary $\Sigma \hookrightarrow N$, such that $\partial \Sigma$ is a union of orbits, $\mathrm{int} \Sigma$ is positively transverse to $N$, and every point intersects $\Sigma$ in finite forward time. We briefly sketch an argument here.

For every point $x \in N$, take a local section to the flow near $x$. The local section is divided into $2n$ regions by the stable and unstable leaves containing $x$ (where $n=2$ unless $x$ lies on a singular orbit). Name them $A_1,B_1,...,A_n,B_n$ in a cyclic order. It is standard that the set of periodic orbits in a transitive pseudo-Anosov flow is dense. For example, this can again proved using a symbolic dynamics argument, similar to how we proved transitivity before. Use this fact to find non-singular periodic orbits $p_i$ passing through $A_i$. We can further assume that the stable and unstable leaves which $p_i$ lie on are orientable, since the set of periodic orbits satisfying this is dense as well. Encode the closed orbits $p_i$ by periodic symbolic sequences using a Markov partition, then concatenate these sequences to get a long periodic sequence which corresponds to a closed orbit $q$ in $N$ `shadowing' $p_1,...,p_n$, in that cyclic order. Now construct a $2n$-gon on the local section with vertices at where $p_1,q,p_2,q,...,p_n,q$ meet the local section. Here the multiple listings of $q$ refer to the multiple times $q$ meets the local section as it shadows the $p_i$'s. Connect up the edges $[q,p_k]$ and $[p_k,q]$ on the local section for each $k$ by following the flow. After perturbation, we get an immersed local Birkhoff section. See \Cref{fig:birkhoffsection}. Since $N$ is compact, we can take the union of finitely many such surfaces and have all points meet the union in finite forward time. Then we can perform some surgeries along the self-intersections to get a genuine Birkhoff section $\Sigma$. 

\begin{figure}
    \centering
    \fontsize{12pt}{12pt}\selectfont
    \resizebox{!}{8cm}{
\begingroup%
  \makeatletter%
  \providecommand\color[2][]{%
    \errmessage{(Inkscape) Color is used for the text in Inkscape, but the package 'color.sty' is not loaded}%
    \renewcommand\color[2][]{}%
  }%
  \providecommand\transparent[1]{%
    \errmessage{(Inkscape) Transparency is used (non-zero) for the text in Inkscape, but the package 'transparent.sty' is not loaded}%
    \renewcommand\transparent[1]{}%
  }%
  \providecommand\rotatebox[2]{#2}%
  \newcommand*\fsize{\dimexpr\f@size pt\relax}%
  \newcommand*\lineheight[1]{\fontsize{\fsize}{#1\fsize}\selectfont}%
  \ifx\svgwidth\undefined%
    \setlength{\unitlength}{411.26611671bp}%
    \ifx\svgscale\undefined%
      \relax%
    \else%
      \setlength{\unitlength}{\unitlength * \real{\svgscale}}%
    \fi%
  \else%
    \setlength{\unitlength}{\svgwidth}%
  \fi%
  \global\let\svgwidth\undefined%
  \global\let\svgscale\undefined%
  \makeatother%
  \begin{picture}(1,0.6113576)%
    \lineheight{1}%
    \setlength\tabcolsep{0pt}%
    \put(0,0){\includegraphics[width=\unitlength,page=1]{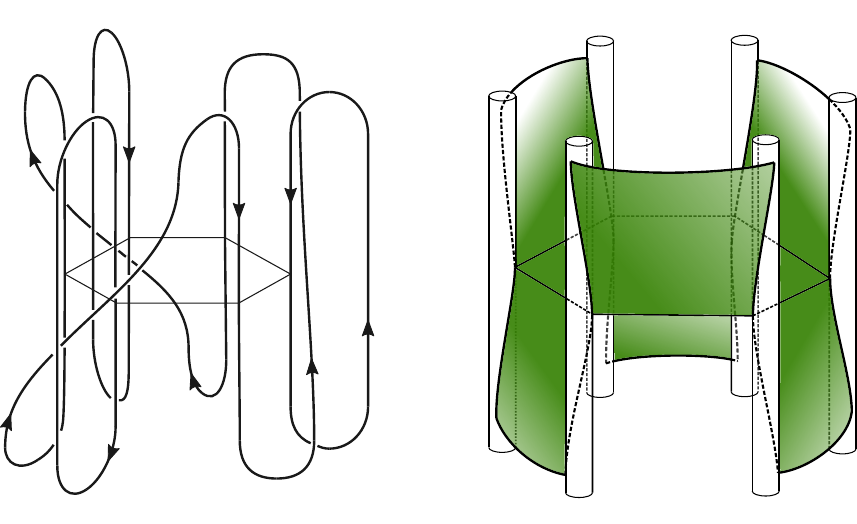}}%
    \put(0.08195817,0.00523011){\color[rgb]{0,0,0}\makebox(0,0)[lt]{\lineheight{1.25}\smash{\begin{tabular}[t]{l}$p_1$\end{tabular}}}}%
    \put(0.41711623,0.07718957){\color[rgb]{0,0,0}\makebox(0,0)[lt]{\lineheight{1.25}\smash{\begin{tabular}[t]{l}$p_2$\end{tabular}}}}%
    \put(0.11329534,0.59135114){\color[rgb]{0,0,0}\makebox(0,0)[lt]{\lineheight{1.25}\smash{\begin{tabular}[t]{l}$p_3$\end{tabular}}}}%
    \put(0.33961942,0.54724695){\color[rgb]{0,0,0}\makebox(0,0)[lt]{\lineheight{1.25}\smash{\begin{tabular}[t]{l}$q$\end{tabular}}}}%
  \end{picture}%
\endgroup%
}
    \caption{Left: pick a long periodic orbit $q$ which weaves around the shorter periodic orbits $p_1,...,p_n$. Right: take a $2n$-gon transverse to the flow with vertices at $p_1,q,...,p_n,q$ and connect up its sides to produce an immersed local Birkhoff section.}
    \label{fig:birkhoffsection}
\end{figure}

Our argument here is a generalization of Fried's argument in \cite{Fri83} to pseudo-Anosov flows. We remark that there is an alternative argument to construct $q$ in \cite{Bru95} without using symbolic dynamics.We can further assume that $p_i$ are orientation preserving paths on the stable and unstable leaves they lie on, since the set of orientation preserving periodic orbits is dense as well.

Let $\Sigma^\circ$ be $\Sigma$ without its boundary components, and let $\overline{\Sigma}$ be $\Sigma$ with all its boundary components collapsed to points. The first return map on $\Sigma$ restricts to a pseudo-Anosov homeomorphism $h^\circ$ on $\Sigma^\circ$, with the intersections of $\Lambda^s, \Lambda^u$ with $\Sigma^\circ$ acting as the stable and unstable foliations. This in turns induces a pseudo-Anosov homeomorphism $\overline{h}$ on $\overline{\Sigma}$. (Note that a map obtained this way on a general Birkhoff section may have 1-pronged singularities, but this is eliminated by requiring that the components of $\partial \Sigma$ lie on orientable stable and unstable leaves.) By classical results, there is a smooth structure on $\overline{\Sigma}$ such that $\overline{h}$ is smooth away from the singular points. Hence the mapping torus of $(\bar{\Sigma}, \bar{h})$ carries a smooth structure so that the suspension flow is smooth away from the singular points. Moreover, for each closed orbit of the suspension flow, there is a neighborhood diffeomorphic to a neighborhood of the pseudo-hyperbolic orbit of some $\Phi_{n,k,\lambda}$ in \Cref{defn:phorbit}. In particular this is true for the closed orbits which are suspensions of points of $\bar{\Sigma} \backslash \Sigma^\circ$. We call these closed orbits the surgery orbits for ease of notation.

Meanwhile, note that $N \backslash \partial \Sigma$ with the restricted $\phi^t$ flow is $C^0$-orbit equivalent to the suspension flow on the mapping torus $(\Sigma^\circ, h^\circ)$. In particular, there is a vector field on $N \backslash \partial \Sigma$ (with respect to some smooth structure) whose trajectories are equal to the flow lines of $\phi^t$. Using the orbit equivalence, we can transfer neighborhoods of the surgery orbits, minus the surgery orbits themselves, to neighborhoods of the ends of $N \backslash \partial \Sigma$, then fill back in the components of $\partial \Sigma$ to get neighborhoods of $\partial \Sigma$ in $N$. It remains to perform Dehn surgery on these neighborhoods, which we will do by cutting out small special shaped sub-neighborhoods and gluing similarly shaped regions back in, and arguing that the result is orbit equivalent to $\phi^t$ on the original manifold $N$. The first step is a restatement of the arguments in \cite{Goo83}, while the second step relies on \cite[Theorem 3.9]{Sha21}. 

Within each such neighborhood in $N$, locate and excise a small \textit{cross-shaped neighborhood} of $\partial \Sigma$ using the orbit equivalence map between $N \backslash \partial \Sigma$ and the mapping torus of $(\Sigma^\circ, h^\circ)$. Here we provide an illustration of these cross-shaped neighborhoods in \Cref{fig:crossnbd} and refer to \cite[Section 5.2]{Sha21} for their precise definition. The key feature of these neighborhoods is that their boundaries have alternating annulus faces, the flow transverse to half of the faces and tangent to the remaining faces. 

\begin{figure}
    \centering
    \resizebox{!}{8cm}{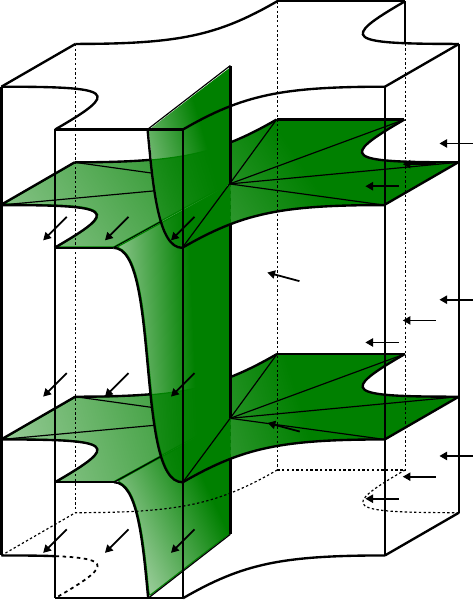}
    \caption{A cross-shaped neighborhood and the portion of the Birkhoff section constructed in it.}
    \label{fig:crossnbd}
\end{figure}

Then we glue some cross-shaped neighborhoods back in by matching up the vector fields at the boundary, so that the glued vector field determines a smooth flow (away from the singular orbits). See \cite[Section 5.3]{Sha21} for an explicit description of this gluing map. The key observation that makes this work is that there is a freedom in this gluing: one can do Dehn twists suitably along annulus faces which the flow is transverse to, and still have the vector fields match up. So far there is no difference between the Anosov case and the general pseudo-Anosov case. 

Then one uses a cone field argument to find the stable and unstable line bundles $E^s, E^u$ and show that the glued up flow is pseudo-Anosov, provided that the neighborhood we excised is small enough. This is done in the Anosov case in \cite[Section 5.4]{Sha21}, and is in fact more simple in the pseudo-Anosov case, since one only needs to construct $E^s, E^u$ away from the singular orbits. On the singular orbits, the definition only requires an orbit equivalence between a neighborhood in $N$ and a neighborhood in $\Phi_{n,k,\lambda}$, and these are provided by the orbit equivalence between $N \backslash \partial \Sigma$ and the mapping torus of $(\Sigma^\circ, h^\circ)$, since by construction the orbits of $\partial \Sigma$ are nonsingular. As remarked above, this method of constructing (pseudo-)Anosov flows on Dehn surgeries appeared back in \cite{Goo83}. 

Finally, we construct a Birkhoff section on this glued flow by taking the portion of the original Birkhoff section $\Sigma$ outside of the excised cross-shaped neighborhood, and extending it inside the glued cross-shaped neighborhood by taking a union of straight lines (with respect to the Euclidean metric on a canonical set of coordinates) towards the pseudo-hyperbolic orbit. Call this surface $\Sigma'$. We illustrate the portion of $\Sigma'$ within each cross-shaped neighborhood in \Cref{fig:crossnbd} and refer to \cite[Section 5.5]{Sha21} for precise formulas, where it is also shown that this indeed defines a Birkhoff section, and that the return map $h'$ is pseudo-Anosov. Moreover, $h$ and $h'$ induce the same automorphism on $\pi_1(\Sigma) \cong \pi_1(\Sigma')$, preserving the peripheral subgroups. Hence using \cite[Theorem 3.9]{Sha21}, which applies to pseudo-Anosov flows as well, as stated there, we get a $C^0$-orbit equivalence between $\phi^t$ on $N$ and the smooth pseudo-Anosov glued flow. 

\subsection{Properties of the construction} \label{subsec:prop}

We point out two properties of the pseudo-Anosov flows we constructed. We will phrase our proofs in terms of the initial topological pseudo-Anosov flow we constructed, but thanks to \Cref{thm:top2smooth}, one can just take an orbit equivalence map to prove the properties for the smooth pseudo-Anosov flow as well.

\begin{prop} \label{prop:unstablelamination}
The unstable lamination of the pseudo-Anosov flow, obtained by blowing air into the leaves of the unstable foliation that contain the core orbits, is carried by $B \subset M(s)$.
\end{prop}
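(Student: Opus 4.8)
The plan is to trace through the construction of the flow and observe that at every stage the unstable foliation we carried along — first on $N(\Phi_{red})$, then (after collapsing across $N(B)\backslash\backslash N(\Phi_{red})$) on $N(B)$, then (after collapsing across $M(s)\backslash\backslash N(B)$) on $M(s)$ — is, by construction, built from the unstable leaves $\{s=s_0\lambda^{t-1}\}$ of the flow boxes $Z_e$, all of which sit inside $B$ under the embedding $\Phi_{red}\subset B$ and its thickening $N(\Phi_{red})\subset N(B)$. So the first step is to recall that $N(\Phi_{red})$ is saturated with respect to the $I$-fibering of $N(B)$, and that collapsing along that $I$-fibering carries $N(\Phi_{red})$ onto $N(B)$, then collapsing $N(B)$ onto $B$; hence the unstable foliation on $N(\Phi_{red})$ — whose leaves are unions of the unstable $2$-cells of the $Z_e$'s together with their branchings at vertices of $\Phi_{red}$ — descends to a branched $2$-dimensional foliation on $N(B)$ and then projects to a branched surface in $B$. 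The key point is that this projected branched surface is exactly (isotopic to) $B$ itself: the $2$-cells of the unstable foliation on $N(\Phi_{red})$ run parallel to $B$ inside each tetrahedron (they are the flow boxes replacing edges of $\Phi_{red}$, which sit on the sectors of $B$), and after the two collapses they sweep out all of $B$, since every sector of $B$ is dual to some edge of $\Delta$, i.e. some vertex of $\Phi$, and the ones that got deleted (infinitesimal cycles of walls) correspond precisely to sectors that the quadrilateral tilings of walls fill in when one passes to the unstable branched surface.

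Second, I would make precise the statement "the unstable lamination is obtained by blowing air into the singular leaves of $\Lambda^u$." Recall from \Cref{defn:pAflow} that $\Lambda^u$ is a (possibly singular) $2$-dimensional foliation; the associated unstable lamination $L^u$ is the lamination one gets by splitting $\Lambda^u$ open along its singular leaves (the leaves through the core orbits $c_i$), turning each $n_i$-pronged singular leaf into $n_i$ boundary leaves of $L^u$. The claim is then: $L^u$ is carried by the branched surface $B$ sitting inside $M(s)$. Given Step 1, this is essentially the assertion that $\Lambda^u$ on $M(s)$, which we built as a quotient of the unstable foliation on $N(\Phi_{red})$, is carried by $B$ in the branched-surface sense — each leaf of $L^u$ runs monotonically through the sectors of $B$ respecting the branching, because the unstable $2$-cells of the $Z_e$ branch only at vertices of $\Phi_{red}$, which lie at top vertices of sectors of $B$, and the branch directions agree with those of $B$ (this was arranged when we required the framing on $Z_e$ to match that of $e$ induced from $B$, and when we chose the $I$-fiberings so that tongues of $B\backslash\backslash\Phi_{red}$ attach along the branch locus of $B$).

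Third, I would verify the behavior near the singular orbits $c_i$: by construction the core orbit of a torus end is the image of the $S^1$ leaves on the annulus faces $A^{(j)}$ of $\partial T$, and near it the unstable foliation is the pullback of the singular unstable foliation of $\Phi_{n_i,k_i,\lambda}$ under the conjugation $f_i$ built via \Cref{lemma:expandextend}; one checks that splitting this open along the singular leaf produces exactly the $|\langle s_i,l_i\rangle|$ boundary leaves spiraling out of $c_i$, and that these lie on the portion of $B$ near the torus end — indeed \Cref{prop:complbranchsurf} identifies $M\backslash\backslash B$ near that end as a product whose cusp circles are the ladderpole curve, which is precisely where these spiraling unstable leaves limit. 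So the carrying is compatible with the singular structure.

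The main obstacle, I expect, is Step 1: pinning down precisely that the branched surface swept out by the unstable $2$-cells after both collapses is isotopic to $B$ — as opposed to some sub-branched-surface or a branched surface only immersed in $B$ — and in particular handling the sectors dual to edges that were deleted in passing from $\Phi$ to $\Phi_{red}$. This requires going back to \Cref{subsec:wall} and \Cref{lemma:redflowgraphcompl}: in a wall, the deleted infinitesimal cycles correspond to interior vertical loops of a quadrilateral tiling, but the complementary region $B\backslash\backslash\Phi_{red}$ across those loops is one of the annulus-with-tongues pieces of \Cref{lemma:redflowgraphcompl}, so when we collapse $N(B)\backslash\backslash N(\Phi_{red})$ the unstable $2$-cells of the surviving $Z_e$'s are dragged across those pieces and do cover the corresponding sectors of $B$. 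Making this dragging argument rigorous — that the image genuinely fills $B$ and is embedded as a branched surface carrying a lamination, rather than merely immersed — is the technical heart of the proof; everything else is a matter of assembling the foliation-quotient bookkeeping already set up in \Cref{subsec:stableglue,subsec:unstableglue}.
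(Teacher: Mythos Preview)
Your proposal contains the right core observation --- that the unstable leaves project to $B$ via the $I$-bundle structure on $N(B)$ --- but wraps it in a great deal of unnecessary machinery and, in the process, misidentifies where the difficulty lies. The paper's proof is two sentences: the unstable lamination can equally well be obtained by taking the unstable foliation already present on $N(B)$ and splitting it open along its non-manifold leaves; since $N(B)$ is by definition an $I$-fibered neighborhood of $B$, the projection $N(B)\to B$ then carries the lamination. There is no need to trace back through $N(\Phi_{red})$, no need to analyze walls or the sectors dual to deleted edges, and no need for a separate check near the singular orbits.

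The ``main obstacle'' you flag --- whether the image of the unstable $2$-cells genuinely \emph{fills} all of $B$, including sectors dual to vertices removed in passing from $\Phi$ to $\Phi_{red}$ --- is not an obstacle at all. ``Carried by $B$'' only requires that the lamination embed in $N(B)$ transverse to the $I$-fibers, so that each leaf maps to $B$ by a local immersion respecting the branching; it does not require the image to surject onto $B$. So the entire discussion of dragging $2$-cells across annuli-with-tongues is beside the point.

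There is also a minor conflation in your Step~1: you speak of ``collapsing $N(B)$ onto $B$'' as though the $I$-bundle projection $N(B)\to B$ were one of the collapse maps used to construct the flow. It is not --- the collapse producing $M(s)$ is across $M(s)\backslash\backslash N(B)$ and only identifies points on $\partial N(B)$. The paper's argument exploits this: splitting the unstable foliation on $M(s)$ along its singular leaves is the same as splitting the unstable foliation on $N(B)$ along its non-manifold leaves (the non-manifold branching occurs exactly on $\partial N(B)$, which the final collapse glues up into the singular leaves), so one may as well work in $N(B)$ and use the built-in projection to $B$.
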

\begin{proof}
The unstable lamination can also be obtained by taking the unstable foliation on $N(B)$ and blowing air into the non-manifold leaves. There is a projection $N(B) \to B$ by definition, and this projects the lamination to $B$.
\end{proof}

\begin{defn}
Suppose a pseudo-Anosov flow $\phi$ has a Markov partition as in \Cref{defn:pAflow}. Define a directed graph $G$ by letting the set of vertices be the flow boxes, and putting an edge from $(I^{(j)}_s \times I^{(j)}_u \times [0,1]_t)$ to $(I^{(i)}_s \times I^{(i)}_u \times [0,1]_t)$ for every $J^{(ij,k)}_s$. 

Notice that $G$ has a natural embedding in $N$ by placing the vertices in the interior of the corresponding flow box and placing the edges through the corresponding intersections $J^{(ij,k)}_s \times I^{(i)}_u \times \{1\}$. $G$ together with this embedding in $N$ is said to \textit{encode the Markov partition}. 
\end{defn}

\begin{prop} \label{prop:Markovpart}
The pseudo-Anosov flow constructed in \Cref{thm:vtpAflow} admits a Markov partition encoded by the reduced flow graph $\Phi_{\red}$.
\end{prop}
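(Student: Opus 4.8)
The plan is to realize the required Markov partition as a coarsening of the partition by the flow boxes $\{Z_e\}_{e\in E(\Phi_{red})}$ built in the proof of \Cref{thm:vtpAflow}, grouping the $Z_e$ according to the vertices of $\Phi_{red}$, and then to identify the resulting encoding graph, together with its embedding, with $\Phi_{red}$ itself.

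For each vertex $v$ of $\Phi_{red}$ I would set $Y_v$ to be the region swept out by flowing the transverse rectangle $R_v$ forward until its first return to $\bigcup_u R_u$; equivalently $Y_v=\bigcup_e Z_e$ as $e$ ranges over the edges of $\Phi_{red}$ exiting $v$. First I would check that $Y_v$ is an honest embedded nonsingular flow box. This is where the Perron--Frobenius eigenvector relations enter: $Aw=\lambda w$ says that the $s$-widths of the bottom faces of the $Z_e$ entering $v$ sum to the common $s$-width of the top faces of the $Z_e$ exiting $v$, and dually $A^{T}w'=\lambda w'$ for the $u$-widths, so $R_v$ is a genuine rectangle that is tiled both by the top faces of the outgoing $Z_e$ (horizontal strips) and, transversally, by the bottom faces of the incoming $Z_e$ (vertical strips). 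Flowing $R_v$ forward through the outgoing $Z_e$ then sweeps out $Y_v\cong R_v\times[0,1]$ with the product oriented foliation. Since the $Z_e$ already form a Markov partition by the construction in \Cref{thm:vtpAflow}, they cover $M(s)$ with pairwise disjoint interiors, hence so do the $Y_v$; and no flow line passes between two distinct $Z_e$ lying in the same $Y_v$, so $\{Y_v\}_{v\in V(\Phi_{red})}$ is again a (coarser) Markov partition, the core orbits lying on its boundary faces.

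Next I would read off the encoding graph $G$. A flow line leaving $Y_v$ through its exit face next enters $Y_w$ through its entrance face $R_w$, and the overlap of these two faces is exactly the union of the bottom faces of the $Z_e$ with $e$ an edge of $\Phi_{red}$ from $v$ to $w$; each such face is a full vertical strip of $R_w$ and a full horizontal slice of the exit face of $Y_v$, which is precisely the shape of $(I^{(i)}_s\times I^{(i)}_u\times\{1\})\cap(I^{(j)}_s\times I^{(j)}_u\times\{0\})$ demanded by \Cref{defn:pAflow}. Hence $G$ has one vertex $Y_v$ per vertex $v$ of $\Phi_{red}$ and one edge $Y_v\to Y_w$ per edge $v\to w$ of $\Phi_{red}$ (counted with multiplicity), and the orientations agree because the flow runs through the $Z_e$ in the direction of the edge-orientation of $\Phi_{red}$ --- the same convention used in the transitivity argument of \Cref{thm:vtpAflow} and in the definition of the orientation of $G$.

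Finally I would match the embeddings, and this is the one step I expect to need genuine care rather than bookkeeping. After the two collapses, $\bigcup_v Y_v$ is a regular neighborhood in $M(s)$ of the image of $\Phi_{red}$: $Y_v$ is a neighborhood of the image of the vertex $v$ together with its outgoing half-edges, and the overlap piece cutting the transition $Y_v\to Y_w$ is met transversally by the core flow line of the corresponding $Z_e$, which is exactly where the edge $e$ of $\Phi_{red}$ runs. Placing the $G$-vertex for $Y_v$ at the image of $v$ and the $G$-edge for each such piece along the core of $Z_e$ then exhibits $G$, with its embedding, as $\Phi_{red}$ up to isotopy in $M(s)$. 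The subtle point is that the thickening $N(\Phi_{red})$ spreads the single vertex $v$ over the rectangle $R_v$, whose horizontal- and vertical-strip centers --- the endpoints of the outgoing and incoming edges of $\Phi_{red}$ --- are genuinely distinct points; reconciling this with $\Phi_{red}$ being an honest graph uses that $N(\Phi_{red})$ is a regular neighborhood of $\Phi_{red}$, so that this spread-out configuration is isotopic, relative to the oriented $1$-dimensional foliation, to the embedded graph $\Phi_{red}$. This would yield $G=\Phi_{red}$, completing the argument.
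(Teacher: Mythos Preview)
There is a genuine gap in the step where you assert that $Y_v$ is an honest flow box. In $N(\Phi_{red})$ the union $\bigcup_{e\text{ exits }v} Z_e$ is indeed a product $R_v\times[0,1]$, but this does not survive the collapse of $N(B)\backslash\backslash N(\Phi_{red})$. Adjacent outgoing boxes $Z_{e_1}$, $Z_{e_2}$ (with $e_i\colon v\to w_i$) are glued along their common stable face via the $I$-fibering constructed in \Cref{subsec:stableglue}, and that fibering only preserves the \emph{leaves} of the oriented $1$-dimensional foliation, not its $t$-parametrisation; this is exactly the content of \Cref{rmk:kopell}. Consequently the bottom face of $Z_{e_1}$ (its $t=0$ level, lying in $R_{w_1}$) and the bottom face of $Z_{e_2}$ (in $R_{w_2}$) meet the common boundary orbit in $M(s)$ at \emph{different} points, so the first-return set from $R_v$ has a jump along the line $u=u_0$ separating $Z_{e_1}$ from $Z_{e_2}$. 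Your two descriptions of $Y_v$ (first return versus $\bigcup_e Z_e$) are not even equal in $M(s)$, and neither is a flow box.

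The paper confronts precisely this obstruction: it observes that the naive region $Z'_v$ is only homeomorphic to $\{(s,u,t):t\in[f(s,u),1]\}$ with $f$ discontinuous along finitely many lines $u=u_0$, and then performs an inductive modification of the transversals $R_v$ to remove those jumps one at a time. The key point making this work is that along each such jump the two sides already agree at the point where the branch locus of $N(B)$ crosses, so one can slide one $R_{v_i}$ to meet the other without creating new discontinuities. Your outline skips this modification entirely, and the eigenvector bookkeeping you invoke controls only the \emph{widths} of the rectangles, not the time-matching across the stable gluings.
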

\begin{proof}
The flow boxes $Z_e$ form a Markov partition by construction, but notice that the graph that encodes this Markov partition is not $\Phi_{\red}$. Indeed, the flow boxes $Z_e$ correspond to edges, not vertices, of $\Phi_{\red}$, hence the graph encoding this Markov partition is instead a `dual' of $\Phi_{\red}$.

What we will do is extract from this an alternate Markov partition that is encoded by $\Phi_{\red}$. For each vertex $v$ of $\Phi_{\red}$, recall the rectangle $R_v$ considered in \Cref{subsec:stableglue}. Let $Z'_v$ be the closure of the union of trajectories in $M(s)$ that start in the image of the interior of $R_v$ and end when it meets the image of the interior of another (possibly the same) $R_{v'}$. These $Z'_v$ cover $M(s)$ with disjoint interiors, but they might not be flow boxes. Instead, they are in general homeomorphic to $\{(s,u,t): s \in I_s, u \in I_u, t \in [f(s,u),1]\}$ with a function $f$ with discontinuities along finitely many lines $u=u_0$. Each of these intervals of discontinuity correspond to an adjacent pair of flow boxes leaving $R_v$.

Consider the total number of intervals of discontinuity at the bottom of all the $Z'_v$. If this number is $0$, then we have a genuine flow box decomposition which gives a Markov partition encoded by $\Phi_{\red}$. 

If this number is positive, we claim that we can modify the $Z'_v$ to reduce it. Fix an interval of discontinuity $J$ of $Z'_v$. Suppose the two sides to it on the bottom of the $Z'_v$ lie on the image of $R_{v_1}$ and $R_{v_2}$. We can isotope $R_{v_1}$ near $J$, moving each point along a trajectory of the flow, so that $R_{v_1}$ matches up with $R_{v_2}$ along $J$, eliminating the discontinuity. See \Cref{fig:markovpartarg}. 

One might worry that this produces new intervals of discontinuity, but we argue that this will not happen. Using the same notation as above, since we only modify $R_{v_1}$ near $J$, the operation only possibly creates new intervals of discontinuity that lie on the same sides of $R_{v_1}$ and $R_{v_2}$ as $J$ and share an endpoint with $J$. Suppose $K$ is such an interval. We illustrate one example of such a $K$ in \Cref{fig:markovpartarg}. $J \cap K$ is a point $x$ that lies on the image of a component $c$ of the branch locus of $N(B)$ that passes through $R_{v_1}$ and $R_{v_2}$. In particular, $R_{v_1}$ and $R_{v_2}$ match up along $x$ already. (In \Cref{fig:markovpartarg}, we illustrate $c$ as the yellow line; compare with \Cref{fig:stableglue3} top.) So we can in fact isotope $R_{v_1}$ while fixing $K$, ensuring that we will not create a new interval of discontinuity at $K$. Doing this inductively, we can reduce the number of intervals of discontinuity to $0$.
\end{proof}

\begin{figure}
    \centering
    \fontsize{12pt}{12pt}\selectfont
    \resizebox{!}{8cm}{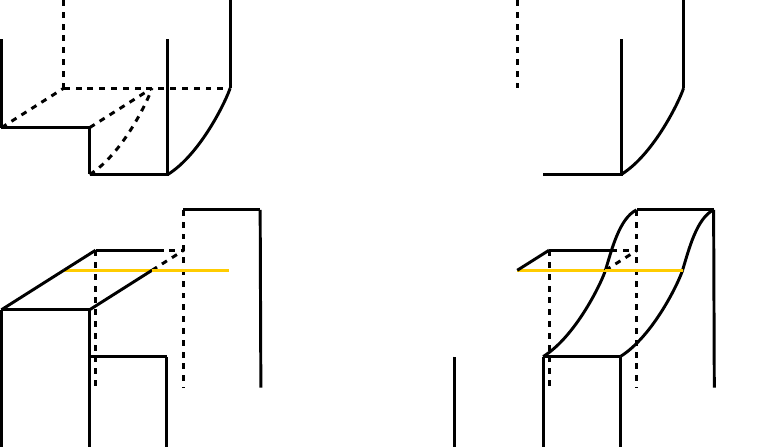}
    \caption{Modifying $R_v$ to eliminate the intervals of discontinuity, so that we obtain a decomposition by flow boxes.}
    \label{fig:markovpartarg}
\end{figure}

\begin{cor} \label{cor:symbdyn}
Given a cycle $l$ of $\Phi$ in $M(s)$, there is a periodic orbit $c$ of the flow such that $l$ is homotopic to $c$. Conversely, given a periodic orbit $c$ of the flow, there exists a cycle $l$ of $\Phi$ such that $l$ is homotopic to some multiple $c^k$, $k \geq 1$.
\end{cor}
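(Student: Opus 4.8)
The plan is to read off the correspondence directly from the Markov partition encoded by $\Phi_{red}$ (\Cref{prop:Markovpart}), using \Cref{lemma:flowgraphsorbits} to move between $\Phi$ and $\Phi_{red}$. Recall that in that Markov partition the flow boxes are the $Z'_v$ from the proof of \Cref{prop:Markovpart}, indexed by vertices of $\Phi_{red}$, and the edges of $\Phi_{red}$ record the transitions between them.

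For the first assertion I would first treat a cycle $l = (v_1 \overset{e_1}{\to} v_2 \overset{e_2}{\to} \cdots \overset{e_n}{\to} v_1)$ of $\Phi_{red}$. It determines a bi-infinite periodic sequence of flow boxes, and repeating the nested-intersection argument from the transitivity proof in \Cref{subsec:pAproof} — intersecting, over all numbers of periods, the sub-flow-boxes of $Z'_{v_1}$ whose forward and backward orbits follow this periodic sequence — produces a periodic orbit $\gamma$ running once through $Z'_{v_1}, \dots, Z'_{v_n}$ in cyclic order per period. Both $\gamma$ and $l$ lie in the union of the visited flow boxes, which deformation retracts onto a circle carrying the cyclic edge path $l$, so $l \simeq \gamma$. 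For a general cycle $l$ of $\Phi$, \Cref{lemma:flowgraphsorbits} gives that $l$, or in the single exceptional case $l^2$, is homotopic in $M \subseteq M(s)$ to a cycle of $\Phi_{red}$ and hence to a periodic orbit $\gamma$. In the exceptional case $l$ is the central infinitesimal cycle of a twisted even-width wall and $[l]^2 = [\gamma]$; since $[l]$ then commutes with $[\gamma]$ it preserves the (single) point of the orbit space $\mathcal{O}$ fixed by $[\gamma]$, so by the standard structure of $\mathcal{O}$ the element $[l]$ is represented by a closed orbit of $\phi$ (the corresponding core orbit of the filling). This completes the first assertion.

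For the converse, let $c$ be a periodic orbit. If $c$ avoids the singular core orbits it meets the interiors of the flow boxes $Z'_v$; following $c$ through one period records a cyclic sequence of flow boxes together with, at each face crossing, a definite edge of $\Phi_{red}$, and hence a cycle $l$ of $\Phi_{red} \subseteq \Phi$ with $c \simeq l$ by the same deformation-retraction of the union of visited boxes (so $k=1$). If $c$ is a core orbit, push it off itself into one of its prongs; after going $m$ times around $c$, where $m$ is the order of the rotational part of the local model at $c$, the pushed-off loop closes up inside the flow-box region and is freely homotopic to $c^m$, and extracting a cycle $l$ from it as before gives $c^m \simeq l$.

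The parts requiring care — and where I expect the real work to be — are: (i) verifying the two homotopies when the cyclic sequence of flow boxes repeats a box, which needs that the union of the visited $Z'_v$, glued along the rectangles $R_v$, still deformation retracts onto a loop carrying the edge path, together with the routine but necessary fact that the collapsing maps used to build $\phi$ are homotopy equivalences of $M(s)$; and (ii) the exceptional twisted-even-width wall and the core-orbit cases, where one must invoke the local structure of $\phi$ near a core orbit and the precise statement of \Cref{lemma:flowgraphsorbits} to pin down which power $k$ occurs. None of this needs ideas beyond the constructions already in place.
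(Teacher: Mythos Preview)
Your approach is essentially the paper's: use the Markov partition encoded by $\Phi_{red}$ (\Cref{prop:Markovpart}) together with \Cref{lemma:flowgraphsorbits}, construct the periodic orbit from a cycle via the nested-intersection argument, and read off a cycle from a periodic orbit via its itinerary of flow boxes. Where you diverge from the paper is in the two edge cases, and there the paper's route is cleaner and yours has gaps.

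For the twisted even-width wall case, the paper argues geometrically: the boundary cycle $l'$ determines a circular leaf $c'$ on $\partial^s N(\Phi_{red})$, and under the collapse $N(\Phi_{red}) \to N(B)$ this $c'$ \emph{double covers} its image $c''$, so $c'' \simeq l$; the image of $c''$ in $M(s)$ is the desired periodic orbit. Your orbit-space argument instead asserts that $[\gamma]$ fixes a \emph{single} point of $\mathcal{O}$, but this is not justified: the flow is only without perfect fits relative to $\{c_i\}$, not absolutely, so one cannot invoke the usual uniqueness of a periodic orbit in a free homotopy class, and $[l]$ could a priori swap two fixed points of $[\gamma] = [l]^2$ without fixing either. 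Your parenthetical identifying the resulting orbit as ``the corresponding core orbit of the filling'' is also incorrect; the orbit produced lives in the image of $N(B)$ and has nothing to do with the Dehn-filling cores.

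For the converse, the paper works uniformly by taking the preimage of $c$ in $N(\Phi_{red})$, which is a disjoint union of circular leaves possibly multiply covering $c$; any one of these gives the cycle, and the multiplicity gives $k$. This handles core orbits and orbits lying on flow-box boundaries without a case split. Your treatment of the core-orbit case is not quite an argument: the pushed-off loop is not a flow line, so ``extracting a cycle from it as before'' does not apply, and you also need to check that non-core periodic orbits actually meet interiors of the $Z'_v$ (they can lie on the $\partial^s$/$\partial^u$ faces of the Markov boxes). Pulling back to $N(\Phi_{red})$ sidesteps all of this.
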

\begin{proof}
This is mostly a straightforward consequence of \Cref{prop:Markovpart} and \Cref{lemma:flowgraphsorbits}.

Given a cycle $l$ of $\Phi$, we first assume that there is a homotopic cycle of $\Phi_{\red}$, which we also write as $l$. Then we can proceed as in the proof of transitivity to construct orbits of the flow passing through the sequence of flow boxes corresponding to the sequence of vertices of $\Phi_{\red}$ passed through by $l'$. A similar argument as in that proof shows that such an orbit will be unique, hence the orbit $c$ will be periodic, since the sequence of flow boxes is periodic. A homotopy between $l$ and $c$ can now be constructed using a straight line homotopy within each flow box.

We caution that $c$ may not be a primitive closed orbit. Indeed, the above construction in fact gives a $S^1$ leaf $c'$ of the singular 1-dimensional foliation on $N(\Phi_{\red})$. But when we quotient $N(\Phi_{\red})$ to $M(s)$, $c'$ may cover an orbit multiple times.

If $l$ is not homotopic to a cycle of $\Phi_{\red}$, then by the proof of \Cref{lemma:flowgraphsorbits}, $l$ must be an infinitesimal cycle within a twisted wall. Let $l'$ be the boundary cycle of that wall. Applying the argument above for this $l'$, we find a circular leaf $c'$ in $N(\Phi_{\red})$ lying on the stable boundary $\partial_s N(\Phi_{\red})$ which is homotopic to $l'$ in $M(s)$. When we quotient $N(\Phi_{\red})$ to $N(B)$, $c'$ double covers its image $c''$, which is hence homotopic to $l$. The image of $c''$ after quotienting to $M(s)$ will be the desired periodic orbit $c$ in this case. Again, we remark that $c$ may not be primitive.

Conversely, given a periodic orbit $c$ of the flow on $M(s)$, consider its preimage in $N(\Phi_{\red})$. This will be a disjoint union of circular leaves which cover $c$ possibly multiple times. Pick one of these circles $c'$. The periodic sequence of flow boxes which it meets will determine a cycle $l$ of $\Phi_{\red} \subset \Phi$. A homotopy between $l$ and $c$ can now be constructed using a straight line homotopy within each box as before.
\end{proof}

Combining \cite[Theorem A]{Lan20} and \cite[Proposition 5.7]{LMT20}, it is known that when $|\langle s,l \rangle| \geq 3$, the homology classes of the cycles of $\Phi$ in $M(s)$ span a cone dual to a face of the Thurston unit ball, namely the face determined by the negative of the Euler class of the veering triangulation $$-e(\Delta)=\sum \frac{1}{2}(\text{\# prongs}(\gamma)-2) \cdot [\gamma] \in H_1(M(s))$$ Hence \Cref{cor:symbdyn} implies that the homology classes of the periodic orbits of the pseudo-Anosov flow in \Cref{thm:vtpAflow} span the same dual cone. 

\begin{rmk} \label{rmk:LMTorbits}
\Cref{cor:symbdyn} and its proof also allows one to count the number of periodic orbits of the pseudo-Anosov flow using $\Phi$. The precise statements one can get are essentially the same as those shown in \cite[Theorem 6.1]{LMT21}, but again we remind the reader that our setting in this paper is opposite to that in \cite{LMT21}.

We elaborate on this a little more. The proof of \Cref{cor:symbdyn} shows that for each cycle $l$ of $\Phi$, there is a unique primitive periodic orbit $c$ of the flow homotopic to $l$, unless if $l$ determines a circular leaf that lies on the boundary of $N(\Phi_{\red})$ and multiple covers a periodic orbit of the flow under the quotient maps. If such a circular leaf lies on $\partial^s N(\Phi_{\red}) \backslash \backslash \partial^u N(\Phi_{\red})$, then $l$ lies on the boundary of a component of $B \backslash \backslash \Phi_{\red}$, and if the circular leaf multiple covers a periodic orbit, the component must be a M\"obius band with tongues. If such a circular leaf lies on $\partial^u N(\Phi_{\red})$, then $l$ can be homotoped into a torus end of $M$.

Conversely, the proofs of \Cref{cor:symbdyn} and \Cref{lemma:flowgraphsorbits} show that for each primitive periodic orbit $c$ of the flow, there is a unique cycle $l$ of $\Phi$ homotopic to $c$, unless the preimage of $c$ in $N(\Phi_{\red})$ multiple covers $c$ or if $l$ is homotopic to a boundary cycle of a wall. The former can only happen when if the preimage lies in the boundary of $N(\Phi_{\red})$. If the preimage lies in $\partial^s N(\Phi_{\red}) \backslash \backslash \partial^u N(\Phi_{\red})$, then $c$ is homotopic to the core of a component of $B \backslash \backslash \Phi_{\red}$ which is a M\"obius band with tongues as above. If the preimage lies in $\partial^u N(\Phi_{\red})$ then $c$ is a core orbit. In the latter case, $l$ is homotopic to an AB cycle, by the discussion in \Cref{rmk:LMTABwalls}.
\end{rmk}

\section{Discussion and further questions} \label{sec:questions}

We conclude with some questions coming out of this paper.

In \Cref{sec:infcomp}, we analyzed \emph{how} the flow graph of a veering triangulation can fail to be strongly connected. An equally interesting question is \emph{when} does the flow graph fail to be strongly connected. For example, for layered veering triangulations, one could ask if there is a topological criterion on the monodromy that determines whether the flow graph is strongly connected. Another interesting question is if having a strongly connected flow graph is a generic property among all veering triangulations.

We already pointed out in \Cref{sec:finiteness} that in our argument for \Cref{thm:boundveertet}, there is a lot of room for improvement for the bounds we used. A more interesting approach to try to obtain better bounds overall is to study the flow graphs themselves. It is shown in \cite{McM15} how to recover the dilatation of the pseudo-Anosov monodromy from the Teichm\"uller polynomial of the associated fibered face. The Teichm\"uller polynomial is in turn related to the veering polynomial of the layered veering triangulation, defined in \cite{LMT20} as the inclusion of the Perron polynomial of the flow graph in the 3-manifold. Now, the combinatorial rigidity of veering triangulations might impose restrictions on the graph isomorphism type of flow graphs, which might in turn have consequences for the behavior of their Perron polynomials, possibly giving some bounds on dilatations. We remark that Parlak has computed some data for veering polynomials of veering triangulations in the census (see \cite{Par21}). This data might be helpful for extrapolating patterns for the approach above.

One motivation for improving the bound is to use \Cref{thm:boundveertet} to solve the minimal dilatation problem by computation. Starting with an upper bound, we can use \Cref{thm:boundveertet} to reduce the search to a finite list. However, it has been conjectured that one cannot do better than $P = \left( \frac{3+\sqrt{5}}{2} \right)^2$ (see \cite{Hir10}, where examples that asymptotically attain this bound are also shown). Plugging this into our \Cref{thm:boundveertet}, we have to look at all 3-manifolds triangulated by at most $9.7 \times 10^{15}$ veering tetrahedra. In comparison, the veering triangulation census only covers triangulations up to 16 veering tetrahedra for now, so huge advancements would have to be made before this approach is computationally feasible.

Finally, there are plenty of natural questions one could ask about our construction of the pseudo-Anosov flow in \Cref{sec:pAflow}.

We have been using the unstable branched surface $B$ throughout this paper, but there is also a stable branched surface associated to a veering triangulation, which can be defined as the unstable branched surface associated to the veering triangulation with reversed coorientation on the faces. It is natural to ask whether the stable lamination of our pseudo-Anosov flow is carried by the stable branched surface. 

Also, one can repeat our entire construction with the stable branched surface replacing the unstable branched surface. Then one can ask whether the pseudo-Anosov flows obtained using the stable and unstable branched surfaces are $C^0$-orbit equivalent. A related development is that Parlak (\cite{Par21}) has found veering triangulations whose veering polynomial is different from that of the veering triangulation with reversed coorientation on the faces.

We mentioned in the introduction that the construction of a pseudo-Anosov flow without perfect fits from a veering triangulation was first done by Schleimer and Segerman. It is an interesting question to ask whether the flow constructed by our approach ends up being equivalent to the one constructed by Schleimer and Segerman.

One could also ask if our construction is inverse, in either direction, to the construction by Agol and Gu\'eritaud, which produces a veering triangulation out of a pseudo-Anosov flow without perfect fits. More precisely, if one starts with a pseudo-Anosov flow without perfect fits relative to orbits $\{c_i\}$ on a closed orientable 3-manifold $N$ and constructs a veering triangulation on $N$ with $\{c_i\}$ drilled out, then constructs a pseudo-Anosov flow by filling in slopes that recover $N$, is the final pseudo-Anosov flow $C^0$-orbit equivalent to the initial pseudo-Anosov flow? Conversely, if one starts with a veering triangulation and constructs a pseudo-Anosov flow by filling along slopes $s$ with $| \langle s,l \rangle | \geq 2$, then constructs a veering triangulation with the core orbits drilled out, is the final veering triangulation isomorphic to the initial veering triangulation? If one could show that our construction is equivalent to that of Schleimer and Segerman, then the answer to both questions would be `yes', since as mentioned in the introduction, this is a feature of their construction.

\bibliographystyle{alpha}
\bibliography{bib.bib}

\end{document}